\documentclass[12pt, a4paper, english]{article}		
\usepackage[T1]{fontenc}
\usepackage{babel}		
\usepackage{fancyhdr}		
\usepackage[utf8]{inputenc}
\usepackage{amsmath}  
\usepackage{amsthm}		
\usepackage{amsfonts}		
\usepackage{amssymb} 
\usepackage{enumerate} 
\usepackage{graphicx} 
\usepackage[all]{xy}
\usepackage{url}
\frenchspacing

\newtheorem{teorema}{Theorem}
\newtheorem{lema}[teorema]{Lemma}
\newtheorem{proposicio}[teorema]{Proposition}
\newtheorem{corolari}[teorema]{Corollary}
\newtheorem{athm}{Theorem}

\theoremstyle{definition}

\DeclareMathOperator{\Aut}{Aut}
\DeclareMathOperator{\Ann}{Ann}

\DeclareMathOperator{\Ker}{Ker}

\DeclareMathOperator{\Soc}{Soc}

\DeclareMathOperator{\Ret}{Ret}
\DeclareMathOperator{\id}{id}

\title{On left braces in which every subbrace is an ideal II}

\author{A. Ballester-Bolinches\thanks{Departament de Matem{\`a}tiques, Universitat de Val{\`e}ncia; Dr.\ Moliner, 50; 46100 Burjassot, Val\`encia, Spain; \texttt{Adolfo.Ballester@uv.es}, ORCID 0000-0002-2051-9075; \texttt{Ramon.Esteban@uv.es}, ORCID 0000-0002-2321-8139; \texttt{Vicent.Perez-Calabuig@uv.es}, ORCID 0000-0003-4101-8656} \and R. Esteban-Romero\addtocounter{footnote}{-1}\footnotemark \and L. A. Kurdachenko\thanks{Department of Algebra and Geometry, Oles Honchar Dnipro National University, Dnipro 49010, Ukraine; \texttt{lkurdachenko@gmail.com}; ORCID 0000-0002-6368-7319}\ \thanks{Part of the research of this author has been carried out in the Departament de Matem{\`a}tiques, Universitat de Val{\`e}ncia; Dr.\ Moliner, 50; 46100 Burjassot, Val\`encia, Spain} \and V. P{\'e}rez-Calabuig\addtocounter{footnote}{-3}\footnotemark}

\date{}
\begin{document}
\maketitle

\begin{abstract}
The aim of this paper is to take the study of Dedekind braces, that is, left braces for which every subbrace is an ideal, started in a previous paper, further. Dedekind braces $A$ whose additive group is non-periodic are analysed. We prove sufficient conditions for $A$ to be abelian: it is enough that every element is $2$-nilpotent for the star operation; 
and, if $A$ is hypermultipermutational, it suffices that the additive group of the socle is torsion-free. Both conditions can be translated in terms of set-theoretical solutions of the Yang-Baxter equation. In addition, we prove a structural theorem for the case of $A$ to be a multipermutational brace of level~$2$.
\end{abstract}

\emph{Mathematics Subject Classification (2020):} 16T25, 
  16N40, 
  81R50 
\emph{Keywords:} left braces, Yang-Baxter equation, Dedekind left braces, non-periodic groups.

\section{Introduction}
\label{sec:intro}
\emph{Left braces} introduced by Rump in~\cite{Rump07} stand out as a keystone to cope the classification problem of solutions of the Yang-Baxter equation (YBE), an important equation with a wide variety of applications in both realms of Physics and Mathematics. 

A left brace is a triple $(A,+,\cdot)$ such that $(A,+)$ and $(A,\cdot)$ are group structures, $(A,+)$  abelian, satisfying
\[ a\cdot (b+c) = a\cdot b + a \cdot c - a, \ \text{for every $a,b,c\in A$.}\]
The origin of left braces comes from a generalisation of Jacobson radical rings in terms of the so-called \emph{star operation}
\[ \ast \colon (a,b)\in A \times A \longmapsto a \ast b = a\cdot b - a - b \in A.\]
In particular, a left brace is called \emph{abelian} if both operations coincide, or equivalently, $a\ast b = 0$ for every $a,b\in A$. 

One of the natural lines of attacking the classification problem of solutions is to analyse how the algebraic structure of left braces determines properties of solutions (see~\cite{Rump07}). \emph{Indecomposable} and \emph{multipermutational solutions}  stick out as particularly studied classes of solutions by means of algebraic properties of left braces (see ~\cite{CedoJespersOkninski10, CedoJespersOkninski14, CedoOkninski23, Gateva-IvanovaCameron12, Gateva-Ivanova18-advmath, JedlickaPilitowska23, Smoktunowicz18-tams}). In the former case, the so-called \emph{twist solutions} are in the antipodes of indecomposability and play a key role in the description of classes of solutions (see~\cite{BachillerCedoJespersOkninski19, BallesterEstebanJimenezPerezC24-solubleskewbraces}). Twist solutions can be described in terms of abelianity of left braces, and therefore, conditions ensuring the abelianity of a left brace are very welcomed in this context. On the other hand, the multipermutational level of a solution measures the degree of retractability of a solution (see \cite{EtingofSchedlerSoloviev99}). The study of solutions having finite multipermutational level carries a lot of weight in the classification problem. Their importance relies upon the fact that almost every solution is multipermutational (see~\cite{JespersVanAntwerpenVendramin23}, for example). These solutions are characterised by algebraic properties on the algebraic structures associated, such as left orderability and nilpotency. In this light, one of the main approaches to cope with the problem of classifying multipermutational solutions is to address first the subclass of those one with multipermutational level~$2$. To deal with this strategy, the \emph{socle series} of a left brace (see Section~\ref{sec:prelim}) is of great significance. 

In~\cite{BallesterEstebanKurdachenkoPerezC-dedekind-leftbraces}, we define \emph{Dedekind left braces} as those left braces satisfying that every subbrace is an ideal (see Section~\ref{sec:prelim}). The main results in~\cite{BallesterEstebanKurdachenkoPerezC-dedekind-leftbraces} provide complete descriptions of Dedekind left braces whose additive group is periodic. These structural theorems reveal that such Dedekind left braces provide interesting families of multipermutational solutions of level~$2$.  Bearing in mind that there are also infinite left brace structures associated with solutions, the study of Dedekind left braces undoubtedly calls to explore the non-periodic case, and this is the main aim of this paper. 

Section~\ref{sec:main} contains our main theorems. Theorem~\ref{ateo:2.11nou} provides a sufficient condition for the abelianity of a Dedekind left brace whose additive group is non-periodic: it is sufficient to look at the diagonal of the composition table of the star operation. This can be transcribed in terms of nilpotency elements (see Section~\ref{sec:prelim}), improving a result in~\cite{BallesterFerraraPerezCTrombetti23-arXiv-note}. This result has a strong implication in the classification problem of solutions: if the left brace structure of a solution is Dedekind, we obtain an easy to check sufficient condition for a solution to be a twist solution (see Corollary~\ref{cor:twistsolutions}). 

Theorems~\ref{ateo:Ahypersoc_torsionfreee->Aabelian} and~\ref{ateo:2.17nou} deal with the multipermutationality of Dedekind left braces. Theorem~\ref{ateo:Ahypersoc_torsionfreee->Aabelian} shows that Dedekind left braces whose socle is additively torsion-free are abelian. It implies that multipermutational solutions whose left brace structure is Dedekind are necessarily twist solutions, and therefore, of level $1$ (see Corollary~\ref{cor:mult+dedekind->twist}). Theorem~\ref{ateo:2.17nou}  is focused on the interesting case of left braces with multipermutational level~$2$. For those left braces, we prove a structural theorem which completely describes Dedekind left braces whose additive group is non-periodic.

\section{Preliminaries}
\label{sec:prelim}

In this section, we fix notation and present known notions and results on left braces.

Throughout all the article, we refer to left braces $(A,+,\cdot)$ simply as $A$, and we use juxtaposition for products on left braces. The group structures on a left brace have a common identity which we denote by~$0$. We follow the following hierarchy of operations: $\ast$-products come first than products, which come first than sums on left braces. Given $a\in A$, we refer to the order of $a$ in $(A,+)$ as the \emph{additive order} (finite or infinite) of $a$.

The so-called $\lambda$-action relates both group structures in a left brace $A$. It is denoted by the homomorphism $\lambda \colon a \in (A,\cdot) \mapsto \lambda_a \in (A,+)$, such that  $\lambda_a(b) = -a + ab$, for every $a,b\in A$. Therefore,
\[ ab = a + \lambda_a(b), \quad a+b = a\lambda_{a^{-1}}(b), \quad a \ast b = \lambda_a(b) - b, \quad \forall\, a,b\in A\]
The following equalities are also easily verified and used all over the article:
\begin{eqnarray}
\label{ast_distesq} a \ast (b+c) & = & a \ast b + a \ast c\\
\label{ast_prod} (ab)\ast c & = & a \ast (b \ast c) + b\ast c + a \ast c
\end{eqnarray} 

A non-empty subset $S$ of a left brace $A$ is called a \emph{subbrace} if $(S,+)$ and $(S,\cdot)$ are subgroups of $(A,+)$ and $(A,\cdot)$, respectively. We write $S \leq A$. A subbrace $I \leq A$ is an \emph{ideal} of $A$ if $A \ast I, I \ast A \subseteq I$. Ideals of left braces are $\lambda$-invariant and allow to consider a quotient left brace structure $(A/I, +, \cdot)$, as $(I,+)\unlhd (A,+)$, $(I,\cdot)\unlhd (I, \cdot)$, and  $a+I = aI$ for every $a\in A$. A \emph{subbrace generated} by a subset $S$ of a left brace $A$, $\langle S \rangle$, is the intersection of all subbraces containing $S$. We use $\langle S\rangle_+$ and $\langle S \rangle_{\boldsymbol{\cdot}}$ to respectively denote generated subgroups in the additive and the multiplicative group of $A$. We just write $\langle a\rangle$ for generated substructures by singletons $\{a\}$ with $a\in A$. If $X$ and $Y$ are subsets of $A$, $X \ast Y = \langle x \ast y\mid x \in X, \, y \in Y\rangle_+$. 

\begin{lema}[\cite{BallesterEstebanKurdachenkoPerezC-dedekind-leftbraces}]
\label{lema:a·a=0ciclic}
Let   $a \in A$ such that $a\ast a = 0$. Then, $\langle a \rangle = \langle a \rangle_+ = \langle a \rangle_{\boldsymbol{\cdot}}$ is an abelian subbrace of $A$.  
\end{lema}

\emph{Homomorphisms} between left braces are mappings conserving sums and products, and isomorphic left braces are provided by isomorphisms, i.e. bijective homomorphisms. Given two braces $A$ and $B$, the \emph{direct product} of left braces $A \oplus B$ is given by each direct products of sums and products of $A$ and~$B$. Since both group structures coincide in an abelian left brace, we can refer to them as \emph{isomorphic} to the common group structure.

\bigskip

Nilpotency notions of left braces naturally appear by iterating star products. Following~\cite{Rump07}, the \emph{left and right series} of a left brace $A$ are respectively defined as 
\begin{align*}
& A^1 = A  \geq A^2 = A \ast A \geq \ldots \geq A^{n+1} = A \ast A^n \geq \ldots\\
& A^{(1)} = A \geq A^{(2)} = A \ast A \geq \ldots \geq A^{(n+1)} = A^{(n)}\ast A \geq \ldots 
\end{align*}
$A$ is said to be \emph{left (right) nilpotent} if the left (right) series reaches $0$, and $A$ is called \emph{centrally nilpotent} if $A$ is both left and right nilpotent (see  \cite{BallesterEstebanFerraraPerezCTrombetti-arXiv-cent-nilp,BonattoJedlicka23, CedoSmoktunowiczVendramin19} for further information). 

Engel conditions in radical rings are also translated to left braces. We call a left brace $A$ $m$-\emph{right nil} if for every $a\in A$,
\[ \begin{array}{c}
\underbrace{(\ldots((a\ast a)\ast \ldots )\ast a)}_{m\;\textnormal{times}}=0;
\end{array}
\]
similarly, $A$ is $m$-\emph{left nil} if for every $a\in A$, \[
\begin{array}{c}
\underbrace{(a\ast(\ldots \ast(a\ast a))\ldots )}_{m\;\textnormal{times}}=0 
\end{array}
\] 
It follows that left nil left braces are left nilpotent (see~\cite{Smoktunowicz18-jalg}) but right nil left braces are not right nilpotent in general (see~\cite{BallesterFerraraPerezCTrombetti23-arXiv-note}). However, the following result holds.
\begin{proposicio}[\cite{BallesterFerraraPerezCTrombetti23-arXiv-note}]
\label{prop:2-nil->centnil}
Let $A$ be a finite left brace such that $a \ast a = 0$ for every $a\in A$. Then, $A$ is centrally nilpotent.
\end{proposicio}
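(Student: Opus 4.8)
The plan is to exploit the elementary description of central nilpotency as the conjunction of left and right nilpotency. The hypothesis $a\ast a=0$ for all $a\in A$ says that $A$ is left nil, so $A$ is already left nilpotent by~\cite{Smoktunowicz18-jalg}; fix $c$ with $A^{c+1}=0$. It remains to establish right nilpotency, and the cleanest route is to prove the inclusion $A^{(n)}\subseteq A^{n}$ for every $n\ge 1$: once this is known, $A^{(c+1)}\subseteq A^{c+1}=0$ and we are done. I will freely use the standard facts that each term $A^{n}$ of the left series is an ideal of $A$, that $A^{1}\supseteq A^{2}\supseteq\cdots$, and that $A\ast A^{n}=A^{n+1}$.

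The key device will be a polarisation of the hypothesis obtained from the single relation $(ab)\ast(ab)=0$. Writing $ab=a+b+a\ast b$ and expanding $(ab)\ast(ab)$ by repeated use of the distributive law~\eqref{ast_distesq} and of~\eqref{ast_prod}, and discarding every term of the form $x\ast x$, one is led to an identity of the form
\[
a\ast b+b\ast a=-\bigl(a\ast(b\ast a)+b\ast(a\ast b)+a\ast(a\ast b)+a\ast(b\ast(a\ast b))\bigr),
\]
valid for all $a,b\in A$. Its usefulness is that it converts ``$\ast$-multiplication on the right by $A$'' into terms governed by ``$\ast$-multiplication on the left by $A$'', at the price of one extra factor of~$\ast$. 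I will then specialise $a$ to an arbitrary $u\in A^{n}$, keeping $b\in A$ arbitrary. At the outset $b\ast u\in A\ast A^{n}=A^{n+1}$ while $u\ast b\in A^{n}\ast A\subseteq A^{n}$; substituting these into the identity, each summand on the right-hand side becomes of the form $x\ast(\text{element of }A^{n})$ or $x\ast(\text{element of }A^{n+1})$ with $x\in A$, hence lies in $A\ast A^{n}=A^{n+1}$. Thus $u\ast b+b\ast u\in A^{n+1}$, and subtracting $b\ast u\in A^{n+1}$ gives $u\ast b\in A^{n+1}$. As $u$ and $b$ were arbitrary, this proves $A^{n}\ast A\subseteq A^{n+1}$.

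From here the inclusion $A^{(n)}\subseteq A^{n}$ follows by induction on $n$: it holds for $n=1$ since both series start with $A$, and the inductive step reads $A^{(n+1)}=A^{(n)}\ast A\subseteq A^{n}\ast A\subseteq A^{n+1}$. Consequently $A$ is right nilpotent, hence centrally nilpotent. The step I expect to be the real obstacle is precisely $A^{n}\ast A\subseteq A^{n+1}$: it is not a formal consequence of the definitions, because $\ast$ is not additive in its first variable---the very reason that right nil left braces need not be right nilpotent in general---and overcoming it is exactly where the hypothesis $a\ast a=0$ is essential, via the polarisation identity above. The finiteness of $A$, by contrast, is used only through the cited left-nilpotency result.
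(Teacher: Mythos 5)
The paper itself offers no proof of this proposition---it is imported from \cite{BallesterFerraraPerezCTrombetti23-arXiv-note}---so your argument can only be judged on its own terms. The core of it is right. Expanding $(ab)\ast(ab)=0$ with~\eqref{ast_prod} applied to $c=ab$, then~\eqref{ast_distesq} applied to $ab=a+b+a\ast b$, and discarding $a\ast a$ and $b\ast b$ does yield
\[
a\ast b+b\ast a=-\bigl(a\ast(b\ast a)+b\ast(a\ast b)+a\ast(a\ast b)+a\ast(b\ast(a\ast b))\bigr),
\]
and the passage from $A^{n}\ast A\subseteq A^{n+1}$ to $A^{(n)}\subseteq A^{n}$, hence to right and therefore central nilpotency (with finiteness consumed only by the left-nil $\Rightarrow$ left nilpotent step), is sound.

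The one thing you may not assume is that each $A^{n}$ is an ideal: for a general left brace the terms of the left series are only \emph{left} ideals, and the containment $A^{n}\ast A\subseteq A^{n}$---which you invoke to place $u\ast b$ in $A^{n}$ before applying $b\ast(-)$ and $u\ast(-)$---is a weak form of the very statement you are proving, so as written the justification is circular. The repair is immediate: prove $A^{n}\ast A\subseteq A^{n+1}$ by induction on $n$. For $n=1$ it is the definition of $A^{2}$; at step $n$, since $u\in A^{n}\subseteq A^{n-1}$, the inductive hypothesis $A^{n-1}\ast A\subseteq A^{n}$ already gives $u\ast b\in A^{n}$, after which your estimate of the four right-hand terms (each landing in $A\ast A^{n}=A^{n+1}$ or in $A^{n+2}\subseteq A^{n+1}$) goes through verbatim and $u\ast b=-(b\ast u)+(u\ast b+b\ast u)\in A^{n+1}$. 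With that restructuring the proof is complete, and it is in fact a pleasantly self-contained argument: it shows that $a\ast a=0$ for all $a$ forces $A^{(n)}\subseteq A^{n}$ in an arbitrary left brace, so that any such brace is right nilpotent as soon as it is left nilpotent.
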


Right and central nilpotency play a prominent role in the study of multipermutational solutions of the YBE. In this context, the following substructures stand out: given a subset $S$ of a left brace $A$,
\begin{align*}
\Ann_A^l(S) & = \{a \in A \mid a \ast x = 0,\, \text{for all $x\in S$}\} = \\
& =\{a \in A \mid ax = a+x,\,\text{for all $x\in S$}\};\\
\Ann_A(S) &  =\{a \in A \mid ax = a+x = xa,\,\text{for all $x\in S$} \};
\end{align*}
are respectively the \emph{left annihilator} and \emph{annihilator} of $S$. In case that $S = A$, they are respectively called the \emph{socle} $\Soc(A)$ and the \emph{centre} $\zeta(A)$ of the left brace $A$ (see \cite{BonattoJedlicka23}, \cite{CatinoColazzoStefanelli19} or \cite{CedoSmoktunowiczVendramin19}, for example). It follows that a left brace $A$ is abelian if, and only if, $\zeta(A) = A$. Moreover, every subbrace of the centre of a left brace is an abelian ideal and every subbrace of the socle is abelian.

\begin{lema}[\cite{BallesterEstebanKurdachenkoPerezC-dedekind-leftbraces}]
\label{lema:left-right-ann_substruct}
Let $A$ be a left brace. For every ideal $I$ of $A$, $(\Ann_A^l(I),\cdot)$ and $(\Ann_A(I), \cdot)$ are normal subgroups of $(A,\cdot)$.
\end{lema}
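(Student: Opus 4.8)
The plan is to show that for an ideal $I$, conjugation in $(A,\cdot)$ by an arbitrary $a\in A$ preserves the defining condition of $\Ann_A^l(I)$, and likewise for $\Ann_A(I)$. Let me first treat $\Ann_A^l(I)$. Take $b\in \Ann_A^l(I)$ and $a\in A$; I want to prove $a^{-1}ba \in \Ann_A^l(I)$, i.e. $(a^{-1}ba)\ast x = 0$ for every $x\in I$. The natural tool is the product formula \eqref{ast_prod}, $(uv)\ast c = u\ast(v\ast c) + v\ast c + u\ast c$, which lets one expand $\ast$-products along multiplicative products; applying it twice to $(a^{-1}\,b\,a)\ast x$ (grouping as $(a^{-1}b)\cdot a$) gives an expression involving terms $b\ast(a\ast x)$, $b\ast x$, $a^{-1}\ast(\cdots)$, $a^{-1}\ast x$, and $a\ast x$. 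Now the key point: since $I$ is an ideal, $a\ast x\in I$ and $a^{-1}\ast x\in I$ (and in general $a^{-1}\ast(a\ast x)\in I$ etc.), so every argument appearing on the right of a $\ast$ in the terms starting with $b\ast(\cdot)$ lies in $I$; hence by $b\in\Ann_A^l(I)$ all those terms vanish. One is left with a sum of terms of the form $a^{-1}\ast(\text{something in }I)$; these need not individually be zero, so one must show they cancel. The clean way is to use that $\ast$ by a fixed element is additive on the right (equation~\eqref{ast_distesq}): the surviving terms reassemble into $a^{-1}\ast\big(\text{a specific element of }I\big)$ which, when combined with the remaining pieces, must reduce to $0$; I would verify this by first establishing the simpler identity $(a^{-1}ba)\ast x = \lambda_{a^{-1}}\big(\,b\ast(a\ast x) + b\ast x\,\big) \cdot(\text{adjustment})$ — more precisely, I would use $u\ast v = \lambda_u(v)-v$ together with $\lambda_{uv}=\lambda_u\lambda_v$ to compute $(a^{-1}ba)\ast x = \lambda_{a^{-1}ba}(x) - x = \lambda_{a^{-1}}\lambda_b\lambda_a(x) - x$ and show this equals $\lambda_{a^{-1}}\lambda_a(x) - x = 0$ once we know $\lambda_b$ acts trivially on the relevant $\lambda$-invariant submodule containing $\lambda_a(x)$.

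Actually the slickest route avoids \eqref{ast_prod} entirely: since $I$ is an ideal it is $\lambda$-invariant, so $\lambda_a(x)\in I$ for all $x\in I$, $a\in A$. The condition $b\in\Ann_A^l(I)$ says $b\ast y = \lambda_b(y)-y = 0$, i.e. $\lambda_b$ restricts to the identity on $I$. Then for $a\in A$ and $x\in I$,
\[
(a^{-1}ba)\ast x \;=\; \lambda_{a^{-1}ba}(x)-x \;=\; \lambda_{a^{-1}}\bigl(\lambda_b(\lambda_a(x))\bigr)-x \;=\; \lambda_{a^{-1}}(\lambda_a(x))-x \;=\; x-x \;=\;0,
\]
using $\lambda_a(x)\in I$ in the third equality. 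Hence $a^{-1}ba\in\Ann_A^l(I)$, and since $\Ann_A^l(I)$ is a subgroup of $(A,\cdot)$ (it is a subbrace, or at least the second description makes the multiplicative closure transparent), it is normal in $(A,\cdot)$.

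For $\Ann_A(I)$ one argues similarly but must also handle the right-hand condition $x a = x + a$ for $a\in\Ann_A(I)$, $x\in I$. The condition $b\in\Ann_A(I)$ is equivalent to $\lambda_b|_I=\mathrm{id}_I$ together with $x\ast b = 0$ for all $x\in I$, i.e. $\lambda_x(b)=b$ for all $x\in I$. Conjugate: take $a\in A$, $b\in\Ann_A(I)$; we already get $a^{-1}ba\in\Ann_A^l(I)$ from the first part, so it remains to check $\lambda_x(a^{-1}ba)=a^{-1}ba$ for every $x\in I$. Here I would use the fact (from Lemma~\ref{lema:left-right-ann_substruct}'s context, or directly) that $\Ann_A(I)$ already being an ideal would make $x\ast(a^{-1}ba)\in\Ann_A(I)$; combined with $x\ast(a^{-1}ba)\in \langle\text{conjugates}\rangle$... — more cleanly, rewrite $x\ast(a^{-1}ba)$ via \eqref{ast_prod} applied to $a^{-1}\cdot(ba)$ and then $b\cdot a$, obtaining a sum of terms each of which is a $\ast$-product with one factor in $I$ and is therefore killed by the annihilator property, using repeatedly that $a\ast z, a^{-1}\ast z$ and iterated such expressions stay in $I$ when $z\in I$, and $x\ast w=0$ whenever the relevant element is seen to lie in $\Ann_A^l$-annihilated position.

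The main obstacle is the second part: controlling the right-annihilator condition under conjugation is less symmetric than the $\lambda$-side, because there is no clean "$\rho$-action'' analogue, so one genuinely has to expand $x\ast(a^{-1}ba)$ using \eqref{ast_prod} and \eqref{ast_distesq} and bookkeep which subterms vanish. The left-annihilator part, by contrast, is immediate once one observes that ideals are $\lambda$-invariant and that membership in $\Ann_A^l(I)$ is exactly "$\lambda_b$ fixes $I$ pointwise,'' a property manifestly stable under the conjugation formula $\lambda_{a^{-1}ba}=\lambda_{a^{-1}}\lambda_b\lambda_a$ since $\lambda_a(I)=I$.
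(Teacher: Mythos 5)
The paper does not actually prove this lemma --- it is quoted from \cite{BallesterEstebanKurdachenkoPerezC-dedekind-leftbraces} --- so I can only judge your argument on its own merits. Your first half, for $\Ann_A^l(I)$, is correct and complete: an ideal is $\lambda$-invariant, membership $b\in\Ann_A^l(I)$ says exactly that $\lambda_b$ fixes $I$ pointwise, and $\lambda_{a^{-1}ba}=\lambda_{a^{-1}}\lambda_b\lambda_a$ then also fixes $I$ pointwise. (Even more directly, $\Ann_A^l(I)$ is the kernel of the homomorphism $a\mapsto\lambda_a|_I$ from $(A,\cdot)$ to $\Aut(I,+)$, hence automatically a normal subgroup; this is precisely the formulation the present paper uses for $\Ker\bar\lambda$ in the proof of Lemma~\ref{lema:CcapAnn_torsionfree+kxasta+torsionfree}.)

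The second half, for $\Ann_A(I)$, has a genuine gap. You reduce the problem to showing $x\ast(a^{-1}ba)=0$ for all $x\in I$, but the tool you invoke does not apply: Equation~\eqref{ast_prod} expands a product sitting in the \emph{left} slot of $\ast$, i.e.\ $(uv)\ast c$, whereas here the product $a^{-1}ba$ sits in the \emph{right} slot, and there is no analogous identity for $c\ast(uv)$. The only available expansion, $x\ast(uv)=x\ast\bigl(u+\lambda_u(v)\bigr)=x\ast u+x\ast\lambda_u(v)$ via~\eqref{ast_distesq}, produces terms such as $x\ast a^{-1}$ and $x\ast\lambda_{a^{-1}}\lambda_b(a)$ with $a$ arbitrary; none of these is individually killed by any annihilator hypothesis, and the required cancellation is never exhibited --- you yourself flag this as ``the main obstacle'' and leave it unresolved. (The alternative you sketch first, that ``$\Ann_A(I)$ already being an ideal would make $x\ast(a^{-1}ba)\in\Ann_A(I)$'', is circular and in any case yields membership rather than vanishing.) The clean repair avoids $\ast$-computations entirely: for $a\in\Ann_A^l(I)$ and $x\in I$ one already has $ax=a+x$, so the extra condition $xa=a+x$ defining $\Ann_A(I)$ is equivalent to $xa=ax$; hence $\Ann_A(I)=\Ann_A^l(I)\cap C$, where $C$ is the centralizer of $I$ in $(A,\cdot)$. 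Since $(I,\cdot)\unlhd(A,\cdot)$ for any ideal (as recorded in Section~\ref{sec:prelim}), $C$ is a normal subgroup of $(A,\cdot)$, and the intersection of two normal subgroups is normal.
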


The socle of a left brace $A$ gives rise to an ordinal \emph{socle series} of $A$, which is an ascending series of ideals
\begin{align*}
 0 = \Soc_0(A) \leq \Soc_1(A) \leq \ldots \leq \Soc_{\alpha}(A) \leq \Soc_{\alpha +1} \leq \ldots \Soc_{\gamma}(A),
\end{align*}
defined by the following rule: $\Soc_1(A) = \Soc(A)$ and, recursively,
\begin{align*}
 \Soc_{\alpha+1}(A)/\Soc_{\alpha}(A) &= \Soc(A/\Soc_\alpha(A)),\quad \text{for every ordinal $\alpha$,}\\
 \Soc_{\lambda}(A) & = \bigcup_{\mu < \lambda} \Soc_{\mu}(A),\quad  \text{for every limit ordinal $\lambda$.}
\end{align*}
The last term $\Soc_\infty(A) = \Soc_\gamma(A)$ of this series is called the \emph{hypersocle} of~$A$. It follows that $A$ is right nilpotent if, and only if, $\Soc_n(A) = A$ for some non-negative integer $n$ (see~\cite{CedoSmoktunowiczVendramin19}). In this case, $A$ is also said to have \emph{finite multipermutational level}. If $\Soc_\infty(A) = A$ then we say that $A$ is \emph{hypermultipermutational}.

\subsection*{Left braces and solutions of the YBE}

\noindent A solution of the YBE is a pair $(X,r)$, where $X$ is  a set and $r\colon X \times X \rightarrow X \times X$ is a map, with $r(x,y) = (\lambda_x(y), \rho_y(x))$ for all $x,y\in X$, such that
\begin{itemize}
\item $r_{12}r_{23}r_{12} = r_{23}r_{12}r_{23}$ (set-theoretical solution), where $r_{12}= (r\times \id_X)$ and $r_{23} = (\id_X \times r)$;
\item $r^2 = \id_{X\times X}$ (involutive);
\item $\lambda_x$ and $\rho_x$ are injective for every $x\in X$ (non-degenerate).
\end{itemize} 
A \emph{twist solution} $(X,r)$ is given by $r(x,y) = (y,x)$ for every $x\in X$, i.e. $\lambda_x = \rho_x = \id_X$ for every $x\in X$. 

The \emph{retraction solution} $\operatorname{Ret}(X,r) = (X/\!\sim,\overline{r})$ is defined by means of the equivalence relation $\sim$ in $X$: $x\sim y$ if $\lambda_x = \lambda_y$ and $\rho_x = \rho_y$. Then,
\[ \overline{r}\big([x], [y]\big) = \big([\lambda_x(y)], [\rho_y(x)]\big), \quad \text{for all $[x], [y] \in X/\! \sim$}.\]
The retraction series is defined recursively as
\begin{align*}
\operatorname{Ret}^0(X, r) & = (X,r)\\
\operatorname{Ret}^{\alpha+1}(X,r) & = \operatorname{Ret}\big(\operatorname{Ret}^\alpha(X,r)\big), \quad \text{for every ordinal $\alpha$,}\\
\operatorname{Ret}^{\lambda}(X,r) & = (X/\!\sim_{\lambda}, \overline{r}_{\lambda}),  \quad \text{for every limit ordinal $\lambda$,}
\end{align*}
where $\sim_{\lambda} := \bigcup_{\mu< \lambda} \sim_{\mu}$, and $\sim_\mu$ is the equivalence relation  in $\operatorname{Ret}^{\mu}(X,r)$ for every $\mu < \lambda$.

A solution $(X,r)$ is \emph{multipermutational of level} $m$, if $m$ is the smallest non-negative integer such that $\operatorname{Ret}^m(X,r)$ has cardinality $1$. For instance, twist solutions are multipermutational solutions of level~$1$. We say that $(X,r)$ is \emph{hypermultipermutational} if $\Ret^\infty(X,r)$ has cardinality $1$. In case that $X$ is a finite set, it is clear that a solution is hypermultipermutational if, and only if, it is multipermutational of finite level.

Following~\cite{Rump07}, every left brace $A$ provides a solution $(A, r_A)$, where 
\[ r_A(a,b) = \big(\lambda_a(b), \lambda^{-1}_{\lambda_a(b)}(a)\big) \quad \text{for every $a,b\in A$.}\]
We call $(A,r_A)$ the \emph{associated solution} with $A$. 

Conversely, if $(X,r)$ is a solution, with $r(x,y) = (\lambda_x(y),\rho_y(x))$ for every $x,y\in X$, the \emph{structure group} of the solution is defined as
\[ G(X,r):= \langle x\in X \mid xy = \lambda_x(y)\rho_y(x), \, \text{for every $x,y\in X$}\rangle\]
and it admits a left brace structure. We call $(G(X,r), + ,\cdot)$ the \emph{left brace structure} of $(X,r)$. In particular, if $(X,r)$ is a twist solution, then $G(X,r)$ is an abelian left brace isomorphic to $\mathbb{Z}^X$, the free abelian group on $X$.  Moreover, it holds that the following diagram commutes
\begin{equation}
\label{eq:diagram}
 \xymatrix{ X \times X \ar[r]^{r} \ar[d]_{\iota \times \iota} & X \times X \ar[d]^{\iota \times \iota} \\ G \times G \ar[r]_{r_G} & G \times G}
\end{equation}
where $G:= G(X,r)$, $(G, r_G)$ is the associated solution with $G$, and the canonical inclusion $\iota\colon X \rightarrow G(X,r)$ is an injective map (see~\cite{EtingofSchedlerSoloviev99}). Observe that the notation $\lambda$ for the first component of solutions does not suppose a notation clash with the $\lambda$-action as diagram~\eqref{eq:diagram} ensures that both coincide in~$X$.

This correspondence paves the way to characterise the multipermutational character of a solution by means of nilpotency of left braces. It follows that $(X,r)$ is (hyper)multipermutational if, and only if, $G(X,r)$ is (hyper)multipermutational (see~\cite{CedoJespersKubatVanAntwerpenVerwimp23} and~\cite{CedoSmoktunowiczVendramin19}).

\section{Main results}
\label{sec:main}

In this section we collect our main results about Dedekind left braces whose additive group is not periodic. The proof of each of them is presented in each corresponding subsection.

In~\cite{BallesterEstebanKurdachenkoPerezC-dedekind-leftbraces}, we define \emph{Dedekind left braces} as left braces such that every subbrace is an ideal. It is clear that the property of being Dedekind is inherited by subbraces and quotients. On the other hand, left braces whose additive group is isomorphic to the infinite cyclic group $\mathbb{Z}$ are a first natural example of left braces whose additive group is not periodic.

\begin{proposicio}
\label{prop:dedekindbraces-Z}
Abelian left braces isomorphic to $\mathbb{Z}$ are the unique Dedekind left braces whose additive group is isomorphic to an infinite cyclic group.
\end{proposicio}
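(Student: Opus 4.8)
The plan is to split off the routine direction and concentrate on the substantive one. It is immediate that every abelian left brace is Dedekind, since in that case $A\ast S=S\ast A=0\subseteq S$ for every subbrace $S$ (equivalently, every subbrace lies in the centre, hence is an ideal); and, up to isomorphism, there is a single abelian left brace with additive group $\mathbb{Z}$, namely the one in which $\cdot$ and $+$ coincide. So the content of the statement is the converse: if $A$ is a Dedekind left brace with $(A,+)=\langle e\rangle_+\cong\mathbb{Z}$, then $A$ is abelian.

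First I would exploit the $\lambda$-action. Since $\Aut(\mathbb{Z},+)$ has order $2$, consisting of $\id$ and $x\mapsto -x$, every $\lambda_a$ equals $\id$ or $-\id$, and the image of $\lambda$ in $\Aut(A,+)$ has order $1$ or $2$. If that image is trivial, then $a\ast b=\lambda_a(b)-b=0$ for all $a,b\in A$, so $A$ is abelian and we are done. The substantive case, and the one I expect to carry the whole argument, is that the image has order $2$; the aim is then to contradict the Dedekind hypothesis.

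In that case $\Soc(A)=\Ker\lambda=\{a\in A\mid\lambda_a=\id\}$ is an ideal of index $2$ in $(A,\cdot)$. The key move is that, $\Soc(A)$ being an ideal, the additive and multiplicative cosets coincide ($a+\Soc(A)=a\Soc(A)$ for all $a\in A$), so $\Soc(A)$ also has index $2$ in $(A,+)\cong\mathbb{Z}$; that is, $\Soc(A)=\langle 2e\rangle_+$. Hence $\lambda_{ne}=-\id$ for every odd integer $n$, in particular $\lambda_{3e}=-\id$. Now I would test $S=\langle 3e\rangle_+$. It is an additive subgroup, and since each $\lambda_a$ maps $S$ onto itself, $S$ is closed under multiplication, because $ab=a+\lambda_a(b)\in S$ for $a,b\in S$, and under multiplicative inverses, because from $0=aa^{-1}=a+\lambda_a(a^{-1})$ one gets $a^{-1}=\lambda_a(-a)\in S$ (using that $\lambda_a$ is an involution); thus $S$ is a subbrace. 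But $3e\ast e=\lambda_{3e}(e)-e=-2e\notin S$, so $S\ast A\not\subseteq S$ and $S$ is not an ideal, contradicting that $A$ is Dedekind. Therefore the image of $\lambda$ is trivial and $A$ is abelian.

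The only delicate point is the transfer of ``index $2$ in $(A,\cdot)$'' to ``index $2$ in $(A,+)$'', which rests on the coset identity for ideals; once $\Soc(A)=\langle 2e\rangle_+$ is established, choosing $S=\langle 3e\rangle_+$ — where $3$ is the least odd integer that does not divide $2$, so that the $\ast$-product $3e\ast e=-2e$ escapes $S$ — and checking that it is a subbrace but not an ideal is a short verification.
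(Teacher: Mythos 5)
Your proof is correct, but it takes a genuinely different route from the paper's. The paper invokes Rump's classification of left braces with infinite cyclic additive group (\emph{Classification of cyclic braces}, J.\ Pure Appl.\ Algebra 209 (2007)): there are exactly two such braces up to isomorphism, and for the non-abelian one, with multiplication $(na)\cdot(ma)=\bigl((-1)^nm+n\bigr)a$, it exhibits $\langle ma\rangle_+$ (for $m$ odd, implicitly with $|m|>1$) as a subbrace that is not an ideal, via the same computation $(ma)\ast a=-2a$. You instead derive the needed structure from scratch: since $\Aut(\mathbb{Z},+)\cong C_2$, each $\lambda_a$ is $\pm\id$; if the image of $\lambda$ is nontrivial, then $\Soc(A)=\Ker\lambda$ is an ideal of multiplicative index $2$, and the coset identity $a+I=aI$ for ideals transfers this to additive index $2$, forcing $\Soc(A)=\langle 2e\rangle_+$ and $\lambda_{3e}=-\id$; then $\langle 3e\rangle_+$ is a subbrace (your closure checks under product and inverse are right, using that $\lambda_a=\pm\id$ preserves every additive subgroup) but not an ideal, since $3e\ast e=-2e\notin\langle 3e\rangle_+$. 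The two arguments converge on essentially the same witness subbrace; what yours buys is self-containedness (no appeal to the external classification), at the cost of not identifying the explicit multiplication on the hypothetical non-abelian brace, which the citation gives the paper for free. Both are complete; the one point worth making explicit in your write-up is that $\Soc(A)$ is an ideal of \emph{every} left brace (so the coset identity applies without invoking the Dedekind hypothesis there), which the paper's preliminaries do state.
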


\begin{proof}
It is clear that abelian left braces are Dedekind. On the other hand, it is proved in~\cite{Rump07-jpaa} that there are only two left braces, up to isomophism, whose additive group is isomorphic to $\mathbb{Z}$.

Let $A$ be left brace such that $(A, +) = \langle a \rangle_+ \cong \mathbb{Z}$ for some $a\in A$. If $A$ is not abelian, then $(A, \cdot)$ is defined as follows (see \cite{Rump07-jpaa}):
\[ (na) \cdot (ma) := \big((-1)^nm + n\big)a \quad \text{for every $n,m \in \mathbb{Z}$.}\] 
It follows that subbraces of $A$ are of the form $\langle ka \rangle_+$, with $k \in \mathbb{Z}$. Moreover, $\Soc(A) = \langle 2a \rangle_+$, as $(2a) \cdot (na) = 2a + na$ for every $n\in \mathbb{Z}$, and $(ma)\cdot (ma) = 0$ for every odd integer $m$. 

Let $m$ an odd integer and $k \in \mathbb{Z}$ such that $m = 2k+1$. Then, $\langle ma \rangle_+$ is not an ideal, since 
\begin{align*}
 (ma) \ast a & = \big((2k+1)a\big) \ast a = \big((2ka)\cdot a\big) \ast a = \\
& = (2ka) \ast (a \ast a) + a \ast a + (2ka) \ast a = a \ast a = a\cdot a - 2a = \\
& = -2a \notin \langle ma \rangle_+
\end{align*}
Hence, $A$ is not a Dedekind left brace.
\end{proof}

Proposition~\ref{prop:2-nil->centnil} states that $2$-right nil (or, equivalently, $2$-left nil) left braces are centrally nilpotent. In case of Dedekind left braces whose additive group is not periodic we can say more.

\begin{athm}
\label{ateo:2.11nou}
Let $A$ be a Dedekind left brace whose additive group is not periodic. If $a \ast a = 0$ for each element $a\in A$, then $A$ is abelian.
\end{athm}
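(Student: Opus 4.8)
The plan is to exploit that $(A,+)$ is non-periodic, so $A$ contains an element $a$ of infinite additive order, and to combine this with the hypothesis $x\ast x=0$ for all $x$, which by Lemma~\ref{lema:a·a=0ciclic} makes every cyclic substructure $\langle x\rangle=\langle x\rangle_+=\langle x\rangle_{\boldsymbol\cdot}$ an abelian subbrace. Since $A$ is Dedekind, each such $\langle x\rangle$ is in fact an ideal, hence $\lambda$-invariant and satisfies $A\ast\langle x\rangle\subseteq\langle x\rangle$ and $\langle x\rangle\ast A\subseteq\langle x\rangle$. The strategy is to show that an arbitrary element $a$ of infinite additive order lies in $\zeta(A)$; since the set of elements of infinite additive order together with $0$ will be shown to generate $A$ (indeed every element is a sum of such), abelianity of $A$ follows from $\zeta(A)=A$.

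First I would fix $a\in A$ of infinite additive order and an arbitrary $b\in A$, and analyse $a\ast b$ and $b\ast a$. Because $\langle a\rangle$ is an ideal, $b\ast a\in\langle a\rangle_+$, so $b\ast a = ka$ for some integer $k$ (depending on $b$); similarly $a\ast b\in\langle b\rangle_+$. Using $\langle b\rangle$ an ideal as well, $a\ast b\in\langle b\rangle_+\cap(\text{image constraints from }a)$. The key computational step is to pin down $k$: applying \eqref{ast_distesq} with the element $a+a$ (still of infinite additive order, since $(A,+)$ is torsion-free in the relevant direction, or at least $na\neq 0$ for all $n\neq 0$) gives $b\ast(2a)=2(b\ast a)=2ka$, while directly $b\ast(2a)\in\langle 2a\rangle_+$ forces a divisibility; iterating with $b\ast(na)=n(b\ast a)=nka$ and combining with $(na)\ast(na)=0$ should squeeze $k=0$, because the additive order of $a$ being infinite removes the torsion obstruction that would otherwise allow a nonzero multiple. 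Dually, from $a\ast b\in\langle b\rangle_+$ and $a\ast a=0$, together with \eqref{ast_prod} applied to the product $a\cdot a$ (which equals some element of $\langle a\rangle$), one forces $a\ast b=0$ too. This gives $\lambda_a=\id$ and $\lambda_b(a)=a$ for all $b$, i.e. $a\in\zeta(A)$.

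Then I would close the argument by showing every element of $A$ is a $\mathbb Z$-combination (additively) of elements of infinite order. Since $(A,+)$ is abelian and non-periodic, write $(A,+)$ as having a non-trivial torsion-free quotient or a non-trivial free part; more robustly, note that if $t$ has finite additive order and $a$ has infinite additive order then $a+t$ and $a$ both have infinite order (their difference is $t$), so $t=(a+t)-a$ is a difference of two infinite-order elements, each central by the previous step, hence $t\in\zeta(A)$ as well. Thus $\zeta(A)$ contains all infinite-order elements and all finite-order elements, so $\zeta(A)=A$ and $A$ is abelian.

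The main obstacle I anticipate is the squeezing step that forces $k=0$: one must rule out the possibility that $b\ast a$ is a nonzero proper multiple $ka$ where the "expected" torsion relation $\operatorname{ord}(a)\mid(\text{something})\cdot k$ becomes vacuous precisely because $\operatorname{ord}(a)=\infty$. The right tool is likely to iterate the ideal condition along the chain $\langle a\rangle\supseteq\langle 2a\rangle\supseteq\langle 4a\rangle\supseteq\cdots$ together with $b\ast(2^n a)=2^n(b\ast a)$ from \eqref{ast_distesq}, and observe that $b\ast(2^n a)\in\langle 2^n a\rangle_+$ forces $2^n\mid k$ for all $n$, whence $k=0$ in $\mathbb Z$. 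Handling the multiplicative (i.e. $\rho$-type) condition $ba=ab$ symmetrically, possibly via Lemma~\ref{lema:left-right-ann_substruct} to get that $\Ann_A^l(\langle a\rangle)$ and $\Ann_A(\langle a\rangle)$ are normal in $(A,\cdot)$, is the second delicate point, but it should run in parallel once $a\ast b=b\ast a=0$ is established.
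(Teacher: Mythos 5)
Your overall architecture (show every element of infinite additive order is central, then write each torsion element as a difference $t=(a+t)-a$ of two infinite-order elements) is sound, and your last step matches in spirit what the paper does for the case of two torsion elements. But the step you yourself flag as the crux --- forcing $k=0$ in $b\ast a=ka$ --- does not work as proposed, and this is a genuine gap, not a technicality. From \eqref{ast_distesq} you get $b\ast(2^na)=2^n(b\ast a)=2^nka$, and the ideal condition gives $b\ast(2^na)\in\langle 2^na\rangle_+=2^n\mathbb{Z}a$; but $2^nka$ lies in $2^n\mathbb{Z}a$ for \emph{every} integer $k$, so no divisibility condition on $k$ is extracted and the ``squeeze'' is vacuous. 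Iterating along the chain $\langle a\rangle\supseteq\langle 2a\rangle\supseteq\cdots$ therefore proves nothing. The dual step, deriving $a\ast b=0$ from \eqref{ast_prod} applied to $a\cdot a$, is likewise only gestured at.

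The idea you are missing is to use the ideal property of \emph{both} cyclic subbraces simultaneously: since $a\ast a=0=b\ast b$, Lemma~\ref{lema:a·a=0ciclic} and the Dedekind hypothesis make both $\langle a\rangle_+$ and $\langle b\rangle_+$ ideals, so $a\ast b$ and $b\ast a$ lie in $\langle a\rangle_+\cap\langle b\rangle_+$. One then argues by cases on additive orders, which is exactly the paper's proof. If $a$ has infinite order and $b$ finite order, the intersection is trivial and $a\ast b=b\ast a=0$ at once. If both have infinite order, either the intersection is trivial (same conclusion) or $na=kb$ for some nonzero integers $n,k$, and then $0=n(a\ast a)=a\ast(na)=a\ast(kb)=k(a\ast b)$ with $a\ast b$ in the torsion-free group $\langle a\rangle_+$ forces $a\ast b=0$ (and symmetrically $b\ast a=0$). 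The case of two torsion elements is settled by adding an infinite-order element $c$ and expanding $a\ast(b+c)$, which is essentially your translation trick. So your plan can be repaired, but only by replacing the $2^n$-divisibility argument with the two-sided intersection argument.
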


\begin{corolari}
\label{cor:twistsolutions}
Let $(X,r)$ be a solution such that $G(X,r)$ is a Dedekind left brace. Assume that $r(x,x) = (x,x)$ for every $x\in X$. Then, $(X,r)$ is a twist solution and $G(X,r)$ is abelian isomorphic to $\mathbb{Z}^X$.
\end{corolari}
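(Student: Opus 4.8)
The plan is to derive Corollary~\ref{cor:twistsolutions} from Theorem~\ref{ateo:2.11nou} together with the correspondence between solutions and their structure braces. First I would set $G := G(X,r)$ and invoke the fact that $G$ carries a left brace structure. The hypothesis $r(x,x) = (x,x)$ for every $x \in X$ says precisely that $\lambda_x(x) = x$ and $\rho_x(x) = x$ for each $x$; in particular $\lambda_x(x) = x$, so by the identity $a \ast b = \lambda_a(b) - b$ recorded in Section~\ref{sec:prelim} we get $x \ast x = \lambda_x(x) - x = 0$ for every $x \in X$. The subtle point is that this only gives $a \ast a = 0$ for the \emph{generators} $a = \iota(x)$, not a priori for all $a \in G$, so the next step is to bootstrap this to all of $G$.

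To do this I would use that $G = \langle \iota(X) \rangle_{\boldsymbol{\cdot}}$ as a multiplicative group together with the product formula~\eqref{ast_prod}, $(ab) \ast c = a \ast (b \ast c) + b \ast c + a \ast c$. By Lemma~\ref{lema:a·a=0ciclic}, each $\langle \iota(x) \rangle$ is an abelian subbrace, hence (since $G$ is Dedekind) an abelian ideal; more usefully, $\iota(x) \in \Soc(G)$, because for any generator $\iota(y)$ one computes $\iota(x) \ast \iota(y) = \lambda_{\iota(x)}(\iota(y)) - \iota(y)$ and, using that commuting diagram~\eqref{eq:diagram} forces $\lambda_{\iota(x)}(\iota(y)) = \iota(\lambda_x(y))$ — wait, that is not zero in general. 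So instead the cleaner route is: show directly that $a \ast a = 0$ for all $a \in G$ by induction on the length of a multiplicative word, using~\eqref{ast_prod} with $c = ab$ and the additive distributivity~\eqref{ast_distesq}; the Dedekind hypothesis enters to control the cross terms $a \ast b$ via the fact that $\langle \iota(x) \rangle$ and $\langle \iota(y) \rangle$ being ideals forces $\iota(x) \ast \iota(y) \in \langle \iota(x) \rangle_+ \cap \langle \iota(y) \rangle_+$ — and here I expect the main obstacle to lie, because making this intersection argument actually yield $\iota(x) \ast \iota(y) = 0$ (or at least enough vanishing to run the induction) requires care and may instead need the full strength of Theorem~\ref{ateo:2.11nou}'s proof technique rather than just its statement.

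Assuming that hurdle is cleared and $a \ast a = 0$ holds for every $a \in G$, Theorem~\ref{ateo:2.11nou} applies provided $(G,+)$ is not periodic — but the structure group of any nonempty solution has $(G,+) \cong \mathbb{Z}^X$ by the discussion preceding diagram~\eqref{eq:diagram}, which is torsion-free and non-periodic (here one needs $X \neq \emptyset$, which is implicit). Hence $G$ is abelian. Finally I would translate back: since $G$ is abelian, $\lambda_a = \id_G$ for all $a \in G$, so the associated solution $(G, r_G)$ is the twist solution on $G$; by the commuting diagram~\eqref{eq:diagram} and injectivity of $\iota$, $r(x,y) = (y,x)$ for all $x, y \in X$, i.e. $(X,r)$ is a twist solution. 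Then $G(X,r)$ is, by the stated description of structure braces of twist solutions, abelian and isomorphic to $\mathbb{Z}^X$, completing the proof. The one genuinely delicate step remains the propagation of $a \ast a = 0$ from the generating set $\iota(X)$ to all of $G$; if that cannot be done cheaply, the fallback is to note that $G$ being Dedekind and every \emph{generator} being $2$-nilpotent already forces, via the subbrace-is-ideal constraints applied to the cyclic subbraces $\langle \iota(x) \rangle$, that $G^{(2)} = G \ast G$ is generated additively by elements $\iota(x) \ast \iota(y)$ which lie in abelian ideals, and then run Theorem~\ref{ateo:2.11nou} on a suitable subbrace or quotient.
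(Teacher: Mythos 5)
Your setup is right, and you correctly isolate the one nontrivial point: the hypothesis only gives $\iota(x)\ast\iota(x)=0$ for the \emph{generators}, not $a\ast a=0$ for every $a\in G$. But you then leave that point as an acknowledged ``hurdle'' and make the rest of the argument conditional on clearing it, so as written there is a genuine gap. Moreover, you misplace where the difficulty lies: the intersection argument you worry about does go through, and the propagation of $a\ast a=0$ to all of $G$ (your induction on word length, and your vague fallback about $G\ast G$) is not needed at all. The paper's proof simply reruns the \emph{argument} of Theorem~\ref{ateo:2.11nou} --- not its statement --- on a pair of generators $x,y\in X$: since $x\ast x=y\ast y=0$, Lemma~\ref{lema:a·a=0ciclic} together with the Dedekind hypothesis makes $\langle x\rangle_+$ and $\langle y\rangle_+$ ideals, so $x\ast y\in\langle x\rangle_+\cap\langle y\rangle_+$; as the generators have infinite additive order in $(G,+)$, either this intersection is trivial and $x\ast y=0$ at once, or $nx=ky$ with $n,k\neq 0$, whence $k(x\ast y)=x\ast(nx)=n(x\ast x)=0$ and $x\ast y=0$ because $\langle x\rangle_+$ is torsion-free. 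This already finishes the corollary: $x\ast y=0$ for all $x,y\in X$ translates through diagram~\eqref{eq:diagram} into $r(x,y)=(y,x)$, so $(X,r)$ is a twist solution, and the stated description of structure braces of twist solutions gives $G(X,r)\cong\mathbb{Z}^X$ abelian. You actually glimpse this (``may instead need the full strength of Theorem~\ref{ateo:2.11nou}'s proof technique rather than just its statement''), but you never execute it.

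Two smaller points. Your claim that $(G,+)\cong\mathbb{Z}^X$ for an arbitrary solution is correct and standard (the paper uses it in Corollary~\ref{cor:mult+dedekind->twist}), but the text only states it explicitly for twist solutions; for the argument above all you need is that the generators have infinite additive order. And note that once $x\ast y=0$ is established for generators, invoking Theorem~\ref{ateo:2.11nou} as a black box is both unavailable (you would still need $a\ast a=0$ on all of $G$) and unnecessary --- the conclusion follows from the diagram and the twist-solution description directly.
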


The next theorems deal with the hypermultipermutational case and the multipermutational case of level $2$, respectively. In the former case, bearing in mind that socles of left braces are canonical abelian subbraces, it turns out that the torsion-free character of socles is a key property to make Dedekind left braces abelian. The latter case relies on an exhaustive study of the structural relation between the first and  second non-zero terms of the socle series of Dedekind left braces.

\begin{athm}
\label{ateo:Ahypersoc_torsionfreee->Aabelian}
Let $A$ be a Dedekind hypermultipermutational left brace. If the additive group of the socle is torsion-free, then $A$ is abelian.
\end{athm}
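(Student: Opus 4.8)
The plan is to prove that under the stated hypotheses every element $a\in A$ satisfies $a\ast a=0$, so that Theorem~\ref{ateo:2.11nou} applies once we know the additive group is not periodic, and to handle the periodic case by reducing to the torsion-free socle. First I would observe that $\Soc(A)$ is a torsion-free abelian group, hence torsion-free abelian left brace, and it is an ideal; since being Dedekind passes to quotients, $A/\Soc(A)$ is again a Dedekind hypermultipermutational left brace, and an induction on the socle length (transfinite, using that $\Soc_\infty(A)=A$) is the natural engine. The base of the induction is $\Soc_1(A)=\Soc(A)$, which is abelian; the inductive step must promote abelianity of $\Soc_\alpha(A)$ and of $A/\Soc_\alpha(A)$ to a strong structural conclusion about $A$ itself, which is the crux.

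The key algebraic step is to pin down, for an arbitrary $a\in A$, the additive order of $a\ast a$. Using Lemma~\ref{lema:a·a=0ciclic} in the contrapositive: if we can show $a\ast a$ has finite additive order for every $a$, then, because $A$ is Dedekind, $\langle a\rangle$ is an ideal and one computes $a\ast(a\ast a)$, $(a\ast a)\ast a$ inside $\langle a\rangle$; combining this with right nilpotency coming from hypermultipermutationality, one forces $a\ast a\in\Soc(A)$, which is torsion-free, hence $a\ast a=0$. To see that $a\ast a$ is additively torsion, I would exploit that in a Dedekind left brace the additive and multiplicative subgroups generated by $a$ are intertwined: $\langle a\rangle$ being an ideal means $a\ast\langle a\rangle\subseteq\langle a\rangle$ and $\langle a\rangle\ast a\subseteq\langle a\rangle$, so the multiplicative group $\langle a\rangle_{\boldsymbol\cdot}$ acts on the abelian group $\langle a\rangle_+$ through $\lambda$, and the quotient $\langle a\rangle/(\langle a\rangle\cap\Soc_\alpha(A))$ sits inside the Dedekind brace $A/\Soc_\alpha(A)$ whose socle — by the inductive hypothesis analysed carefully — controls the torsion. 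Essentially, the torsion of $a\ast a$ must "disappear" into the torsion-free socle along the series, leaving $a\ast a=0$.

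Once $a\ast a=0$ holds for all $a\in A$, I split on periodicity of $(A,+)$: if $(A,+)$ is not periodic, Theorem~\ref{ateo:2.11nou} gives abelianity directly. If $(A,+)$ is periodic, then every cyclic additive subgroup is finite, but Lemma~\ref{lema:a·a=0ciclic} shows each $\langle a\rangle=\langle a\rangle_+$ is an abelian subbrace; since $a\ast a=0$ means $a\in\Soc_A^l(a)$ and, using that $\langle a\rangle$ is an ideal together with Lemma~\ref{lema:left-right-ann_substruct}, one gets $a\in\Soc(A)$ for every $a$, whence $A=\Soc(A)$ is abelian. (Alternatively, a periodic Dedekind brace with torsion-free socle must have trivial socle, forcing $A=\Soc_\infty(A)=0$.) The main obstacle I anticipate is the inductive step showing $a\ast a$ is additively torsion: one must track how the star-square of a general element interacts with a possibly long, transfinite socle series, and make sure the torsion is genuinely absorbed rather than merely shifted; equation~\eqref{ast_prod} applied to $\langle a\rangle$ being an ideal, plus the fact that $\Soc(A/\Soc_\alpha(A))$ inherits torsion-freeness at each stage (which itself needs an argument, since subquotients of torsion-free groups need not be torsion-free — here one must use that the socle of the quotient is a \emph{specific} ideal, not an arbitrary subquotient), is where the real work lies.
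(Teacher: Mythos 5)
Your reduction to ``show $a\ast a=0$ for every $a\in A$ and then invoke Theorem~\ref{ateo:2.11nou}'' is a legitimate target (and your parenthetical disposal of the periodic case is correct: a periodic additive group with torsion-free socle forces $\Soc(A)=0$, hence $A=\Soc_\infty(A)=0$), but the step that would prove $a\ast a=0$ is precisely the one you leave as ``the crux'' and ``where the real work lies'', so the proposal has a genuine gap rather than a proof. The mechanism you gesture at does not close it: you invoke ``right nilpotency coming from hypermultipermutationality'', but hypermultipermutational only means the transfinite socle series reaches $A$, which is strictly weaker than right nilpotency (finite socle length), so you cannot conclude $a\ast a\in\Soc(A)$ for arbitrary $a$ this way; and in the periodic branch the inference ``$a\ast a=0$ implies $a\in\Soc(A)$'' is false in general --- the finite $2$-right nil braces of Proposition~\ref{prop:2-nil->centnil} are centrally nilpotent but need not be abelian, so $a\ast a=0$ for all $a$ does not place $a$ in the socle. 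Your worry about whether $\Soc(A/\Soc_\alpha(A))$ inherits torsion-freeness is well founded, and no argument is supplied.

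The paper's proof shows where the actual content lies, and it is disjoint from your outline. One works only with $S=\Soc(A)$ and $C=\Soc_2(A)$ (if $A\neq S$, hypermultipermutationality forces $C\neq S$, so no transfinite induction is needed). The two pillars are: (i) since $S$ is abelian, every subgroup of $(S,+)$ is a subbrace, hence an ideal, hence $\lambda$-invariant, and Schmidt's theorem on power automorphisms then says the $\lambda$-action of $C$ on $S$ is either trivial or inversion, i.e.\ $N=C\cap\Ann_A(S)$ has index $1$ or $2$ in $C$; (ii) torsion-freeness of $S$ makes $(N/S,+)$ torsion-free, and for $d\in N\setminus S$ one exhibits an explicit subbrace ($\langle d\rangle_+$ itself, or $\langle 2d\rangle$ when $d\ast d\neq 0$, using $\langle d\rangle=\langle d\rangle_+\oplus\langle d\ast d\rangle_+$) that fails to be an ideal; the residual index-$2$ inversion case is killed by the computation $0=a^4=2a^2$ with $a^2\in S$ torsion-free. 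None of this --- the power-automorphism argument, the torsion-freeness of $N/S$, or the construction of a non-ideal subbrace --- is present in your proposal, so the argument as written cannot be completed along the lines you describe.
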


\begin{corolari}
\label{cor:mult+dedekind->twist}
Let $(X,r)$ be a hypermultipermutational solution such that the left brace structure $G(X,r)$ is Dedekind whose additive group is not periodic. Then, $(X,r)$ is a twist solution. In particular, if $X$ is a finite set, multipermutational solutions $(X,r)$ with Dedekind left braces $G(X,r)$ are necessarily twist solutions.
\end{corolari}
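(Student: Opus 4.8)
The plan is to pass to the structure brace and invoke Theorem~\ref{ateo:Ahypersoc_torsionfreee->Aabelian}. Write $G:=G(X,r)$. Since $(X,r)$ is hypermultipermutational, $G$ is a hypermultipermutational left brace, and by hypothesis $G$ is Dedekind. To apply Theorem~\ref{ateo:Ahypersoc_torsionfreee->Aabelian} I must check that the additive group of $\Soc(G)$ is torsion-free, and this is where the additive structure of structure braces enters: for any solution $(X,r)$ the additive group $(G(X,r),+)$ is free abelian on $X$ (see, e.g., \cite{EtingofSchedlerSoloviev99}), so any of its subgroups, and in particular $(\Soc(G),+)$, is torsion-free. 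Incidentally this also explains why the non-periodicity hypothesis is automatic once $X\neq\emptyset$. Theorem~\ref{ateo:Ahypersoc_torsionfreee->Aabelian} then gives that $G$ is abelian.

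Next I would translate abelianity of $G$ into a statement about $r$. If $G$ is abelian then $a\ast b=0$, equivalently $\lambda_a=\id_G$, for all $a,b\in G$; hence the associated solution satisfies $r_G(a,b)=\bigl(\lambda_a(b),\lambda^{-1}_{\lambda_a(b)}(a)\bigr)=(b,a)$, i.e.\ $r_G$ is the twist map on $G$. Since the canonical inclusion $\iota\colon X\to G$ is injective and diagram~\eqref{eq:diagram} commutes, for all $x,y\in X$ we obtain $(\iota\times\iota)\bigl(r(x,y)\bigr)=r_G\bigl(\iota(x),\iota(y)\bigr)=\bigl(\iota(y),\iota(x)\bigr)$, so that $r(x,y)=(y,x)$ by injectivity of $\iota\times\iota$. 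Thus $(X,r)$ is a twist solution, and in particular $G\cong\mathbb{Z}^X$ as recorded in Section~\ref{sec:prelim}.

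For the final assertion, when $X$ is finite a multipermutational solution of finite level is hypermultipermutational (as noted in Section~\ref{sec:prelim}), and $\mathbb{Z}^X\cong(G(X,r),+)$ is non-periodic provided $X\neq\emptyset$, the empty solution being trivially a twist solution; hence the first part applies verbatim.

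The only genuinely delicate point I foresee is the verification that $\Soc(G)$ is additively torsion-free: once this is in hand, the rest is a mechanical unwinding of the brace/solution correspondence and of the commutativity of~\eqref{eq:diagram}. The free abelian additive structure of structure braces is the most economical way to secure it and, as noted, it simultaneously renders the non-periodicity hypothesis transparent.
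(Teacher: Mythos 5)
Your proposal is correct and follows the same route as the paper's own (one-line) proof: apply Theorem~\ref{ateo:Ahypersoc_torsionfreee->Aabelian} to $G(X,r)$, whose additive group is torsion-free because it is free abelian on $X$, and then read off the twist property from diagram~\eqref{eq:diagram}. You simply spell out the details (including the observation that the non-periodicity hypothesis is automatic for $X\neq\emptyset$) that the paper leaves implicit.
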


\begin{athm}
\label{ateo:2.17nou}
Let $A$ be a Dedekind left brace such that $A = \Soc_2(A)$ and the additive group of $A$ is not periodic. Then $A$ is embedded into a direct sum $T \oplus D$ where
\begin{enumerate}
\item $T$ is a Dedekind left brace whose  additive group is periodic,
\item the additive group of $T\ast T$ is locally cyclic;
\item $\zeta(T)$ includes $T\ast T$;
\item $D$ is abelian.
\end{enumerate}

\end{athm}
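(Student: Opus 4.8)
The plan is to first understand the two-step socle structure very concretely. Write $S = \Soc(A)$, so that $S$ is an abelian ideal of $A$ and $A/S = \Soc(A/S)$, i.e.\ $A \ast A \subseteq S$ and $S \ast A = A \ast S = 0$. Thus $A^{(3)} = 0$ and $A$ is right nilpotent of class~$2$; in particular the multiplication on $A$ is governed by the biadditive map $\ast\colon A \times A \to S$, which by $S \ast A = 0$ factors through $(A/S) \times (A/S) \to S$. The first task is to show that $A \ast A$ has \emph{locally cyclic} additive group. Here the Dedekind hypothesis enters: for any $a \in A$, the subgroup $\langle a \rangle_+$ generates a subbrace whose additive part contains $a \ast a \in A \ast A$, and because every subbrace is an ideal, $\langle a \rangle \ast A \subseteq \langle a \rangle$; combined with the results of~\cite{BallesterEstebanKurdachenkoPerezC-dedekind-leftbraces} on the periodic case applied to the torsion part, and with Lemma~\ref{lema:a·a=0ciclic} where $a \ast a = 0$, one forces the image $a \ast A$ to sit inside a cyclic subgroup of $S$. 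Running this over a generating set and using biadditivity of $\ast$, together with the fact that a torsion abelian group all of whose $2$-generated subgroups are cyclic is locally cyclic, yields that $(A \ast A, +)$ is locally cyclic — and one checks along the way that $A \ast A$ is periodic even though $(A,+)$ need not be, since a non-periodic contribution to $A \ast A$ would, via the infinite-cyclic analysis of Proposition~\ref{prop:dedekindbraces-Z}, violate Dedekindness.

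Next I would produce the periodic Dedekind brace $T$. The natural candidate is the subbrace $T = \langle t \in A \mid t \text{ has finite additive order}\rangle$ — or, more robustly, a subbrace containing the additive torsion subgroup $\tau(A)$ together with $A \ast A$ — and then show $(T,+)$ is periodic. Since $A \ast A$ is periodic by the previous step and $\tau(A)$ is periodic, the only issue is closure: $T$ must be a subbrace, so one needs $t_1 t_2 = t_1 + \lambda_{t_1}(t_2) = t_1 + t_2 + t_1 \ast t_2$ to stay periodic for $t_1,t_2 \in T$, which holds because $t_1 \ast t_2 \in A \ast A$ is periodic. That $T$ is Dedekind is automatic (inherited by subbraces). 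The claim $\zeta(T) \supseteq T \ast T$ should follow because $T \ast T \subseteq A \ast A \subseteq S = \Soc(A)$ gives $(T\ast T) \ast T = 0$ (left annihilator condition), and $T \ast (T \ast T) = 0$ since $A^{(3)} = 0$ forces $A \ast (A \ast A) = 0$; one then also needs the \emph{right} centrality $t \cdot z = z \cdot t$ for $z \in T \ast T$, which follows from $z \ast t = 0 = t \ast z$ by writing both products via $\lambda$. Also, $T \ast T$ being locally cyclic is inherited from $A \ast A$.

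For $D$ I would pass to a quotient or complement capturing the torsion-free direction: set $\bar A = A / \tau(A)$ or work with $A/(A\ast A)$, observe it is torsion-free as an additive group, and argue it is \emph{abelian} because a torsion-free Dedekind brace has all its star products in a periodic group that is now trivial — more precisely, $A \ast A$ maps to zero in any torsion-free quotient, so the quotient is abelian; then take $D$ to be (an abelian brace isomorphic to) this torsion-free quotient. The embedding $A \hookrightarrow T \oplus D$ is then assembled from the two structure-preserving maps $A \to D$ (quotient) and a carefully chosen map $A \to T$: the point is that $\tau(A) \cap \ker(A \to D)$-type considerations, together with injectivity on the torsion part, make the diagonal map $a \mapsto (\pi_T(a), \pi_D(a))$ injective. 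The \textbf{main obstacle} I anticipate is precisely constructing the map $A \to T$ — i.e.\ splitting off, or at least embedding compatibly, the periodic ``core'' $T$ from a brace whose additive group is a mixed (non-periodic) abelian group; additive splitting of $\tau(A)$ need not exist, so one must either choose $T$ large enough (e.g.\ the preimage of the torsion subgroup of $A/(A \ast A)$) that $A/T$ is torsion-free, and then show $A$ embeds in $T \oplus (A/T)$ using that the star operation is ``trapped'' inside $T$ — making the obstruction to the direct-sum decomposition purely additive and hence resolvable by a standard abelian-group argument once one knows $A \ast A \subseteq T$ and $A/T$ is torsion-free abelian.
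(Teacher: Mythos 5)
Your proposal has two genuine gaps, one of them fatal to the construction of the embedding. The decisive issue is the last step: you take $T$ to be a \emph{subbrace} of $A$ (generated by the additive torsion and $A\ast A$, or the preimage of the torsion of $A/(A\ast A)$) and $D$ a torsion-free \emph{quotient}, and then try to assemble an embedding $a \mapsto (\pi_T(a), \pi_D(a))$. But there is no brace homomorphism $\pi_T\colon A \to T$ onto a subbrace, and you cannot reduce the problem to ``a standard abelian-group argument'': a mixed abelian group need not split over, nor even embed compatibly into, (torsion) $\oplus$ (torsion-free quotient), and in any case the map would have to respect the multiplication. The paper avoids this entirely by making \emph{both} factors quotients: it first produces (Corollary~\ref{cor:2.13nou}, Corollary~\ref{cor:2.15nou}) a periodic locally cyclic abelian ideal $P \leq \zeta(A)$ with $A \ast A \leq P$ and $A/P$ abelian, then chooses a subbrace $B \leq \zeta(A)$ \emph{maximal} subject to $B \cap P = \{0\}$. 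Maximality forces $\zeta(A)/B$ to be periodic (an infinite-order coset would generate a subbrace meeting $(P+B)/B$ trivially, contradicting maximality), and since $A/\zeta(A)$ is already periodic by Proposition~\ref{prop:2.4nou}, the quotient $T := A/B$ is periodic with $T \ast T \leq (P+B)/B \leq \zeta(T)$ locally cyclic; the embedding $A \hookrightarrow A/B \oplus A/P$ is then immediate from $B \cap P = \{0\}$. This ``maximal complement to $P$ inside the centre'' trick is the idea your proposal is missing.

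The second gap is your justification of $T \ast T \leq \zeta(T)$. You claim $t \ast z = 0$ for $z \in A \ast A$ because ``$A^{(3)} = 0$ forces $A \ast (A \ast A) = 0$''; this conflates the right series with the left series. From $A = \Soc_2(A)$ one gets $A \ast A \leq \Soc(A)$, hence $(A\ast A) \ast A = A^{(3)} = 0$, but $A \ast (A \ast A) = 0$ is the statement that elements of $\Soc(A)$ are fixed by all $\lambda_a$, i.e.\ that $\Soc(A) = \zeta(A)$. This is precisely Proposition~\ref{prop:2.14nou} of the paper and requires real work under the non-periodicity hypothesis (Lemmas~\ref{lema:CcapAnn_torsionfree+kxasta+torsionfree}, \ref{lema:2.5nou} and~\ref{lema:2.6nou}, plus the lattice-theoretic fact that the only non-trivial power automorphism of an abelian group is inversion); it is false for general left braces with $A=\Soc_2(A)$. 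Relatedly, your sketch that $A \ast A$ is locally cyclic (``each $a \ast A$ sits in a cyclic subgroup, then use biadditivity'') skips the actual content, which is the comparability of the chains $\langle a\ast a\rangle_+$ established in Lemma~\ref{lema:2.9nou} and Proposition~\ref{prop:2.12nou}; that comparability is where the Dedekind hypothesis and the quotient argument $A/U$ are really used. The correct high-level intuitions are present (periodicity of $A\ast A$, locally cyclic image, $A/P$ abelian), but the two load-bearing steps --- centrality of $\Soc(A)$ and the subdirect embedding via a maximal $B$ with $B \cap P = 0$ --- are absent or incorrectly argued.
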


\subsection{Proof of Theorem~\ref{ateo:2.11nou}}
\label{subsec:proof1}

\begin{proof}[Proof of Theorem~\ref{ateo:2.11nou}]
We work towards a proof of $A \ast A = 0$. Let $a , b \in A$ arbitrary elements of $A$.  Since both $a \ast a = 0 = b \ast b$, Lemma~\ref{lema:a·a=0ciclic} shows that $\langle a \rangle_+, \langle b \rangle_+$ are  ideals of $A$. Assume that $a$ has infinite additive order. If $b$ has finite additive order, then $\langle a \rangle_+ \cap \langle b \rangle_+ = \{0\}$. Thus, $a \ast b = b \ast a  = 0$. Suppose now that $b$ has infinite additive order and $\langle a \rangle_+ \cap \langle b \rangle_+ \neq \{0\}$. It follows that $na = kb$ for some integers $n,k$. Thus, applying Equation~\eqref{ast_distesq}, it holds
\begin{align*}
0 = n(a \ast a) = a \ast (na) = a \ast (na) = a \ast (kb) = k(a \ast b)
\end{align*}
But $a \ast b \in \langle a \rangle_+$. Therefore, $a \ast b = 0$. Similarly, we also get $b \ast a = 0$.

Assume now that $a$ has finite additive order. If $b$ has infinite additive order, by the previous paragraph $a \ast b = b \ast a = 0$. Suppose that $b$ has also finite additive order. Since $(A,+)$ is not periodic, there exists $c\in A$ such that $c$ has infinite additive order. Therefore, $b+c$ has infinite additive order, and then
\[ 0 = a \ast (b+c) = a \ast b + a \ast c = a \ast b + 0 = a \ast b\]
Similarly, we also get $b\ast a = 0$. 
\end{proof}

\begin{proof}[Proof of Corollary~\ref{cor:twistsolutions}]
According to diagram~\eqref{eq:diagram}, $r(x,x) = (x,x)$ for every $x\in X$ implies that $x \ast x = 0$ in the left brace structure associated $G(X,r)$. Now, we can argue as in the previous theorem so that $x\ast y = 0$, for every $x, y \in X$. Again, by diagram~\eqref{eq:diagram}, this implies that $r(x,y) = (y,x)$ for every $x,y\in X$. As a consequence, $G(X,r)\cong \mathbb{Z}^X$ and $G(X,r)$ is abelian.
\end{proof}

\subsection{Proof of Theorem~\ref{ateo:Ahypersoc_torsionfreee->Aabelian}}

\label{subsec:proof2}

We work towards a proof of Theorem~\ref{ateo:Ahypersoc_torsionfreee->Aabelian}. In the whole subsection $A$ denotes a left brace, $S$ denotes $\Soc(A)$ and $C$ denotes $\Soc_2(A)$. First, we need the following two key lemmas.

\begin{lema}
\label{lema:dastd-nozero+Dededkind-><d>=<d>+<dastd>}
Suppose that $A$ is a Dedekind left brace and let  $d \in C$ such that $d \ast d \neq 0$. Then, $\langle d \rangle = \langle d \rangle_+ + \langle d\ast d \rangle_+$.  
\end{lema}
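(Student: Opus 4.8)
The goal is to show that for $d\in C=\Soc_2(A)$ with $d\ast d\neq 0$, the subbrace $\langle d\rangle$ generated by $d$ equals the additive subgroup $\langle d\rangle_+ + \langle d\ast d\rangle_+$. The plan is to prove two inclusions. For $\supseteq$, I first need that $d\ast d\in\langle d\rangle$: indeed $d\ast d = d\cdot d - 2d$, and since $d, d\cdot d\in\langle d\rangle_{\boldsymbol\cdot}\subseteq\langle d\rangle$ and $\langle d\rangle$ is an additive subgroup, $d\ast d\in\langle d\rangle$; hence $\langle d\rangle_+ + \langle d\ast d\rangle_+\subseteq\langle d\rangle$. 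The substance is the reverse inclusion: I must show the set $M:=\langle d\rangle_+ + \langle d\ast d\rangle_+$ is already a subbrace containing $d$, so that $\langle d\rangle\subseteq M$.

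\textbf{Key step: $M$ is multiplicatively closed.} Since $d\in C$, we have $d\ast d\in\Soc(A)=S$, because $d\ast d = \lambda_d(d)-d\in\Soc(A/\Soc(A))$-preimage considerations give $(d\ast d)\ast A = 0$ more precisely: $d + \Soc(A)$ lies in $\Soc(A/S)$, which forces $d\ast d\in S$ together with $A\ast(d\ast d)\subseteq$ lower terms — I would check carefully that $d\ast d\in S$ and hence $e:=d\ast d$ satisfies $e\ast a = 0 = a\ast e$ for all $a$ (using that $S\ast A\subseteq S$ and, since $S$ is an abelian ideal lying in the socle, elements of $S$ $\ast$-annihilate everything on the left, and being an ideal plus Dedekind may give the right side too — this needs the Dedekind hypothesis). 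Granting $e\in\zeta(A)$ or at least that $e$ is central in $(A,+)$ and $\lambda_e=\id$, the products $(md + ne)\cdot(m'd + n'e)$ can be expanded using $\lambda$: $\lambda_{md+ne} = \lambda_{md}$ since $\lambda_e=\id$, and $\lambda_{md}$ acts on $d$ giving $md + \lambda_{md}(d) = (m d)\cdot d$-type terms, which by Equation~\eqref{ast_prod} and induction on $m$ expand as an integer combination of $d$ and $d\ast d = e$. So every product of two elements of $M$ lands back in $M$; similarly inverses, since $(md+ne)^{-1}$ can be written using $-md$ plus a correction in $\langle e\rangle_+$. Thus $(M,\cdot)$ is a subgroup, $(M,+)$ is visibly a subgroup, so $M\leq A$, and since $d\in M$ we get $\langle d\rangle\subseteq M$.

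\textbf{Main obstacle.} The delicate point is establishing that $d\ast d$ is well-behaved — ideally that $\langle d\ast d\rangle_+$ is a (central) ideal with $\lambda_{d\ast d}=\id$, using only $d\in\Soc_2(A)$ and the Dedekind property; in particular showing $d\ast d\in\Soc(A)$ and controlling how $\lambda_d$ and powers of $d$ interact with $d\ast d$. Once $d\ast d\in\Soc(A)$ is known, the computation that $\langle d\rangle_+ + \langle d\ast d\rangle_+$ is closed under multiplication is a bookkeeping induction via~\eqref{ast_distesq} and~\eqref{ast_prod}: one shows by induction on $n$ that $d^{\,n}\in\langle d\rangle_+ + \langle d\ast d\rangle_+$ and, more generally, that $(nd)\ast d$ and $(nd)\cdot(md)$ lie in this set, the key identity being $(nd)\ast d = (n-1)$-fold use of $(ab)\ast c = a\ast(b\ast c) + b\ast c + a\ast c$ collapsing because $d\ast d$ is annihilated by everything. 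I would also use Lemma~\ref{lema:a·a=0ciclic}-style reasoning is \emph{not} available here since $d\ast d\neq 0$, so instead the finiteness of $\langle d\ast d\rangle_+$ (as $d\ast d\in\Soc(A)$, an abelian brace, but its additive order could still be infinite) must be handled generically; fortunately the argument above never needs finiteness, only that $d\ast d$ is additively central and $\lambda$-trivial.
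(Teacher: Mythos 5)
Your overall strategy---proving the reverse inclusion by showing directly that $M := \langle d\rangle_+ + \langle d\ast d\rangle_+$ is a subbrace containing $d$, the forward inclusion being immediate from $d\ast d = d^2-2d\in\langle d\rangle$---is viable and genuinely different from the paper's argument. The paper instead applies Lemma~\ref{lema:a·a=0ciclic} twice: once to $u:=d\ast d$, which satisfies $u\ast u=0$ because $u\in\Soc(A)$, so that $U:=\langle u\rangle=\langle u\rangle_+$ is a subbrace and hence (Dedekind) an ideal; and once to $d+U$ in the quotient $A/U$, where $(d+U)\ast(d+U)=U$, giving $\langle d+U\rangle=\langle d+U\rangle_+$ and hence $\langle d\rangle\leq\langle d\rangle_+ +U$ upon pulling back. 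This bypasses all of your bookkeeping. Note that you dismiss Lemma~\ref{lema:a·a=0ciclic} as ``not available'' because $d\ast d\neq 0$; it is unavailable for $d$, but it applies to the element $d\ast d$ itself, and that is exactly the leverage the paper exploits.

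The one concretely wrong assertion in your plan is that $e:=d\ast d$ satisfies $a\ast e=0$ for all $a\in A$ (your ``granting $e\in\zeta(A)$''). Membership in $\Soc(A)$ only gives $e\ast a=0$; the Dedekind hypothesis only gives $a\ast e\in\langle e\rangle_+$, and the paper's own Lemma~\ref{lema:CcapAnn_torsionfree+kxasta+torsionfree} shows that $\lambda_a(u)=-u$ genuinely occurs for $u\in\Soc(A)$, so $a\ast e=-2e\neq 0$ is possible. Fortunately your closure computations never truly need $a\ast e=0$: they need only that $\langle e\rangle_+$ is a $\lambda$-invariant ideal (so $x\ast f\in\langle e\rangle_+$ whenever $f\in\langle e\rangle_+$), that $\lambda_g=\id$ for $g\in\Soc(A)$ together with the $\lambda$-invariance of $\Soc(A)$ (so $\lambda_{md+ne}=\lambda_{md}$), and the identity $md=d^m s$ with $s\in\Soc(A)$ coming from the abelianity of $\Soc_2(A)/\Soc(A)$ (so $(md)\ast d=d^m\ast d\in\langle e\rangle_+$ by induction via~\eqref{ast_prod}). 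With ``annihilated by everything'' replaced throughout by ``lands back in the ideal $\langle e\rangle_+$'', your direct verification goes through, at the cost of being considerably longer than the paper's proof; the explicit integer coefficients you predict for $e$ would, however, be wrong in the case $\lambda_a(e)=-e$.
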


\begin{proof}
Since $d\in C$, we have that $d\ast a \in S$ for every $a\in A$. In particular, $0 \neq u := d\ast d \in S$. By Lemma~\ref{lema:a·a=0ciclic}, it holds $U:= \langle u \rangle = \langle u \rangle_+$. Since $U$ is an ideal, using again Lemma~\ref{lema:a·a=0ciclic}, it follows $\langle d+U\rangle = \langle d+ U\rangle_+ = \langle d \rangle_+ + U$. Therefore, $\langle d \rangle  \leq \langle d \rangle_+ + U$. The other inclusion is trivial.
\end{proof}

\begin{lema}
\label{lema:CcapAnn_torsionfree+kxasta+torsionfree}
$C \cap \Ann_A(S)$ is a subbrace containing $S$. Moreover, it holds:
\begin{enumerate}
\item $N:= \Ann_C^l(S) = C \cap \Ann_A(S)$ is a normal subgroup of $(C,\cdot)$ such that either $N = C$ or $|C/N| = 2$ and $\lambda_a(u) = -u$, for every $a \in C \setminus S$ and every $u \in S$.
\item if $x \in C\cap \Ann_A(S)$, then  $x^k\ast a = k(x \ast a) = (kx) \ast a$ for every $a \in A$ and every non-negative integer $k\in \mathbb{Z}$;
\item if $(S,+)$ is torsion free, then $\big((C\cap \Ann_A(S))/S, +\big)$ is torsion-free.
\end{enumerate}
\end{lema}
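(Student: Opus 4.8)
The plan is to establish, in order: two preliminary remarks on $S=\Soc(A)$, the fact that $N:=\Ann_C^l(S)$ is a subbrace containing $S$, the dichotomy in~(1), and then~(2) and~(3), which come out quickly from the preliminaries and from~(2) respectively.

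The two remarks are these. Since every $s\in S$ satisfies $\lambda_s=\id$ (this is exactly what $s\ast a=0$ for all $a$ means), one gets $s+a=sa$ for all $a\in A$ and hence, via \eqref{ast_prod}, $(s+a)\ast x=a\ast x$ for all $a,x\in A$; in particular the right-hand condition in the definition of $\Ann_A(S)$ is automatic, so $\Ann_A^l(S)=\Ann_A(S)$ and therefore $N=\Ann_C^l(S)=C\cap\Ann_A^l(S)=C\cap\Ann_A(S)$. Secondly, for $c,c'\in C$ we have $c\ast c'\in S$, so $cc'=(c+c')+(c\ast c')$ differs additively from $c+c'$ by an element of $S$, and the first remark then yields the identity $(cc')\ast x=(c+c')\ast x=c\ast(c'\ast x)+c'\ast x+c\ast x$ for all $c,c'\in C$ and $x\in S$. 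From this identity, if $c,c'\in N$ then $(c\pm c')\ast x=0$ for every $x\in S$, so $(N,+)\le(A,+)$; together with $(N,\cdot)=(C,\cdot)\cap(\Ann_A^l(S),\cdot)$, which is normal in $(C,\cdot)$ by Lemma~\ref{lema:left-right-ann_substruct}, this shows that $N$ is a subbrace, normal in $(C,\cdot)$, and it contains $S$ because $s\ast x=0$ whenever $s,x\in S$.

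For the dichotomy, consider the homomorphism $\varphi\colon(C,\cdot)\to\Aut(S,+)$ given by $c\mapsto\lambda_c|_S$ (well defined since $S$ is a $\lambda$-invariant ideal), whose kernel is precisely $N$. Because $A$ is Dedekind and $x\ast x=0$ for $x\in S$, Lemma~\ref{lema:a·a=0ciclic} makes each $\langle x\rangle_+$ an ideal, so every $\lambda_c|_S$ preserves all cyclic subgroups of $S$, i.e.\ it is a power automorphism. The heart of the proof --- and the step I expect to be the main obstacle --- is to show that the only power automorphisms in the image of $\varphi$ are $\id$ and $-\id$, and that $N\neq C$ forces $N=S$. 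Here the plan is to assume $\lambda_c(u)=tu$ with $tu\neq\pm u$ for some $c\in C$ and $u\in S$ (which is then of finite additive order) and to derive a contradiction by playing off the ideal property of $\langle c\rangle$, of $\langle u\rangle_+$, and of subbraces built from $c$ and $u$ --- controlled by Lemma~\ref{lema:dastd-nozero+Dededkind-><d>=<d>+<dastd>} when $c\ast c\neq0$, and by the relation $u^{-1}cu=c+(t-1)u$ in general --- together with non-periodicity of $(A,+)$ to supply infinite-order witnesses: for instance, an element $c\in C$ of infinite additive order with $c\ast c=0$ already lies in $N$, because then $\langle c\rangle=\langle c\rangle_+\cong\mathbb{Z}$ and the ideal property of $\langle c\rangle$ and of the $\langle u\rangle_+$ forces $\lambda_c|_S=\id$; this reduces matters to the torsion case. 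Granting $\varphi(C)\subseteq\{\id,-\id\}$ we obtain $|C/N|=|\varphi(C)|\le2$; if $N\neq C$ then $\varphi(C)=\{\id,-\id\}$, so $|C/N|=2$ and $\lambda_c|_S=-\id$ for every $c\notin N$, and since $2S=0$ would collapse $\varphi(C)$ to $\{\id\}$ we must have $2S\neq0$; finally the argument identifying $N$ with $S$ in this case upgrades "$c\notin N$" to "$c\in C\setminus S$", which is~(1).

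Part~(2) is then short. If $x\in N$ then $\lambda_x|_S=\id$, and $x\in C$ gives $x\ast a\in S$ for every $a\in A$, hence $\lambda_x(x\ast a)=x\ast a$, i.e.\ $x\ast(x\ast a)=0$. Feeding this into \eqref{ast_prod} gives $x^k\ast a=x\ast(x^{k-1}\ast a)+x^{k-1}\ast a+x\ast a$ with vanishing first summand (because $x^{k-1}\ast a\in S$), so an induction on $k$ yields $x^k\ast a=k(x\ast a)$. On the other hand every correction term picked up in passing from $kx$ to $x^k$ lies in $S$, so $x^k=kx+s$ with $s\in S$, and then $(kx)\ast a=(x^k+(-s))\ast a=x^k\ast a$ by the identity $(s'+a')\ast x=a'\ast x$; comparing gives $x^k\ast a=k(x\ast a)=(kx)\ast a$. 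For~(3), suppose $(S,+)$ is torsion-free and take $x\in N$ with $kx\in S$ for some positive integer $k$; since $kx\in S=\Soc(A)$ we have $(kx)\ast a=0$ for all $a$, so $k(x\ast a)=0$ by~(2), and as $x\ast a\in S$ and $(S,+)$ is torsion-free this forces $x\ast a=0$ for all $a\in A$, i.e.\ $x\in\Soc(A)=S$. Hence $\big((C\cap\Ann_A(S))/S,+\big)=(N/S,+)$ has no nonzero element of finite additive order, which is~(3).
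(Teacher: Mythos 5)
Your treatment of the preliminary claim and of parts (2) and (3) is correct and essentially the same as the paper's: the identity $\Ann_A^l(S)=\Ann_A(S)$, the normality of $(N,\cdot)$ via Lemma~\ref{lema:left-right-ann_substruct}, the additive closure of $N$ (the paper instead deduces it from the fact that sums and products coincide in the abelian quotient $C/S$, but your direct computation with $(cc')\ast x=(c+c')\ast x$ is equivalent), the induction giving $x^k\ast a=k(x\ast a)$ from $x\ast(x\ast a)=0$, the passage from $x^k$ to $kx$ through an element of $S$, and the deduction of (3) from (2) all match the paper's proof.

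The genuine gap is in part (1). You correctly reduce the dichotomy to the statement that every $\lambda_c|_S$ is a power automorphism of $(S,+)$ (since every subgroup of the abelian ideal $S$ is a subbrace, hence an ideal, hence $\lambda$-invariant) and that only $\pm\id$ can occur --- but you then stop: you label this "the main obstacle" and offer only a plan involving the ideals $\langle c\rangle$ and $\langle u\rangle_+$, the relation $u^{-1}cu=c+(t-1)u$, and "non-periodicity of $(A,+)$". This is not a proof, and as sketched it is doubtful: the lemma's hypotheses do not include non-periodicity of $(A,+)$, and what is actually needed is an element of infinite additive order in $S$ itself, since for a periodic abelian group (e.g.\ $S\cong\mathbb{Z}/5\mathbb{Z}$) the power automorphism group is strictly larger than $\{\pm\id\}$ and the dichotomy cannot follow from the power-automorphism property alone. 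The paper closes exactly this step in one stroke by invoking Theorem 1.5.7 of \cite{Schmidt94}, the classification of power automorphisms of abelian groups, which yields $\bar\lambda(C)\subseteq\{\id,-\id\}$ and hence $N=C$ or $|C/N|=2$ with $\lambda_a|_S=-\id$ off $N$. To repair your argument you should either cite that classification or actually carry out the computation you outline (and likewise supply the argument, which you also only announce, that in the second case $N=S$, so that "$a\in C\setminus N$" can be upgraded to "$a\in C\setminus S$" as in the statement).
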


\begin{proof}
By definition of $S = \Ann_A^l(A)$, it holds $\Ann_A^l(S) = \Ann_A(S)$ and $S \subseteq \Ann_A(S)$. By Lemma~\ref{lema:left-right-ann_substruct},  it follows $(\Ann_A(S), \cdot) \unlhd (A,\cdot)$, and therefore, $(C\cap \Ann_A(S), \cdot) \unlhd (A,\cdot)$ because $C$ is an ideal. Since $C/S$ is also abelian,  products and sums coincide in $C/S$, which yields that  $(C\cap \Ann_A(S))/S$ is a subbrace of $C/S$. Hence,  $C\cap \Ann_A(S)$ is a subbrace of $A$.

1. Clearly, $\Ann_C^l(S) = C \cap \Ann_A(S)$, since $S = \Ann_A^l(A)$. Thus, $N:= \Ann_C^l(S) = \Ker\bar{\lambda}$, where 
\[ \bar{\lambda}\colon a \in C \mapsto \left.\lambda_a\right|_S \in \Aut(S,+)\]
is a restriction of the $\lambda$-action to the ideal $S$. Therefore, $(C/N,\cdot)$ is isomorphic to a subgroup of $\Aut(S,+)$. Since $S$ is abelian, each subgroup of $S$ is a subbrace of $A$ and, therefore, an ideal. Hence, all subgroups of $(S,+)$ are $\bar{\lambda}$-invariant. Applying Theorem 1.5.7 of \cite{Schmidt94} we get that either $C= N$ or  $|C/N| = 2$ and $\lambda_a(u) = -u$, for every  $u\in S$ and every $a\in C\setminus S$. 

2. Let $x\in C\cap \Ann_A(S)$. Thus, $x \ast a \in S$ and $x \ast (x \ast a) = 0$. By induction, it follows that $x^k\ast a = k(x\ast a)$ for each non-negative integer $k$. Indeed, if $x^k \ast a = k(x \ast a)$ for some $k \geq 1$, then using equations~\eqref{ast_prod} and~\eqref{ast_distesq}, it holds
\begin{align*}
x^{k+1} \ast a & = (xx^k)\ast a = x\ast (x^k\ast a) + x^k \ast a + x \ast a = \\
& = x \ast \big(k(x \ast a)\big) + (k+1)(x \ast a) = k\big(x \ast (x \ast a)\big) + (k+1)(x\ast a) =\\
& = (k+1)(x\ast a).
\end{align*}
Moreover, there exists $s \in S$ such that $x^ks = kx$, as $x^kS = kxS$. Thus,
\[ (kx) \ast a = (x^ks) \ast a = x^k \ast (s \ast a) + s \ast a + x^k \ast a = x^k \ast a \]

3. Assume that there exists  $x\in C \cap \Ann_A^l(S) \setminus S$ and a positive integer $m$ such that $mx \in S$. Since $x\notin S$, there exists $a\in A$ such that $x \ast a \in S \setminus\{0\}$. By the previous paragraph, $m(x \ast a) = x^m \ast a = (mx) \ast a = 0$, which is a contradiction as $(S,+)$ is torsion-free.
\end{proof}

\begin{proof}[Proof of Theorem~\ref{ateo:Ahypersoc_torsionfreee->Aabelian}]
Assume that $A \neq S$. Since $A$ is hypermultipermutational, it follows that $C\neq S$.

Assume that $C \cap \Ann_A(S) \neq S$. Choose an element $d \in C \cap \Ann_A(S)$ such that $d \notin S$, and set $D = \langle d \rangle$. Since $d \in C$ and $D$ is an ideal by hypothesis, it holds $d\ast x \in D \cap S$ for every $x \in A$. If $D \cap S = 0$, then $d \in S$, a contradiction. Suppose that $D\cap S \neq 0$. If $d \ast d  = 0$, by Lemma~\ref{lema:a·a=0ciclic}, $D = \langle d \rangle_+$. Then, there exists a positive integer $k$ such that $kd \in S$. This contradicts the fact that $\big((C \cap \Ann_A(S))/S,+\big)$ is torsion free by Lemma~\ref{lema:CcapAnn_torsionfree+kxasta+torsionfree}. 

If $d \ast d \neq 0$, then by Lemma~\ref{lema:dastd-nozero+Dededkind-><d>=<d>+<dastd>}, $\langle d \rangle = \langle d \rangle_+ + \langle d \ast d \rangle_+$. Moreover, since $\langle d \ast d \rangle_+ \leq S$ and $\big((C\cap \Ann_A(S))/S,+\big)$ is torsion-free by Lemma~\ref{lema:CcapAnn_torsionfree+kxasta+torsionfree}, it follows that $\langle d \rangle_+ \cap \langle d \ast d \rangle_+ = 0$. Hence, $\langle d \rangle = \langle d \rangle_+ \oplus \langle d \ast d \rangle_+$. Take $c := 2d \in \langle d \rangle$. Then, $d \ast c = d \ast (2d) = 2(d \ast d)$. If $c \ast c = 0$, then $\langle c \rangle = \langle c\rangle_+ = \langle 2d\rangle_+$ by Lemma~\ref{lema:a·a=0ciclic}. Thus, $d \ast c\notin \langle c\rangle$, by the previous paragraph. If $c \ast c \neq 0$, then by Lemma~\ref{lema:CcapAnn_torsionfree+kxasta+torsionfree}, it holds
\[ c\ast c = (2d) \ast (2d) = 2((2d) \ast d) = 2((d^2) \ast d) = 4(d\ast d).\]
By the previous paragraph, $\langle c \rangle = \langle c \rangle_+\oplus \langle c\ast c \rangle_+ = \langle 2d\rangle_+ + \langle 4(d\ast d)\rangle_+$. Thus, $d\ast c \notin \langle c \rangle$. In both cases, it follows that $\langle c \rangle$ is not an ideal, which is a contradiction. Hence, $C\cap \Ann_A(S) = S$. 

Now, by Lemma~\ref{lema:CcapAnn_torsionfree+kxasta+torsionfree}, $|C/S| = 2$ and $\lambda_a(u) = -u$, for every  $u\in S$ and every $a\in C\setminus S$. Observe that the unique left brace up to isomorphism of order $2$ is abelian isomorphic to a cyclic group of order~$2$. In particular, if $a \in C \setminus S$, then $a^2 \in S$ and  $-a^2 = \lambda_a(a^2) = aa^2 - a = a^3 - a$.  Since $a^2 \in S$, it follows that $a = a^3 + a^2 = a^2 + a^3 = a^2a^3 = a^5$. Thus,  $a^4 = 0$. Again, using $a^2 \in S$, it holds $0 = a^4 = a^2a^2 = a^2 + a^2 = 2a^2$, which is a contradiction as $(S,+)$ is torsion free. 

This contradiction yields $A = S$, and therefore, $A$ is abelian.
\end{proof}

\begin{proof}[Proof of Corollary~\ref{cor:mult+dedekind->twist}]
We can apply Theorem~\ref{ateo:Ahypersoc_torsionfreee->Aabelian} to $G(X,r)$ as it is torsion-free. Therefore, $G(X,r)$ is abelian, which means that $\lambda_x = \rho_x = \id_X$ for every $x\in X$.
\end{proof}

\subsection{Proof of Theorem~\ref{ateo:2.17nou}}

\label{subsec:proof3}

In this subsection Equations~\eqref{ast_distesq} and~\eqref{ast_prod} are used all over the proofs. Lemma~\ref{lema:dastd-nozero+Dededkind-><d>=<d>+<dastd>} of previous subsection play also a key role.

We split the proof of Theorem~\ref{ateo:2.17nou} in different steps. Our first goal is to show that for Dedekind left braces the additive group of the quotient 
between the second and the first non-zero term of the socle series is periodic (see Proposition~\ref{prop:2.4nou}). To this aim, the following lemmas deal with additive orders of elements in a left brace~$A$.

\begin{lema}
\label{lema:infiniteorder->subbridanoideal}
Let $a\in A$ with infinite additive order. Suppose that $0 \neq c:= a \ast a$ satisfies $c \ast c = a \ast c = c \ast a = 0$. If $c$ has infinite additive order, then $\langle a \rangle = \langle a \rangle_+ \oplus \langle c \rangle_+$ and it includes a subbrace which is not an ideal.
\end{lema}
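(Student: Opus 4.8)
The plan is to first establish the direct-sum decomposition $\langle a\rangle = \langle a\rangle_+ \oplus \langle c\rangle_+$ and then exhibit an explicit subbrace of $\langle a\rangle$ that fails to be an ideal. For the decomposition: since $c\ast c = 0$, Lemma~\ref{lema:a·a=0ciclic} gives $\langle c\rangle = \langle c\rangle_+ = \langle c\rangle_{\boldsymbol\cdot}$, an abelian subbrace. Using the hypotheses $a\ast c = c\ast a = 0$ together with Equations~\eqref{ast_distesq} and~\eqref{ast_prod}, one checks that $\langle a\rangle_+ + \langle c\rangle_+$ is closed under the multiplicative operation: products $a\cdot c = a + \lambda_a(c) = a + c + (a\ast c) = a+c$ and $c\cdot a = c+a$, and more generally any product of powers of $a$ and $c$ lands in $\langle a\rangle_+ + \langle c\rangle_+$ after expanding via the $\lambda$-action and the $\ast$-identities. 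Hence $\langle a\rangle \leq \langle a\rangle_+ + \langle c\rangle_+$, and the reverse inclusion is clear since $c = a\ast a = a\cdot a - 2a \in \langle a\rangle$. The sum is direct because $a$ and $c$ both have infinite additive order: if $na = kc$ for some integers $n,k$ not both zero, then applying $a\ast(\cdot)$ and using $a\ast a = c$, $a\ast c = 0$ forces $nc = a\ast(na) = a\ast(kc) = k(a\ast c) = 0$, so $n = 0$ (as $c$ has infinite order), and then $kc = 0$ gives $k = 0$.

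Next I would locate the non-ideal subbrace. The natural candidate is $\langle 2a\rangle$ (or more generally $\langle ma\rangle$ for a suitable integer $m \geq 2$). Compute $(2a)\ast(2a)$: by repeated use of~\eqref{ast_prod} and~\eqref{ast_distesq} together with $a\ast a = c$ and $a\ast c = c\ast a = c\ast c = 0$, one gets $(2a)\ast(2a) = (a\cdot a)\ast(2a) = a\ast(a\ast 2a) + a\ast 2a + a\ast 2a = a\ast(2(a\ast a)) + 2(a\ast a) = 2(a\ast c) + 2c = 2c$. Since $2c \neq 0$ (as $c$ has infinite additive order), $(2a)\ast(2a)\neq 0$, so by Lemma~\ref{lema:dastd-nozero+Dededkind-><d>=<d>+<dastd>} — provided $2a$ lies in a suitable socle term so that lemma applies — we would have $\langle 2a\rangle = \langle 2a\rangle_+ + \langle (2a)\ast(2a)\rangle_+ = \langle 2a\rangle_+ + \langle 2c\rangle_+$. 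But now $a\ast(2a) = 2(a\ast a) = 2c$, while $a\ast a = c \notin \langle 2a\rangle_+ + \langle 2c\rangle_+$ by the directness of the decomposition established above (the $\langle c\rangle_+$-component of any element of this subbrace lies in $\langle 2c\rangle_+$, and $c\notin\langle 2c\rangle_+$ since $c$ has infinite order). Hence $\langle 2a\rangle$ is not closed under $\ast$ with $A$, so it is not an ideal.

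The main obstacle I anticipate is the applicability of Lemma~\ref{lema:dastd-nozero+Dededkind-><d>=<d>+<dastd>}, which requires the relevant element to lie in $C = \Soc_2(A)$ and the brace to be Dedekind; since the present lemma is stated for a general left brace $A$, I expect the actual argument either to carry the Dedekind/socle hypotheses implicitly from the ambient context of Section~\ref{subsec:proof3}, or to replace the appeal to that lemma by a direct computation of $\langle 2a\rangle$ as a subbrace using only the $\ast$-relations among $a$ and $c$ — essentially redoing the closure computation of the first paragraph with $2a$ in place of $a$ and noting $(2a)\ast c = c\ast(2a) = 0$, $(2a)\ast(2a) = 2c$, $(2c)\ast(2c) = 0$. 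The bookkeeping of which multiplicative words reduce into $\langle a\rangle_+ + \langle c\rangle_+$ is the one genuinely fiddly point; everything else is a direct application of the two displayed $\ast$-identities and Lemma~\ref{lema:a·a=0ciclic}.
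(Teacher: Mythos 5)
Your first paragraph is fine, and your argument for directness (apply $a\ast(\cdot)$ to a relation $na=kc$) is in fact slicker than the paper's, which goes through a case analysis on the $0$-rank of $\langle a\rangle_+ + \langle c\rangle_+$. The second half, however, contains a genuine error that breaks the conclusion. The correct value is $(2a)\ast(2a)=4c$, not $2c$: in your own chain, $a\ast(2a)+a\ast(2a)=2\bigl(a\ast(2a)\bigr)=4(a\ast a)=4c$, so you dropped a factor of $2$. (The preceding equality $(2a)\ast(2a)=(a\cdot a)\ast(2a)$ also needs justification, since $a\cdot a=2a+c\neq 2a$; it happens to hold because $a^2=(2a)\cdot c$ and $\lambda_c$ is the identity on $\langle a\rangle$, but it is not automatic.) The factor of $2$ matters: with your value $2c$ the proposed subbrace is $\langle 2a\rangle_+ + \langle 2c\rangle_+$, which \emph{contains} the natural witness $a\ast(2a)=2c$, and that is exactly why you are forced to retreat to the observation that $a\ast a=c\notin\langle 2a\rangle$. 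That observation proves nothing: since $a\notin\langle 2a\rangle$, the element $a\ast a$ is not of the form $x\ast y$ with $x$ or $y$ in $\langle 2a\rangle$, so it is irrelevant to whether $\langle 2a\rangle$ absorbs star products. As written, your proof does not establish that $\langle 2a\rangle$ fails to be an ideal.

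The candidate $\langle 2a\rangle$ is nevertheless salvageable, and the obstacle you flag about Lemma~\ref{lema:dastd-nozero+Dededkind-><d>=<d>+<dastd>} is real: that lemma is unavailable here (no Dedekind hypothesis, no socle membership) and must be replaced by a direct computation, exactly as you suspected. With the corrected value one checks $(2a)\ast c=c\ast(2a)=(4c)\ast(4c)=0$ and $0\neq(2a)\ast(2a)=4c$ of infinite additive order, so the first part of the lemma applied to $2a$ in place of $a$ yields $\langle 2a\rangle=\langle 2a\rangle_+\oplus\langle 4c\rangle_+=\{2ma+4jc\mid m,j\in\mathbb{Z}\}$; then $a\ast(2a)=2c\notin\langle 2a\rangle$ settles non-ideality. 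The paper's own route is the same in spirit but uses $w:=a^2=2a+c$: it computes $w\ast w=4c$ together with $w\ast(4c)=(4c)\ast w=(4c)\ast(4c)=0$, reapplies the first half of the proof to $w$ to get $\langle w\rangle=\langle w\rangle_+\oplus\langle 4c\rangle_+$, and checks $a\ast w=2c\notin\langle w\rangle$.
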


\begin{proof} 
From the hypothesis, it is not hard to show that $\langle a \rangle = \langle a \rangle_+ + \langle c \rangle_+$. 

Suppose that  $\{0\} \neq \langle a \rangle_+ \cap \langle c \rangle_+$. If $\langle c \rangle_+ \leq \langle a \rangle_+$, then $c = ta$ for some positive integer $t$. Thus, $tc = t(a \ast a) = a \ast (ta) = 0$, which is not possible. Similarly, if $\langle a \rangle_+ \leq \langle c \rangle_+$, we arrive to $a \ast a = 0$, a contradiction. Therefore, $\langle a \rangle_+ \neq \langle a \rangle_+ + \langle c \rangle_+ \neq \langle c \rangle_+$. Since $\{0\} \neq \langle a \rangle_+ \cap \langle c \rangle_+$, the $0$-rank of $\langle a \rangle_+ + \langle c \rangle_+$ is $1$, that is $ \langle a \rangle = \langle a \rangle_+ + \langle c \rangle_+ = \langle d \rangle_+ \oplus \langle u \rangle_+$ where $d$ has infinite additive order and $u$ has finite additive order $n$. Let $k, j \in \mathbb{Z}$ such that  $u = ka + jc$. It holds
\[ a \ast u = a \ast (ka+jc) = a \ast (ka) + a \ast (jc) = a \ast (ka) = k(a\ast a) = kc.\]
It follows $0 = a \ast 0 = a \ast(nu) = n(a\ast u) = nkc$, a contradiction. 
which contradicts the additive order of $c$. Hence, $\{0\} = \langle a \rangle_+ \cap \langle c \rangle_+$, and then, $\langle a \rangle = \langle a \rangle_+ \oplus \langle c \rangle_+$. 

Take $w := a^2 = 2a + a \ast a = 2a + c \in \langle a \rangle$, which has infinite additive order. It follows
\begin{align*}
c_1:= w\ast w & = a^2 \ast (2a + c) = 2(a^2 \ast a) + a^2 \ast c =\\
& = 2\big(a \ast (a \ast a) + 2(a \ast a)\big) + 0  =  4(a \ast a) = 4c.
\end{align*}
Moreover, since $c\ast c = 0$, it holds $4c = c^4$. Thus,  $c_1 \ast c_1 = c^4 \ast c^4 = 0$, $w\ast c_1 = a^2 \ast (4c) = 0$, and $c_1 \ast w = c^4\ast a^2 = 0$ hold. Therefore, by the previous paragraph, we can also conclude that $\langle w \rangle = \langle w \rangle_+ \oplus \langle c_1 \rangle_+$, so that every element of $\langle w\rangle$ can be written uniquely as 
\[ t_1w + t_2c_1 = t_1a^2 + t_24c = t_1(2a + c) + t_24c = 2t_1a + (4t_2 + t_1)c,\]
for certain integers $t_1, t_2$. 

Now, observe that $a\ast w = a \ast (a^2) = a \ast (2a + c) = 2(a \ast a) = 2c$, i.e.  $a\ast w \notin\langle w \rangle$. Hence, it follows that $\langle w \rangle$ is not an ideal. 
\end{proof}

\begin{lema}
\label{lema:dinSoc2+order-inf+dastd=0->dinS}
Let $A$ be a Dedekind left brace and let $d\in \Soc_2(A)$ with infinite additive order. If $d \ast d = 0$, then $d \in \Soc(A)$.
\end{lema}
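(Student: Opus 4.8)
The plan is to argue by contradiction: assume $d \notin S := \Soc(A)$ and exploit the rigidity that $d \ast d = 0$ imposes on $\langle d \rangle$.

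First I would set up the basic structure. Since $d \ast d = 0$, Lemma~\ref{lema:a·a=0ciclic} gives that $D := \langle d \rangle = \langle d \rangle_+$ is an abelian subbrace, and $(D,+)$ is infinite cyclic because $d$ has infinite additive order; in particular $(D,+)$ is torsion-free. As $A$ is Dedekind, $D$ is an ideal of $A$, so $d \ast a \in D$ for every $a \in A$; since moreover $D \subseteq \Soc_2(A)$ and $\Soc_2(A)/S = \Soc(A/S)$, we also have $d \ast a \in S$ for every $a \in A$. Thus $d \ast a \in D \cap S$ for all $a \in A$, and in particular $d \ast a$ always lies in the torsion-free group $(D,+)$.

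The key step is the identity $(nd) \ast x = n(d \ast x)$ for all $n \in \mathbb{Z}$ and all $x \in A$. Since $D$ is an abelian subbrace, every $\ast$-product of two elements of $D$ is $0$; and since $D$ is an ideal, $d^{n-1} \ast x \in D$. Hence, writing $nd = d^n = d \cdot d^{n-1}$ inside $D$ and using Equation~\eqref{ast_prod}, the term $d \ast (d^{n-1}\ast x)$ vanishes and we get $d^n \ast x = d^{n-1}\ast x + d \ast x$; an easy induction (with the non-positive values of $n$ handled via $0 = (d^n d^{-n})\ast x$ together with Equation~\eqref{ast_distesq}) then yields the claim. Finally, suppose $d \notin S$. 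Then there is $a \in A$ with $u := d \ast a \neq 0$; as $u \in D \cap S$ and $(D,+)$ is infinite cyclic, $u = md$ for some nonzero integer $m$. Since $u \in S = \Ann_A^l(A)$, for every $x \in A$ we get $0 = u \ast x = (md)\ast x = m(d \ast x)$, and torsion-freeness of $(D,+)$ forces $d \ast x = 0$. As $x$ was arbitrary, $d \in \Ann_A^l(A) = S$, contradicting $d \notin S$; hence $d \in S$.

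This argument is short, so there is no deep obstacle; the only point requiring care is the bookkeeping in the identity $(nd)\ast x = n(d\ast x)$ — one must check that every intermediate $\ast$-product invoked genuinely lies in $D$ so that abelianity of $D$ applies, and treat the non-positive values of $n$ separately.
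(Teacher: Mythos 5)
Your proof is correct, and it takes a genuinely different route from the paper's. The paper also begins with $D:=\langle d\rangle=\langle d\rangle_+$ being an ideal, but then exploits that $(D,+)\cong\mathbb{Z}$ forces conjugation by any $x\in A$ and every $\lambda_x$ to act on $D$ as $\pm\id$, and runs a four-way case analysis ($xd=dx$ or $dx=x(-d)$, crossed with $\lambda_x(d)=\pm d$), reaching contradictions such as $4d=0$ or $2(d\ast x)=0$ in the bad cases. You instead note that every value $d\ast a$ lies in $D\cap\Soc(A)$, establish the $\mathbb{Z}$-linearity $(nd)\ast x=n(d\ast x)$ — which is legitimate: the nested term $d\ast(d^{n-1}\ast x)$ in Equation~\eqref{ast_prod} vanishes because $d^{n-1}\ast x\in D\ast A\subseteq D$ and $D$ is an abelian subbrace, and negative exponents follow from $0\ast x=0$ — and then transfer the annihilation property of the socle element $u=d\ast a=md$ back to $d$ using torsion-freeness of $(D,+)$. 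Your argument is shorter, avoids the case analysis entirely, and makes the role of the hypothesis $d\in\Soc_2(A)$ transparent (it is exactly what places $d\ast a$ in $\Soc(A)$); the paper's version relies on the normality/$\lambda$-invariance dichotomy that recurs in its subsequent lemmas (e.g.\ Lemma~\ref{lema:2.3nou} and Lemma~\ref{lema:2.5nou}), so it is stylistically of a piece with the rest of the section. Note that your linearity identity is essentially the computation of part~2 of Lemma~\ref{lema:CcapAnn_torsionfree+kxasta+torsionfree}, but with a different justification for the vanishing of the nested star product. Both proofs use the Dedekind hypothesis only through $\langle d\rangle$ being an ideal.
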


\begin{proof}
By Lemma~\ref{lema:a·a=0ciclic}, it holds $\langle d \rangle = \langle d \rangle_+$. Since it is an ideal, we have that $(\langle d \rangle_+, \cdot)$ is a normal subgroup of $(A,\cdot)$ and $\langle d \rangle_+$ is $\lambda$-invariant. Therefore, for every $x\in A$, either $x^{-1}dx = d$ or $x^{-1}dx = -d$, and either $\lambda_x(d) = d$ or $\lambda_x(d) = -d$.

Let $x \in A$ and assume $xd = dx$. It follows $d \ast x = dx -d - x = xd -x  -d = x \ast d$. If $\lambda_x(d) = d$, then $d \ast x = x \ast d = \lambda_x(d) - d = 0$. If $\lambda_x(d) = - d$, then $d \ast x = x \ast d = -2d \in \Soc(A)$. Thus, $2d = d^2 \in \Soc(A)$. Therefore, it follows that $d^2x = xd^2$ and $x\ast d^2 = d^2\ast x = 0$, i.e. $\lambda_x(d^2) = d^2$. But then, we see that
\begin{align*}
2d = d^2 = \lambda_x(d^2) = \lambda_x(2d) = 2\lambda_x(d) = -2d,
\end{align*}
which is a contradiction.

Assume now $dx = x(-d)$. Then, $dx = x(0-d) = 2x - xd$. If $\lambda_x(d)  = d$, then $dx = 2x - (x + \lambda_x(d)) = x - d$. Thus, $d \ast x = dx -d -x = -2d \in \Soc(A)$. Then, $2d = d^2 \in \Soc(A)$. It follows that
\[ 0 = d^2 \ast x =  d \ast (d \ast x) + 2(d\ast x) \]
Since $d \ast x \in \langle d \rangle = \langle d \rangle_+$ and $\langle d \rangle_+$ is an abelian subbrace, it follows that $d \ast (d \ast x) = 0$. Hence, $2(d\ast x) = 0$, which is a contradiction as $\langle d \rangle_+$  is isomorphic to an infinite cyclic group.

It remains the case, $dx = x(-d)$ and $\lambda_x(d) = -d$. Then, $dx = 2x - (x-d) = x+d$, and therefore, $d\ast x = dx -d -x = 0$. Hence, we can conclude $d\ast x = 0$ for every $x\in A$, i.e. $d \in \Soc(A)$.
\end{proof}

\begin{lema}
\label{lema:2.3nou}
Let $A$ be a Dedekind left brace and let $d\in \Soc_2(A)$ with infinite additive order. Then, $d \ast d$ has finite additive order.
\end{lema}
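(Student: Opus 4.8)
The plan is to argue by contradiction: assume $u:=d\ast d$ has infinite additive order and produce a subbrace of $\langle d\rangle$ that fails to be an ideal, contradicting the Dedekind hypothesis. Write $S=\Soc(A)$. Since $d\in\Soc_2(A)$, every value $d\ast a$ lies in $S$; in particular $u\in S$, so $u\ast x=0$ for all $x\in A$, and $\langle u\rangle_+\cong\mathbb{Z}$ is an ideal of $A$, being a subgroup of the abelian ideal $S$. Ideals are $\lambda$-invariant, so $\lambda_d$ restricts to an automorphism of $\langle u\rangle_+\cong\mathbb{Z}$, forcing $\lambda_d(u)=\pm u$, that is, $d\ast u=\lambda_d(u)-u\in\{0,-2u\}$. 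The proof splits along this alternative.

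If $d\ast u=0$, I would invoke Lemma~\ref{lema:infiniteorder->subbridanoideal} directly with $a=d$ and $c=u$: here $c=d\ast d\neq 0$; both $c\ast c=u\ast u=0$ and $c\ast a=u\ast d=0$ hold because $u\in S$; the remaining hypothesis $a\ast c=d\ast u=0$ is the present case; and $c=u$ has infinite additive order. That lemma then exhibits a subbrace contained in $\langle d\rangle$ which is not an ideal of $A$, contradicting that $A$ is Dedekind.

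The case $d\ast u=-2u$ (equivalently $\lambda_d(u)=-u$) is the crux, and I expect the bookkeeping here to be the \emph{main obstacle}. First, by \eqref{ast_prod} one gets $d^2\ast d=d\ast u+2u=-2u+2u=0$. Next, since $\lambda_{d^2}=\lambda_d^{2}$ fixes $u^{-1}=-u$ (as $\lambda_d(u)=-u$) and $d^2=2d+u$, we obtain $d^2u^{-1}=d^2+\lambda_{d^2}(u^{-1})=d^2-u=2d$. Hence, by \eqref{ast_prod}, $(2d)\ast d=(d^2u^{-1})\ast d=d^2\ast(u^{-1}\ast d)+u^{-1}\ast d+d^2\ast d=0$, using $u^{-1}=-u\in S$, and therefore $(2d)\ast(2d)=2\bigl((2d)\ast d\bigr)=0$ by \eqref{ast_distesq}. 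Now $2d=d+d$ lies in $\Soc_2(A)$ and has infinite additive order, so Lemma~\ref{lema:dinSoc2+order-inf+dastd=0->dinS} yields $2d\in S$, whence $d^2=2d+u\in S$; in particular $\lambda_{d^2}=\id$ and $\lambda_d^{2}=\id$.

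With $d^2\in S$ available I would set $H:=\langle 3d\rangle_++\langle d^2\rangle_+\leq\langle d\rangle$ and claim that $H$ is a subbrace of $A$ that is not an ideal. That $H$ is multiplicatively closed is exactly where $d^2\in S$ enters: writing a general element of $H$ as $(3pd)(qd^2)$, one has $\lambda_{qd^2}=\id$ and $\lambda_{3pd}\in\{\id,\lambda_d\}$ (since $3pd$ and $pd$ differ by $2pd\in S$), so with $\lambda_d(d)=d+u$ and $u=d^2-2d$ a routine computation gives $x\cdot y\in H$ for all $x,y\in H$. On the other hand $3d=d\cdot(2d+2u)$ with $2d+2u\in S$, so \eqref{ast_prod} gives $(3d)\ast d=d\ast d=u$, whereas $u\notin H$: reducing modulo the ideal $\langle d^2\rangle_+$ one has $\overline u=-2\overline d$ and $\overline H=3\langle\overline d\rangle_+$, while $\overline d$ has infinite additive order in $\langle d\rangle/\langle d^2\rangle_+$, so $-2\overline d\notin 3\langle\overline d\rangle_+$. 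Thus $3d\in H$, $d\in A$, but $(3d)\ast d\notin H$, so $H$ is not an ideal, contradicting that $A$ is Dedekind. Since both cases are impossible, $d\ast d$ has finite additive order. The two points that genuinely require care are the identity $d^2u^{-1}=2d$ (which is what makes Lemma~\ref{lema:dinSoc2+order-inf+dastd=0->dinS} applicable) and the verification that $H$ is closed under multiplication.
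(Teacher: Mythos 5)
Your argument tracks the paper's own proof closely through its first half and then diverges in the endgame. Like the paper, you split according to the action of $d$ on the infinite cyclic ideal $\langle u \rangle_+$ (the paper uses normality of $(\langle u\rangle_+,\cdot)$ and conjugation $dud^{-1}=\pm u$, you use $\lambda$-invariance and $\lambda_d(u)=\pm u$; both exploit that the only automorphisms of $\mathbb{Z}$ are $\pm\id$ and lead to the same two cases $d\ast u=0$ and $d\ast u=-2u$). You dispose of the case $d\ast u=0$ by exactly the paper's appeal to Lemma~\ref{lema:infiniteorder->subbridanoideal}, and in the case $\lambda_d(u)=-u$ you reach $(2d)\ast(2d)=0$ and hence $2d\in\Soc(A)$ via Lemma~\ref{lema:dinSoc2+order-inf+dastd=0->dinS} just as the paper does (the paper writes $2d=d^2s$ with an unspecified $s\in\Soc(A)$; your explicit $s=u^{-1}=-u$ is the same computation). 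Where you genuinely differ is in extracting the final contradiction: the paper works with $\langle d\rangle=\langle d\rangle_+ + \langle 2d+u\rangle_+$ and splits into the cases where this sum is or is not direct, using $\langle 4d+u\rangle_+$ as the offending ideal in the first case and a $0$-rank argument in the second, whereas you build a single subbrace $H=\langle 3d\rangle_+ + \langle d^2\rangle_+$ and show $(3d)\ast d=u\notin H$ by passing to $\langle d\rangle/\langle d^2\rangle_+$. Your route is shorter and avoids the two-case analysis; the verifications that $H$ is multiplicatively closed and that $(3d)\ast d=u$ go through as you set them up (closure under inversion should be mentioned too, but $\lambda_d^2=\id$ gives $x^{-1}=-\lambda_x^{-1}(x)\in\{-x,-\lambda_d(x)\}\subseteq H$, so it is harmless).

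The one assertion you must not leave unproved is that $\overline d$ has infinite additive order in $\langle d\rangle/\langle d^2\rangle_+$, i.e.\ that $\langle d\rangle_+\cap\langle d^2\rangle_+=\{0\}$. This is precisely the difficulty the paper's proof spends its second half on: a priori $\langle d\rangle_+$ and $\langle u\rangle_+$ (hence $\langle d^2\rangle_+=\langle 2d+u\rangle_+$) may intersect nontrivially, and if $\overline d$ had finite order then $-2\overline d\in 3\langle\overline d\rangle_+$ could well hold and your contradiction would evaporate. Fortunately the claim is true and quick in your setting: by~\eqref{ast_distesq} one has $d\ast d^2=d\ast(2d)+d\ast u=2u-2u=0$, so if $nd=qd^2$ then $nu=d\ast(nd)=q(d\ast d^2)=0$, forcing $n=0$ because $u$ has infinite additive order. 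With that line inserted, your proof is complete and constitutes a valid, somewhat more economical alternative to the paper's argument.
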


\begin{proof}
Since $d \in \Soc_2(A)$, it follows that $c:= d \ast d \in \Soc(A)$. Suppose that $c$ has infinite additive order. 
Since $\Soc(A)$ is abelian, it follows that $\langle c \rangle = \langle c \rangle_+$ is a subbrace of $A$, and therefore, an ideal. Thus, $(\langle c \rangle_+,\cdot)$ is a normal subgroup of $(A, \cdot)$, so that either $dcd^{-1} = c$ or $dcd^{-1} = -c = c^{-1}$.

Assume $dcd^{-1} = c$. Since $c\in \Soc(A)$, $c \ast c = c \ast d = 0$. Moreover, $d \ast c = dc -d -c = cd -c - d = c\ast d = 0$. Applying, Lemma~\ref{lema:infiniteorder->subbridanoideal}, there exists a subbrace which is not an ideal; a contradiction.

Assume $dcd^{-1} = -c$. Then, it holds that 
\begin{align*}
d \ast c & = dc -d - c = -c + (-c)d -d  = -2c + (-c)d + c - d =\\
& = -2c + (-c)\ast d = -2c.
\end{align*}
On the other hand, since $\Soc_2(A)/\Soc(A)$ is abelian, $2d = d^2s$ for some $s \in \Soc(A)$. Thus, it also holds
\begin{align*}
(2d) \ast (2d) & = (d^2s) \ast (2d) = d^2 \ast \big(s \ast (2d)\big) + s \ast (2d) + d^2 \ast (2d) = \\
& =  d^2 \ast (2d) = 2(d^2 \ast d) = 2\big(d \ast (d \ast d) + 2d \ast d\big) = \\
& = 2(d \ast c + 2c) = 2(-2c + 2c) = 0
\end{align*}
Therefore, by Lemma~\ref{lema:dinSoc2+order-inf+dastd=0->dinS}, $2d\in \Soc(A)$. Hence, $2d + c \in \Soc(A)$ and 
\[ d \ast (2d + c) = 2d \ast d + d \ast c = 0.\]

By Lemma~\ref{lema:dastd-nozero+Dededkind-><d>=<d>+<dastd>}, we can write $\langle d \rangle = \langle d \rangle_+ + \langle c \rangle_+ = \langle d \rangle_+ + \langle 2d + c \rangle_+$. Assume that $\langle d \rangle = \langle d \rangle_+ \oplus \langle 2d + c\rangle_+$. Since $4d+c\in \Soc(A)$, then $\langle 4d +c \rangle = \langle 4d+c \rangle_+$ is an ideal. However, we see that
\[ d \ast (4d + c) = 4(d \ast d) + d \ast c = 4c - 2c = 2c \notin \langle 4d + c \rangle_+ = \langle 2d + 2d+c\rangle_+\]
Hence, we arrive to a contradiction.

Assume that $\langle d \rangle_+ \cap \langle 2d+c\rangle_+ \neq \{0\}$. If $\langle 2d + c \rangle_+ \leq \langle d \rangle_+$, then $\langle d \rangle = \langle d \rangle_+$ is a Dedekind left brace whose additive group is isomorphic to $\mathbb{Z}$. Thus, Proposition~\ref{prop:dedekindbraces-Z} yields that $\langle d \rangle$ is abelian, i.e. $d \ast d = 0$, which is a contradiction. If $\langle d \rangle_+ \leq \langle 2d +c \rangle_+$, then $d \in \Soc(A)$, a contradiction. Therefore, 
\[ \langle d \rangle_+ \neq \langle d \rangle_+ + \langle 2d+c \rangle_+ \neq \langle 2d+c \rangle_+.\] Since $\{0\} \neq \langle d \rangle_+ \cap \langle 2d+c \rangle_+$, the $0$-rank of $\langle d \rangle$ is $1$. Thus, $ \langle d \rangle = \langle v \rangle_+ \oplus \langle u \rangle_+$ where $v$ has infinite additive order and $u$ has finite additive order $n$. Let $l,m \in \mathbb{Z}$ such that  $u = ld + m(2d+c)$. It follows that
\[ d \ast u = d \ast \big(ld+ m(2d+c)\big) = ld \ast d = lc.\]
On the other hand,  $0 = d \ast 0 = d \ast(nu) = n(d\ast u) = nlc$, i.e. $c$ has finite additive order. We arrive to a final contradiction. 
\end{proof}

\begin{proposicio}
\label{prop:2.4nou}
Let $A$ be a Dedekind left brace. The additive group of the quotient $\Soc_2(A)/\Soc(A)$ is periodic.
\end{proposicio}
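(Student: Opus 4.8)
\emph{The plan is to argue by contradiction.} Suppose that the additive group of $C/S:=\Soc_2(A)/\Soc(A)$ is not periodic, and pick $d\in C$ whose image $d+S$ has infinite additive order in $C/S$. Then $d$ itself has infinite additive order and, more importantly, $kd\notin S$ for every nonzero integer $k$. Since $d\in\Soc_2(A)$, Lemma~\ref{lema:2.3nou} shows that $c:=d\ast d$ has finite additive order, say $n$; and if $c=0$ then Lemma~\ref{lema:dinSoc2+order-inf+dastd=0->dinS} would already give $d\in\Soc(A)=S$, a contradiction. So we may assume $c\neq 0$, hence $n\geq 2$ and $nc=0$. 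From here the whole argument reduces to proving that $e:=nd$ satisfies $e\ast e=0$: indeed $e=nd$ lies in the ideal $C=\Soc_2(A)$ and has infinite additive order, so Lemma~\ref{lema:dinSoc2+order-inf+dastd=0->dinS} would then force $nd=e\in\Soc(A)=S$, contradicting $kd\notin S$ for $k\neq 0$.

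To control $e\ast e$ I would first record two auxiliary observations. \textbf{(i)} A \emph{socle-translation} identity: for $s\in\Soc(A)$ and all $a,b\in A$ one has $(a+s)\ast b=a\ast b$. This holds because $a+s=a\,\lambda_{a^{-1}}(s)$, the element $\lambda_{a^{-1}}(s)$ again lies in the $\lambda$-invariant ideal $\Soc(A)$ and therefore acts trivially through $\lambda$, so $\lambda_{a+s}=\lambda_a\circ\lambda_{\lambda_{a^{-1}}(s)}=\lambda_a$, whence $(a+s)\ast b=\lambda_{a+s}(b)-b=\lambda_a(b)-b=a\ast b$. \textbf{(ii)} Since $C/S=\Soc(A/S)$ is an abelian left brace, in it the $n$-th multiplicative power of $d+S$ equals its $n$-fold additive multiple; applying the quotient homomorphism this says $d^n-nd\in\Soc(A)$.

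\emph{The computation then runs as follows.} By Equation~\eqref{ast_distesq}, $d\ast(nd)=n(d\ast d)=nc=0$. Feeding $d^m=d^{m-1}\cdot d$ into Equation~\eqref{ast_prod} gives, for every $m\geq 2$,
\[
d^m\ast(nd)=d^{m-1}\ast\bigl(d\ast(nd)\bigr)+d\ast(nd)+d^{m-1}\ast(nd)=d^{m-1}\ast(nd),
\]
so, starting from $d^1\ast(nd)=d\ast(nd)=0$, induction yields $d^m\ast(nd)=0$ for all $m\geq 1$; in particular $d^n\ast(nd)=0$. Writing $nd=d^n+s$ with $s:=nd-d^n\in\Soc(A)$ and invoking observation \textbf{(i)}, we conclude $e\ast e=(nd)\ast(nd)=(d^n+s)\ast(nd)=d^n\ast(nd)=0$, as required, and the contradiction announced above is reached. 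Hence $\Soc_2(A)/\Soc(A)$ is periodic.

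\emph{The delicate point} is the bookkeeping between the two operations: $\ast$ is additive in its second argument (Equation~\eqref{ast_distesq}) but not in its first, so the passage from the easy identity $d\ast(nd)=0$ to the desired $(nd)\ast(nd)=0$ cannot be done ``additively'' and must instead be routed through the multiplicative power $d^n$ by means of Equation~\eqref{ast_prod}, correcting by a socle element which is harmless only thanks to the socle-translation identity. One must also keep track of the fact that $nd$ genuinely lies outside $S$---this is exactly the meaning of $d+S$ having infinite additive order in $C/S$---and remember that $c=d\ast d$ is permitted to be nonzero only after Lemma~\ref{lema:dinSoc2+order-inf+dastd=0->dinS} has disposed of the case $d\ast d=0$.
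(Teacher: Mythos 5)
Your proof is correct, but it takes a noticeably more streamlined route than the paper's. Both arguments share the same skeleton: pick $d\in\Soc_2(A)$ with $d+\Soc(A)$ of infinite additive order, note via Lemmas~\ref{lema:dinSoc2+order-inf+dastd=0->dinS} and~\ref{lema:2.3nou} that $c=d\ast d$ is nonzero of finite additive order, and then manufacture a positive multiple $e$ of $d$ with $e\ast e=0$ so that Lemma~\ref{lema:dinSoc2+order-inf+dastd=0->dinS} forces $e\in\Soc(A)$, a contradiction. The difference is in how $e$ is found. The paper first uses that $\langle c\rangle_+$ is a finite normal subgroup of $(A,\cdot)$ to pick $n$ with $d^{-n}cd^n=c$, sets $v=nd$, shows $v\ast v=n(d^n\ast d)$ lies in $\langle c\rangle_+$ (hence has finite order $k$), proves $(tv)\ast v=t(v\ast v)$ by induction, and only then gets $(kv)\ast(kv)=0$ --- a two-stage scaling. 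You instead take $n$ to be the additive order of $c$, so that $d\ast(nd)=nc=0$ immediately, propagate this to $d^n\ast(nd)=0$ via Equation~\eqref{ast_prod}, and convert $d^n$ into $nd$ using $nd-d^n\in\Soc(A)$ together with your socle-translation identity $(a+s)\ast b=a\ast b$ for $s\in\Soc(A)$ (which is correct: $\lambda_{a+s}=\lambda_a\lambda_{\lambda_{a^{-1}}(s)}=\lambda_a$ since ideals are $\lambda$-invariant and socle elements have trivial $\lambda$). This isolates as a clean lemma a manipulation the paper re-derives in multiplicative form (e.g.\ $(d^nx)\ast c=d^n\ast c$ for $x\in\Soc(A)$), avoids the normality/conjugation argument for $\langle c\rangle_+$ and the second scaling by $k$ entirely, and yields a shorter proof; the paper's longer computation does not appear to produce anything reused elsewhere. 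All lemmas you invoke are established before Proposition~\ref{prop:2.4nou} and do not depend on it, so there is no circularity.
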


\begin{proof}
Set $S:= \Soc(A)$ and $C:= \Soc_2(A)$. Assume that the additive group of $C$ contains an element $d$ such that $zd \notin S$ for all integers $z$, i.e. $d$ has infinite additive order. In particular, Lemma~\ref{lema:dinSoc2+order-inf+dastd=0->dinS} implies that $c = d\ast d \neq 0$. Moreover, by Lemma~\ref{lema:2.3nou}, $c$ has a finite additive order in $A$. Thus, according to Lemma~\ref{lema:dastd-nozero+Dededkind-><d>=<d>+<dastd>}, we can write $\langle d \rangle = \langle d \rangle_+ \oplus \langle c \rangle_+$, where $\langle c \rangle_+$ is a finite subbrace, and therefore, a finite ideal. Hence, $\langle c \rangle_+$ is a finite normal subgroup of $(A, \cdot)$. Thus, there is a positive integer $n$ such that $d^{-n}cd^n = c$. 

Now, it follows that $nd = d^nx$ for some  $x\in S$ as $C/S$ is abelian. Then, we get that
\[ (nd) \ast c = (d^nx)\ast c = d^n \ast (x \ast c) + x \ast c + d^n \ast c = d^n \ast c.\]
Moreover, since $cd^n = d^nc$, we have that $(nd) \ast c = d^n \ast c = c\ast d^n = 0$. 

Set $v := nd$ with infinite additive order. Then, Lemma~\ref{lema:dinSoc2+order-inf+dastd=0->dinS} implies that $u:= v \ast v \neq 0$ and so, $\langle v \rangle = \langle v \rangle_+ + \langle u \rangle_+$ by Lemma~\ref{lema:dastd-nozero+Dededkind-><d>=<d>+<dastd>}. It follows that
\begin{align*}
u & = v \ast v  = (nd) \ast (nd) = n\big((nd) \ast d\big) = n\big((d^nx)\ast d\big) = \\
& = n\big(d^n \ast (x \ast d) +  x \ast d + d^n \ast d\big)  = n(d^n \ast d).
\end{align*}
We claim that $d^m \ast d \in \langle c \rangle_+$ for every positive integer $m$. Indeed, if $d^m \ast d \in \langle c \rangle_+$ for a certain $m \geq 1$, it holds
\begin{align*}
d^{m+1}\ast d & = (dd^m)\ast d = d \ast (d^m\ast d) + d^m\ast d + d \ast d = \\
& = d \ast (d^m\ast d) + d^m\ast d + c \in \langle c \rangle_+,
\end{align*}
as $\langle c \rangle_+$ is an ideal. In particular, it follows that $u\in \langle c \rangle_+$, so that $u = jc$ for some integer $j$. Thus, $u$ has finite  additive order $k$.

Now, $v \ast u = j(v\ast c) = 0 = u \ast v$, as $u \in S$. We claim $(tv) \ast v = t(v \ast v) = tu$, for every positive integer $t$. Indeed, suppose that $(tv) \ast v = tu$. Since $(t+1)v + S = v(tv)+ S = v(tv)S$, then $(t+1)v = v(tv)y$, for some $y\in S$. Therefore, it holds
\begin{align*}
 \big((t+1)v\big) \ast v & = \big(v(tv)y\big) \ast v = \big(v(tv)\big) \ast (y \ast v) + y\ast v + \big(v(tv)\big) \ast v  = \\
& = \big(v(tv)\big) \ast v = v \ast \big((tv)\ast v\big) + (tv) \ast v + v \ast v = \\
& =  v \ast (tu) + tu + u = t(v \ast u) + (t+1)u = (t+1)u.
\end{align*}

In particular, it follows that $(kv) \ast (kv) = k\big((kv) \ast v\big) = k^2u = 0$.  Moreover, $(kv) = knd \in \langle d \rangle_+$ has infinite additive order. Hence,  Lemma~\ref{lema:dinSoc2+order-inf+dastd=0->dinS} yields $knd \in S$, which is a contradiction. 
\end{proof}

The second key step is to show that for a Dedekind left brace $A$, if $A = \Soc_2(A)$ has non-periodic additive group then the socle coincides with the centre of $A$. To this aim, we prove that if $\Soc_2(A)$ has non-periodic additive group then $\Soc_2(A)$ is contained in $\Ann_A(\Soc(A))$ (see Proposition~\ref{prop:2.14nou}).

\emph{From now on, $A$ denotes a Dedekind left brace.}

\begin{lema}
\label{lema:2.5nou}
Let $d \in \Soc_2(A)$ with infinite additive order. Then $d\ast x, x \ast d \in \langle d \ast d \rangle_+$, for every $x \in \Ann_A(\Soc(A))$.
\end{lema}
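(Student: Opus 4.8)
We have a Dedekind left brace $A$, an element $d \in \Soc_2(A)$ of infinite additive order, and $x \in \Ann_A(\Soc(A))$. Set $c := d \ast d$; by Lemma~\ref{lema:dinSoc2+order-inf+dastd=0->dinS} we know $c \neq 0$, and by Lemma~\ref{lema:2.3nou} the element $c$ has finite additive order. Moreover, since $d \in \Soc_2(A)$, both $d \ast x$ and $x \ast d$ already lie in $S := \Soc(A)$. So the content of the lemma is to pin down \emph{where} in $S$ these elements live, namely inside the finite cyclic subgroup $\langle c \rangle_+$. The natural tool is Lemma~\ref{lema:dastd-nozero+Dededkind-><d>=<d>+<dastd>}, which gives $\langle d \rangle = \langle d \rangle_+ + \langle c \rangle_+$; combined with Lemma~\ref{lema:2.3nou} (so that $\langle c\rangle_+$ is finite and $\langle d\rangle_+$ is infinite cyclic), this should actually be a direct sum $\langle d \rangle = \langle d \rangle_+ \oplus \langle c \rangle_+$.

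**The core argument.** The plan is to exploit that $D := \langle d \rangle$ is an ideal, so $D \ast A \subseteq D$ and in particular $d \ast x \in D \cap S$ and $x \ast d \in D \cap S$. Writing an element of $D = \langle d\rangle_+ \oplus \langle c \rangle_+$ as $md + nc$, the condition of lying in $S$ forces the $\langle d \rangle_+$-component to vanish: indeed if $md + nc \in S = \Soc(A)$ with $m \neq 0$, then since $md$ has infinite additive order and $md + nc$ differs from $md$ by the torsion element $nc$, a suitable multiple of $md+nc$ would be a nonzero element of $\langle d\rangle_+ \cap S$; but $\langle d\rangle_+ \cong \mathbb{Z}$ is torsion-free, and one checks $\langle d\rangle_+$ cannot meet $S$ in a nonzero element — otherwise $kd \in S$ for some $k \geq 1$, and then $d \ast d$ would be killed by $k$ via $k(d\ast d) = d\ast(kd) \in d \ast S$; since $kd \in S \subseteq \Soc(A)$ actually $d \ast (kd)$ need not be zero directly, so the cleaner route is: $kd \in S$ gives $\langle d \rangle_+ \cap S \neq 0$, hence by the direct sum decomposition and Lemma~\ref{lema:dastd-nozero+Dededkind-><d>=<d>+<dastd>} applied inside $\langle d\rangle$ we could run the contradiction of Lemma~\ref{lema:dinSoc2+order-inf+dastd=0->dinS}-type. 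In any case, the upshot I want is that $D \cap S = \langle c \rangle_+$ exactly (or at least $D \cap S \subseteq \langle c\rangle_+ \oplus (\text{torsion complement})$, which by the direct sum structure is just $\langle c\rangle_+$). Granting $D \cap S = \langle c \rangle_+$, we immediately get $d \ast x, x \ast d \in D \cap S = \langle c \rangle_+ = \langle d \ast d\rangle_+$, which is the claim.

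**Where $x \in \Ann_A(\Soc(A))$ enters, and the main obstacle.** I expect the hypothesis $x \in \Ann_A(\Soc(A))$ to be needed precisely to control the interaction: it guarantees $x \ast c = 0 = c \ast x$ and $xc = cx$, so that $c$ and $x$ commute and $x$ centralizes the torsion piece; this is what lets us conclude $D \cap S$ is genuinely $\langle c\rangle_+$ rather than something larger after multiplying $d$ by $x$, and it also feeds future lemmas. More concretely, to show $D \cap S \subseteq \langle c\rangle_+$ I would argue: any element of $D$ is $md + nc$; if it lies in $S$, then applying $\lambda$-action or the star operation against a well-chosen element (using that $\langle c\rangle_+$ is a finite ideal, hence $\langle c \rangle_+$ is normal in $(A,\cdot)$ and $\lambda$-invariant) forces $m = 0$. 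The main obstacle will be handling the possibility $m \neq 0$ cleanly — i.e. ruling out that $\langle d\rangle_+$ itself (or a coset-representative line in $D$) meets $S$ nontrivially. The safe way is to invoke Lemma~\ref{lema:dinSoc2+order-inf+dastd=0->dinS}: if $md \in S$ for some $m \neq 0$ then $md \in \Soc_2(A)$ has infinite additive order and, were $(md)\ast(md) = 0$, it would force $md \in \Soc(A)$ consistently, but we need $(md) \ast (md)$; computing $(md)\ast(md) = m((md)\ast d) = m^2(d\ast d) + (\text{lower terms}) $ via Equations~\eqref{ast_distesq} and~\eqref{ast_prod} and the fact that $d\ast d = c$ has finite order — so for $m$ a multiple of $\operatorname{ord}(c)$ this vanishes, contradicting $md \notin \Soc(A)$... but $md$ might actually be in $\Soc(A)$, which is the case $m \neq 0$ we are trying to exclude in the first place. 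The resolution, which I would carry out carefully, is that $md \in S$ together with $d \notin S$ (true since $d$ has infinite additive order but, by Proposition~\ref{prop:2.4nou}, $\Soc_2(A)/\Soc(A)$ is periodic, so some multiple of $d$ does land in $S$ — meaning $m \neq 0$ is in fact \emph{possible}!). This means the decomposition is subtler: $D \cap S$ contains $\langle c \rangle_+$ and possibly $\langle md \rangle_+ $ for the minimal such $m$; so what one really proves is $d \ast x, x \ast d \in \langle c\rangle_+$ by a separate, direct computation. I would therefore instead compute $d \ast x$ and $x \ast d$ directly: since $x \in \Ann_A(S)$ and $d \ast d = c \in S$, expand $d^2 \ast x = d \ast(d\ast x) + (d \ast x) + (d\ast x)$ and, using $d \ast x \in S$ so $d \ast (d \ast x) = (d\ast d)\ast(\cdots)$-type relations together with $c \ast x = 0$, extract that $d \ast x$ is constrained to a multiple of $c$; symmetrically for $x \ast d$ using Equation~\eqref{ast_prod}. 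This direct route, rather than the decomposition route, is what I expect to actually work, and isolating the right identity among~\eqref{ast_distesq}--\eqref{ast_prod} that forces membership in $\langle c\rangle_+$ is the crux of the proof.
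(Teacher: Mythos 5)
There is a genuine gap: neither of the two routes you sketch yields a proof, and the key idea of the paper's argument is missing. Your first route --- showing $\langle d\rangle \cap \Soc(A) = \langle d\ast d\rangle_+$ and then using that $\langle d\rangle$ is an ideal --- cannot work, as you yourself eventually notice: by Proposition~\ref{prop:2.4nou} some nonzero multiple $nd$ lies in $\Soc(A)$, so $\langle d\rangle\cap\Soc(A)$ properly contains $\langle d\ast d\rangle_+$, and the ideal property of $\langle d\rangle$ only gives $d\ast x\in\langle d\rangle\cap\Soc(A)$, which is too weak. Your fallback, a ``direct computation'' expanding $d^{2}\ast x = d\ast(d\ast x)+2(d\ast x)$, is not carried out and does not close: $d^{2}\ast x$ is itself unknown, $d\ast(d\ast x)$ need not vanish (at this stage of the paper $d$ is not known to annihilate $\Soc(A)$; that is the content of the later Proposition~\ref{prop:2.14nou}), and from $(nd)\ast x=0$ one cannot deduce $n(d\ast x)=0$ because $\ast$ is not additive in its first argument. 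The phrase ``extract that $d\ast x$ is constrained to a multiple of $c$'' is exactly the step that is never justified.

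The idea you are missing is to pass to the quotient by $U:=\langle d\ast d\rangle_+$. By Lemma~\ref{lema:2.3nou} and the fact that subbraces of the socle are ideals, $U$ is a finite ideal, so $A/U$ is again Dedekind, $d+U$ still has infinite additive order, and $(d+U)\ast(d+U)=U$; Lemma~\ref{lema:dinSoc2+order-inf+dastd=0->dinS} then places $d+U$ in $\Soc(A/U)$, which gives $d\ast x\in U$ at one stroke (in fact for every $x\in A$, so the hypothesis on $x$ is not needed for this half, contrary to your guess about where it enters). The claim for $x\ast d$ is genuinely asymmetric and needs a separate argument: $\lambda_{x+U}$ preserves the ideal $\langle d+U\rangle_+\cong\mathbb{Z}$ of $A/U$, hence sends $d+U$ to $\pm d+U$; the case $-d+U$ is excluded because $\lambda_x$ fixes $nd\in\Soc(A)$ (this is precisely where $x\in\Ann_A(\Soc(A))$ is used), since otherwise $2nd\in U$ would force $d$ to have finite additive order. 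Your ``symmetrically for $x\ast d$'' glosses over all of this.
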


\begin{proof} 
Let $x \in \Ann_A(\Soc(A))$. Proposition~\ref{prop:2.4nou} implies that there is a positive integer $n$ such that $nd \in \Soc(A)$. Then, $(nd)\ast x = x \ast (nd) = 0$. In particular, it follows that $\lambda_x(nd) = nd$. By Lemma~\ref{lema:dastd-nozero+Dededkind-><d>=<d>+<dastd>}, $\langle d \rangle = \langle d \rangle_+ + \langle d \ast d \rangle_+$.  Moreover $U:= \langle d \ast d \rangle_+$ is a subbrace as $d\ast d \in \Soc(A)$. Thus, $U$ is an ideal of $A$ and Lemma~\ref{lema:2.3nou} implies that $U$ is finite. Therefore, $\langle d \rangle = \langle d\rangle_+ \oplus U$.

The quotient $A/U$ is also a Dedekind left brace. Then, $(d+U) \ast (d+U) = (d\ast d) + U = U$. Thus, $ \langle d+ U \rangle = \langle d + U \rangle_+ = \langle d\rangle_+ + U$ is a subbrace which is an ideal in $A/U$.  Moreover, $d+U$ has additive infinite order, and therefore,  Lemma~\ref{lema:dinSoc2+order-inf+dastd=0->dinS} implies $d+U \in \Soc(A/U)$.  In particular, it follows that 
\[ U = (d+U)\ast (x+U) = (d\ast x) + U,\]
so that $(d\ast x ) \in U$. 

On the other hand, since $\langle d +U\rangle_+$ is an ideal of $A/U$, we have that  $\lambda_{x+U}(\langle d + U \rangle_+) = \langle d + U \rangle_+$. Thus, either $\lambda_{x + U}(d+U) = d+U$ or $\lambda_{x+U}(d+U) =  -d+U$. In the former case, we obtain
\[ d+U = (x+U)(d+U) - (x+U) = xd - x + U.\]
Therefore, $x\ast d = xd - x -d \in  U$.

Suppose now that $\lambda_{x+U}(d+U) = -d + U$. Then, $\lambda_{x+U}(nd+U) = -nd+U$. On the other hand, since $\lambda_x(nd) = nd$, we obtain that $\lambda_{x+U}(nd+U) = nd+U$ which yields that $2nd + U = U$. Since $U$ is finite, this implies that $d$ has finite additive order, which is a contradiction.
\end{proof}

\begin{lema}
\label{lema:2.6nou}
Let $d \in \Soc_2(A)$ with finite additive order. If the additive group of $\Soc_2(A)$ is not periodic, then $d \ast x, x \ast d \in \langle d \ast d \rangle_+$ for every  $x\in \Ann_A(\Soc(A))$.
\end{lema}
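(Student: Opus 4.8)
The plan is to handle $d \in \Soc_2(A)$ of finite additive order by exploiting the non-periodic elements that are available by hypothesis. The key idea is to transfer the problem to an element of infinite additive order, where Lemma~\ref{lema:2.5nou} already applies, and then pull the conclusion back to $d$. Concretely, since the additive group of $\Soc_2(A)$ is not periodic, we may pick $e \in \Soc_2(A)$ of infinite additive order; then $d + e$ also has infinite additive order and lies in $\Soc_2(A)$. Fix $x \in \Ann_A(\Soc(A))$.

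First I would compute $d \ast x$ in terms of data about $d+e$ and $e$. Using additivity \eqref{ast_distesq} on the first variable is not directly available, but on the second variable it is; instead the natural move is to write $d \ast x = (d+e) \ast x - e \ast x$, which is valid because $\ast$ is additive in the first argument only up to the correction coming from the multiplicative structure — more precisely one uses $a \ast c + b \ast c$ versus $(a+b)\ast c$. So the careful first step is to establish the identity relating $(d+e)\ast x$, $d\ast x$ and $e\ast x$: since $a + b = a\lambda_{a^{-1}}(b)$, we get $(d+e)\ast x = (d \lambda_{d^{-1}}(e)) \ast x$, and expanding with \eqref{ast_prod} gives $(d+e)\ast x = d \ast(\lambda_{d^{-1}}(e) \ast x) + \lambda_{d^{-1}}(e)\ast x + d \ast x$. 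Here $\lambda_{d^{-1}}(e)$ still has infinite additive order and lies in $\Soc_2(A)$ (as $\Soc_2(A)$ is an ideal, hence $\lambda$-invariant), so Lemma~\ref{lema:2.5nou} applies to it: $\lambda_{d^{-1}}(e)\ast x \in \langle \lambda_{d^{-1}}(e)\ast\lambda_{d^{-1}}(e)\rangle_+ \subseteq \Soc(A)$, and then $d \ast (\lambda_{d^{-1}}(e)\ast x) = 0$ because $d\in\Soc_2(A)$ acts trivially by $\ast$ on… no: $d\ast s$ for $s\in\Soc(A)$ need not vanish. This is the main obstacle.

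To get past it, I would instead choose $e$ more cleverly — not an arbitrary infinite-order element, but one for which $d+e$ is still governed by $\langle (d+e)\ast(d+e)\rangle_+$ in a usable way, and argue via the subbrace $\langle d, e\rangle$ or via $\langle d+ne\rangle$ for suitable $n$. The cleanest route is: by Lemma~\ref{lema:2.5nou}, $e\ast x$ and $x\ast e$ lie in $\langle e\ast e\rangle_+ \leq \Soc(A)$, and likewise for $d+e$ (once we know $(d+e)\ast(d+e)$), so both $(d+e)\ast x$ and $e\ast x$ lie in $\Soc(A)$; hence $d\ast x \in \Soc(A)$ already. Now $\langle d\ast x\rangle_+ \leq \Soc(A)$ is an ideal (being a subbrace of the socle), so $\langle d\ast d\rangle_+$ and $\langle d\ast x\rangle_+$ are comparable or we can combine them: since $d$ has finite additive order, $\langle d\rangle$ is a finite subbrace (by Lemma~\ref{lema:dastd-nozero+Dededkind-><d>=<d>+<dastd>} together with Lemma~\ref{lema:2.3nou}-type finiteness, or directly because everything in sight is torsion), hence $d\ast x \in \langle d\rangle \cap \Soc(A)$, and a finiteness/ideal argument as in the proof of Theorem~\ref{ateo:Ahypersoc_torsionfreee->Aabelian} forces $d \ast x \in \langle d\ast d\rangle_+$. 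I would make this last step precise by considering, for each prime $p$ dividing the relevant orders, the element $d + p^N e$ for $N$ large enough that the $p$-parts separate, so that $\langle d + p^N e\rangle$ decomposes as in Lemma~\ref{lema:dastd-nozero+Dededkind-><d>=<d>+<dastd>} and the $\ast$-computation pins down the component of $d\ast x$ in $\langle d\ast d\rangle_+$ exactly.

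The symmetric statement $x\ast d \in \langle d\ast d\rangle_+$ follows by the same argument applied to the other slot, using $x\ast(d+e) = x\ast d + x\ast e$ (additivity in the second variable, \eqref{ast_distesq}, holds here without correction), together with Lemma~\ref{lema:2.5nou} for $x\ast e$ and for $x\ast(d+e)$, giving $x\ast d \in \Soc(A)$ directly, and then the same finiteness argument inside $\langle d\rangle$. The one genuinely delicate point is verifying that $(d+e)\ast(d+e)$ generates (additively) a group containing the right multiple of $e\ast e$ so that subtracting leaves us inside $\langle d\ast d\rangle_+$ rather than some larger subgroup of $\Soc(A)$; I expect to resolve this by first reducing, via Proposition~\ref{prop:2.4nou}, to the case where $e$ ranges over a set with $e\ast e$ of controlled order, and then choosing $e$ so that $e\ast e$ has additive order coprime to that of $d\ast d$, which makes $\langle (d+e)\ast(d+e)\rangle_+ = \langle d\ast d\rangle_+ \oplus \langle e\ast e\rangle_+$ and lets the components be read off uniquely.
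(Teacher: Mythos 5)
Your overall strategy -- perturb $d$ by an element of infinite additive order so that Lemma~\ref{lema:2.5nou} applies, then strip the perturbation off -- is exactly the right one, but your execution has a genuine gap that you yourself flag and never close. The obstacle is that $\ast$ is not additive in the first argument, so $(d+e)\ast x$ does not decompose as $d\ast x + e\ast x$ for an arbitrary $e\in\Soc_2(A)$ of infinite order: expanding $(d+e)\ast x=(d\lambda_{d^{-1}}(e))\ast x$ via~\eqref{ast_prod} leaves the term $d\ast(\lambda_{d^{-1}}(e)\ast x)$, which, as you correctly observe, need not vanish. Your fallback arguments do not repair this. The claim that $d\ast x\in\langle d\rangle\cap\Soc(A)$ together with ``a finiteness/ideal argument'' forces $d\ast x\in\langle d\ast d\rangle_+$ is unjustified: $\langle d\rangle\cap\Soc(A)$ can be strictly larger than $\langle d\ast d\rangle_+$ (part of $\langle d\rangle_+$ may lie in the socle). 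Likewise, there is no reason an $e$ with $e\ast e$ of additive order coprime to that of $d\ast d$ should exist, and even granting the direct-sum decomposition of $\langle(d+e)\ast(d+e)\rangle_+$, reading off ``the component of $d\ast x$'' still presupposes the very additivity in the first slot that fails.

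The missing idea is the \emph{choice} of the perturbing element: take it from $\Soc(A)$, not merely from $\Soc_2(A)$. By Proposition~\ref{prop:2.4nou} the quotient $\Soc_2(A)/\Soc(A)$ is additively periodic, so since $\Soc_2(A)$ is not periodic there is $b\in\Soc(A)$ of infinite additive order; moreover $\langle d\rangle$ is finite (from $k(d\ast d)=d\ast(kd)=0$ and Lemma~\ref{lema:dastd-nozero+Dededkind-><d>=<d>+<dastd>}), so $\langle b\rangle_+\cap\langle d\rangle=\{0\}$ and hence $b\ast d=d\ast b=0$, giving $b+d=bd$. A socle element is star-transparent on the left ($b\ast(\hbox{--})=0$) and is annihilated by $x\in\Ann_A(\Soc(A))$ on both sides, so \emph{all} correction terms vanish identically: $(b+d)\ast(b+d)=d\ast d$, $(b+d)\ast x=(bd)\ast x=b\ast(d\ast x)+b\ast x+d\ast x=d\ast x$, and $x\ast(b+d)=x\ast b+x\ast d=x\ast d$. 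Applying Lemma~\ref{lema:2.5nou} to the infinite-order element $b+d$ then gives $d\ast x,\,x\ast d\in\langle(b+d)\ast(b+d)\rangle_+=\langle d\ast d\rangle_+$ with no residue to control. This one refinement of your choice of $e$ turns your outline into the paper's proof.
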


\begin{proof}
Let $k$ be the additive order of $d$. Then, $0 = d \ast (kd) = k(d\ast d)$. Thus, by Lemma~\ref{lema:dastd-nozero+Dededkind-><d>=<d>+<dastd>}, $\langle d \rangle = \langle d \rangle_+ + \langle d\ast d \rangle_+$ is finite. On the other hand, Proposition~\ref{prop:2.4nou} implies that the additive group of $\Soc(A)$ is not periodic. Thus, there exists $b \in \Soc(A)$ with infinite additive order. Therefore, $\langle b \rangle_+ \cap \langle d \rangle = \{0\}$. Since both $\langle b\rangle_+$ and $\langle d \rangle$ are ideals, we have that $d \ast b = 0 = b \ast d$. It follows that
\begin{align*}
(b+d) \ast (b+d) & = (bd) \ast (b+d) = (bd) \ast b + (bd) \ast d = \\
&= b \ast (d \ast b) + b\ast b + d \ast b + b \ast (d\ast d) + b \ast d + d\ast d = \\
& = d\ast d,
\end{align*}

Let $x\in \Ann_A(\Soc(A))$. Since $b+d$ has infinite additive order, Lemma~\ref{lema:2.5nou} implies that $(b+d)\ast x$, $x \ast (b+d) \in \langle d \ast d\rangle_+$. But we see that
\begin{align*}
& (b+d) \ast x = (bd) \ast x = b \ast (d \ast x) + b\ast x + d \ast x = d \ast x, \\
& x \ast (b+d) = x \ast b + x \ast d = x \ast d,
\end{align*}
as $b \in\Soc(A)$ and $x\in \Ann_A(\Soc(A))$. Hence, the lemma holds.
\end{proof}

\begin{proposicio}
\label{prop:2.14nou}
Suppose that the additive group of $\Soc_2(A)$ is not periodic. Then $\Soc_2(A) \leq \Ann_A(\Soc(A))$. In particular, if $A = \Soc_2(A)$ then $\Soc(A) = \zeta(A)$.
\end{proposicio}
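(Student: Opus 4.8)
The plan is to prove $\Soc_2(A)\leq\Ann_A(\Soc(A))$ by contradiction, combining the dichotomy for $\Ann_C^l(S)$ recorded in part~(1) of Lemma~\ref{lema:CcapAnn_torsionfree+kxasta+torsionfree} with the membership relations $d\ast s\in\langle d\ast d\rangle_+$ supplied by Lemmas~\ref{lema:2.5nou} and~\ref{lema:2.6nou}. Write $S=\Soc(A)$, $C=\Soc_2(A)$ and $N=\Ann_C^l(S)=C\cap\Ann_A(S)$. I would first record two routine facts: $S\subseteq N$ (because $S=\Ann_A^l(A)$ forces $x\ast s=0$ for all $x\in A$ and $s\in S$, whence $s\ast t=t\ast s=0$ and $st=s+t=ts$ for $s,t\in S$, i.e.\ $S\subseteq\Ann_A(S)$), and $\zeta(A)\subseteq S$ (since $\zeta(A)=\Ann_A(A)\subseteq\Ann_A^l(A)=S$). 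The claim $\Soc_2(A)\leq\Ann_A(S)$ is exactly $N=C$, so assume towards a contradiction that $N\subsetneq C$.

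By part~(1) of Lemma~\ref{lema:CcapAnn_torsionfree+kxasta+torsionfree} we then have $|C/N|=2$ and $\lambda_a(u)=-u$ for every $a\in C\setminus S$ and every $u\in S$. I would pick $a\in C\setminus N$; since $S\subseteq N$ this $a$ lies in $C\setminus S$, so $a\ast s=\lambda_a(s)-s=-2s$ for every $s\in S$. On the other hand $S\subseteq\Ann_A(S)$, so applying Lemma~\ref{lema:2.5nou} when $a$ has infinite additive order and Lemma~\ref{lema:2.6nou} when $a$ has finite additive order (whose hypothesis, the non-periodicity of the additive group of $\Soc_2(A)$, is exactly our standing assumption) yields $a\ast s\in\langle a\ast a\rangle_+$ for every $s\in S$. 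Comparing the two descriptions of $a\ast s$ gives $2S\subseteq\langle a\ast a\rangle_+\subseteq S$, where $2S:=\{2s:s\in S\}$.

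Next I would check that $\langle a\ast a\rangle_+$ is finite: if $a$ has finite additive order $k$ then $k(a\ast a)=a\ast(ka)=0$ by~\eqref{ast_distesq}; if $a$ has infinite additive order this is precisely Lemma~\ref{lema:2.3nou} (note $a\in\Soc_2(A)$). Hence $2S$ is finite. But Proposition~\ref{prop:2.4nou} says the additive group of $C/S$ is periodic, and an abelian periodic-by-periodic group is periodic, so the non-periodicity of $(C,+)$ forces $(S,+)$ to be non-periodic; choosing $b\in S$ of infinite additive order then makes $\langle 2b\rangle_+\cong\mathbb{Z}$ an infinite subgroup of $2S$, a contradiction. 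Therefore $N=C$, that is, $\Soc_2(A)\leq\Ann_A(\Soc(A))$. For the last assertion, when $A=\Soc_2(A)$ this reads $A=\Ann_A(S)$, so $x\ast s=0$ for all $x\in A$ and $s\in S$; together with $s\ast x=0$ (definition of $\Soc(A)$) this places $S$ inside $\Ann_A(A)=\zeta(A)$, and with $\zeta(A)\subseteq S$ we conclude $\Soc(A)=\zeta(A)$.

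The step I expect to carry the weight — the main obstacle — is the clash exploited in the last two paragraphs: once the dichotomy forces $\lambda_a|_S=-\id$, the value $a\ast s$ is simultaneously pinned to the (generically large) set $-2S$ and confined to the cyclic group $\langle a\ast a\rangle_+$, which Lemma~\ref{lema:2.3nou} shows to be finite precisely in the delicate infinite-additive-order case; reconciling this with the non-periodicity transferred from $\Soc_2(A)$ to $\Soc(A)$ through Proposition~\ref{prop:2.4nou} is what breaks the hypothesis $N\subsetneq C$. Everything else is bookkeeping with the definitions of socle, annihilator and centre.
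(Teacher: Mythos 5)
Your proof is correct and follows the same overall strategy as the paper's: assume $N:=\Soc_2(A)\cap\Ann_A(\Soc(A))$ is proper, invoke the index-$2$ dichotomy of Lemma~\ref{lema:CcapAnn_torsionfree+kxasta+torsionfree} to get $\lambda_a|_{\Soc(A)}=-\id$ for $a\notin N$, and play the resulting identity $a\ast s=-2s$ against the confinement $a\ast s\in\langle a\ast a\rangle_+$ supplied by Lemmas~\ref{lema:2.5nou} and~\ref{lema:2.6nou}. Where you genuinely depart from the paper is in making the argument uniform: the paper first disposes of the case where the chosen element has infinite additive order by a separate computation with $\lambda_d(2td)$, and in the finite-order case it transfers non-periodicity to $N$ (via $\Soc_2(A)=\langle d\rangle_+ +N$) and applies the clash to an element $u\in N$ with $nu\in\Soc(A)$; you instead observe that $\langle a\ast a\rangle_+$ is finite in either case (by Lemma~\ref{lema:2.3nou} in the infinite-order case, by $k(a\ast a)=a\ast(ka)=0$ in the finite-order one) and that $(\Soc(A),+)$ is itself non-periodic (periodic-by-periodic abelian is periodic, and $\Soc_2(A)/\Soc(A)$ is additively periodic by Proposition~\ref{prop:2.4nou}), so a single $s\in\Soc(A)$ of infinite additive order gives the contradiction $-2s\in\langle a\ast a\rangle_+$ at once. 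This buys a shorter, case-free core at no extra cost. One cosmetic slip: $S=\Ann_A^l(A)$ gives $s\ast x=0$ for $s\in S$ and $x\in A$, not $x\ast s=0$; the fact you actually need, $S\subseteq\Ann_A(S)$, is nevertheless correct and is recorded in the proof of Lemma~\ref{lema:CcapAnn_torsionfree+kxasta+torsionfree}. The final deduction of $\Soc(A)=\zeta(A)$ from $A=\Ann_A(\Soc(A))$ is fine.
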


\begin{proof}
Set $N := \Soc_2(A) \cap \Ann_A(\Soc(A))$ which is a subbrace by Lemma~\ref{lema:CcapAnn_torsionfree+kxasta+torsionfree}, and therefore an ideal of $A$. Suppose that $B\neq \Soc_2(A)$. Then, Lemma~\ref{lema:CcapAnn_torsionfree+kxasta+torsionfree} implies that $|\Soc_2(A)/N| = 2$ and $\lambda_u(a)= -a$ for every $u \in \Soc(A)$ and every $d\in \Soc_2(A)\setminus N$. Fix $d\in \Soc_2(A)\setminus N$. Since every left brace of order $2$ is abelian, it follows that $d^2, 2d\in N$. 

Assume that $d$ has infinite additive order. Then, Proposition~\ref{prop:2.4nou} implies that there is a positive integer $t$ such that $t(2d) \in \Soc(A)$. It follows that $\lambda_d(2td) = -2td$. On the other hand,
\[ \lambda_a(2td) = -d + d(2td) = -d + 2td^2 - (2t -1)d  = 2td^2 - 2td.\]
Thus, $0 = 2td^2 = 2t(2d + d\ast d) = 4td + 2(d\ast d)$. But Lemma~\ref{lema:2.3nou} implies that $d \ast d$ has finite additive order. Therefore, $4td$ has finite additive order, a contradiction. Hence, $d$ has finite additive order $m$, and then, 
\[ 0 = d \ast 0 = d \ast (md) = m(d \ast d).\]

Now, since the additive group of $\Soc_2(A)$ is not periodic and $\Soc_2(A) = \langle d \rangle_+ + N$, it follows that the additive group of $B$ is not periodic. Let $u\in N$ having infinite additive order. Then, by Lemma~\ref{lema:2.6nou}, $d \ast u \in \langle d \ast d\rangle_+$, and therefore, $d \ast u$ has finite additive order. On the other hand, by Proposition~\ref{prop:2.4nou}, there is a positive integer $n$ such that $nu \in \Soc(A)$. It follows that $\lambda_d(nu) = -nu$. But then, it holds that
\[ n(d\ast u) = d \ast (nu) = \lambda_d(nu) - nu = -2nu,\]
which implies that $u$ has finite additive order. This is a final contradiction.
\end{proof}

The last key step is to prove that if $\Soc_2(A)$ has non-periodic additive group then $\Soc(A)$ includes an abelian ideal isomorphic to a periodic and locally cyclic group, whose quotient is an abelian brace (see Corollaries~\ref{cor:2.13nou} and~\ref{cor:2.15nou}).

\emph{From now on, we assume that $\Soc_2(A)$ has non-periodic additive group.}

\begin{lema}
\label{lema:2.7nou}
Let $T$ be the periodic part of the additive group of $\Soc(A)$. Then, the quotient $\Soc_2(A)/T$ is abelian.
\end{lema}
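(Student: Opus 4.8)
The plan is to reduce the assertion to a statement about single $\ast$-products and then feed it into the machinery already developed. Since $(\Soc(A),+)$ is abelian, the subbrace $\Soc(A)$ is abelian, so its additive and multiplicative operations coincide; consequently the torsion subgroup $T$ of $(\Soc(A),+)$ is closed under both and is a subbrace of $A$, hence an ideal because $A$ is Dedekind. As $T\subseteq\Soc(A)\subseteq\Soc_2(A)$, the quotient $A/T$ is a Dedekind left brace and $\Soc_2(A)/T$ is a subbrace of it, with $(d+T)\ast(e+T)=(d\ast e)+T$. So it suffices to prove that $d\ast e\in T$ for all $d,e\in\Soc_2(A)$.

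Next I would bring in the two standing hypotheses in force in this part of the paper: $A$ is Dedekind and the additive group of $\Soc_2(A)$ is not periodic. The latter lets me apply Proposition~\ref{prop:2.14nou} to conclude $\Soc_2(A)\le\Ann_A(\Soc(A))$; in particular every element of $\Soc_2(A)$ annihilates $\Soc(A)$, which is exactly what makes Lemmas~\ref{lema:2.5nou} and~\ref{lema:2.6nou} applicable with the second argument taken inside $\Soc_2(A)$. Also, by the very definition of the socle series, $d\ast a\in\Soc(A)$ for every $d\in\Soc_2(A)$ and every $a\in A$.

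The core step is then short. Fix $d\in\Soc_2(A)$. I first check that $d\ast d\in T$: it lies in $\Soc(A)$ by the previous remark, and it has finite additive order --- by Lemma~\ref{lema:2.3nou} if $d$ has infinite additive order, and because $k(d\ast d)=d\ast(kd)=d\ast 0=0$ if $d$ has finite additive order $k$ --- so $\langle d\ast d\rangle_+\le T$. Now take any $e\in\Soc_2(A)$; since $e\in\Ann_A(\Soc(A))$, Lemma~\ref{lema:2.5nou} (when $d$ has infinite additive order) or Lemma~\ref{lema:2.6nou} (when $d$ has finite additive order, using that $\Soc_2(A)$ is non-periodic) gives $d\ast e\in\langle d\ast d\rangle_+\le T$. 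Hence $d\ast e\in T$ for all $d,e\in\Soc_2(A)$, and $\Soc_2(A)/T$ is abelian.

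There is no genuinely delicate step here: the lemma is essentially an assembly of Lemmas~\ref{lema:2.3nou}, \ref{lema:2.5nou}, \ref{lema:2.6nou} and Proposition~\ref{prop:2.14nou}. The only points that need care are the routine verification that $T$ is a subbrace (hence an ideal, by Dedekind-ness) and the observation that the a priori merely additive subgroup $\langle d\ast d\rangle_+$ in fact sits inside $T$, which requires knowing both that $d\ast d$ lies in the abelian ideal $\Soc(A)$ and that it has finite additive order.
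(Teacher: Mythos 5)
Your proposal is correct and follows essentially the same route as the paper: identify $T$ as an ideal, invoke Proposition~\ref{prop:2.14nou} to place $\Soc_2(A)$ inside $\Ann_A(\Soc(A))$, show $d\ast d\in T$ (via Lemma~\ref{lema:2.3nou} in the infinite-order case and the identity $k(d\ast d)=d\ast(kd)=0$ in the finite-order case), and then apply Lemmas~\ref{lema:2.5nou} and~\ref{lema:2.6nou} to conclude $d\ast x,\,x\ast d\in\langle d\ast d\rangle_+\le T$. The only cosmetic difference is that the paper first disposes of the case where $\Soc(A)$ is additively periodic (where $T=\Soc(A)$ and the claim is immediate from the definition of the socle series), whereas your argument handles both cases uniformly.
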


\begin{proof}
If the additive group of $\Soc(A)$ is periodic, then $T = \Soc(A)$,  and therefore, $\Soc_2(A)/T = \Soc_2(A)/\Soc(A)$, which is abelian.

Suppose that the additive group of $\Soc(A)$ is not periodic. Since $\Soc(A)$ is abelian, we have that $T$ is  a subbrace of $A$, and therefore, an ideal of $A$. On the other hand, $\Soc_2(A) \leq \Ann_A(\Soc(A))$ by Proposition~\ref{prop:2.14nou}. 

Let $d\in \Soc_2(A)$ with finite additive order. Then, $d \ast d \in T \leq \Soc(A)$ and by Lemma~\ref{lema:2.6nou}, we have that $d\ast x, x \ast d \in \langle d \ast d \rangle_+ \leq T$ for every $x \in \Soc_2(A)$. 

Suppose now that $d\in \Soc_2(A)$ has infinite additive order. By Lemmas~\ref{lema:dastd-nozero+Dededkind-><d>=<d>+<dastd>} and~\ref{lema:2.3nou}, we have that $\langle d \rangle = \langle d \rangle_+ \cap \langle d \ast d \rangle_+$, where $d \ast d \in \Soc(A)$ has finite additive order. Thus, $\langle d \ast d \rangle_+\leq T$. Now, Lemma~\ref{lema:2.5nou} implies that $d\ast x, x\ast d \in \langle d \ast d \rangle_+ \leq T$ for every $x\in \Soc_2(A)$.
\end{proof}

\begin{lema}
\label{lema:2.9nou}
Assume that the periodic part of the additive group of $\Soc(A)$ is a $p$-subgroup for some prime $p$. If $a, b \in \Soc_2(A)$ satisfy $a \ast a \neq 0 \neq b \ast b$, then either $\langle a \ast a \rangle_+ \leq \langle b \ast b \rangle_+$ or $\langle b \ast b \rangle_+ \leq \langle a \ast a \rangle_+$.
\end{lema}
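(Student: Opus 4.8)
The plan is to argue by contradiction, so suppose $\langle a\ast a\rangle_+$ and $\langle b\ast b\rangle_+$ are incomparable. Put $S:=\Soc(A)$, $u:=a\ast a$, $v:=b\ast b$, and let $T$ be the periodic part of $(S,+)$, a $p$-group by hypothesis. As $a,b\in\Soc_2(A)$ we have $u,v\in S$, and both are torsion (immediate if the producing element has finite additive order, and a consequence of Lemma~\ref{lema:2.3nou} otherwise), so $\langle u\rangle_+$ and $\langle v\rangle_+$ are finite cyclic $p$-groups and incomparability forces $W:=\langle u\rangle_++\langle v\rangle_+$ to have $p$-rank $2$. By Proposition~\ref{prop:2.14nou}, $\Soc_2(A)\leq\Ann_A(S)$, hence $a,b\in\Ann_A(S)$; then Lemmas~\ref{lema:2.5nou} and~\ref{lema:2.6nou}, applied with $(d,x)=(a,b)$ and with $(d,x)=(b,a)$, place $e:=a\ast b$ and $f:=b\ast a$ in $\langle u\rangle_+\cap\langle v\rangle_+$, so in particular $e\in\langle v\rangle_+$ and $f\in\langle u\rangle_+$. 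I will also use the elementary fact that, since $S=\Ann_A^l(A)$ is an ideal and hence $\lambda$-invariant, $(x+s)\ast z=x\ast z$ for all $x,z\in A$ and all $s\in S$; in particular $(x+s)\ast(x+s)=x\ast x$ whenever $x\in\Ann_A(S)$.

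The first step is a normalisation reducing to the case where $a+b$ has infinite additive order. By Proposition~\ref{prop:2.4nou}, $\Soc_2(A)/S$ is periodic, so, since $(\Soc_2(A),+)$ is not periodic, neither is $(S,+)$; thus $S/T$ is a nontrivial torsion-free group, hence infinite. This lets me choose $b_0\in S$ of infinite additive order with $a+b+2b_0$ again of infinite additive order (the exceptional choices of $b_0$ lie in at most one coset of $T$ because $S/T$ is torsion-free). Replacing $a$ by $a+b_0$ and $b$ by $b+b_0$ leaves $u$, $v$, $e$, $f$ unchanged by the fact recorded above, and now $a+b$ has infinite additive order; I assume this henceforth.

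Next, set $c:=ab\in\Soc_2(A)$. A short computation with~\eqref{ast_distesq},~\eqref{ast_prod}, the identity $ab=a+b+e$, and the vanishing $x\ast s=0$ for $x\in\Ann_A(S)$, $s\in S$, yields $a\ast c=u+e$, $b\ast c=v+f$ and $c\ast c=u+v+e+f=:q$; moreover $c=a+b+e$ has infinite additive order. Since $A$ is Dedekind, $\langle c\rangle$ is an ideal. If $q=0$, then $\langle c\rangle=\langle c\rangle_+\cong\mathbb{Z}$ by Lemma~\ref{lema:a·a=0ciclic}, so $a\ast c=u+e$ lies in $\langle c\rangle_+\cong\mathbb{Z}$ and is torsion, forcing $u+e=0$; hence $u=-e\in\langle v\rangle_+$, contradicting incomparability. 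If $q\neq 0$, Lemma~\ref{lema:dastd-nozero+Dededkind-><d>=<d>+<dastd>} gives $\langle c\rangle=\langle c\rangle_++\langle q\rangle_+$ with $\langle q\rangle_+\leq W\leq S$. Let $N'\geq1$ be the additive order of $c+S$ in the (periodic, by Proposition~\ref{prop:2.4nou}) group $\Soc_2(A)/S$; then $\langle c\rangle_+\cap S=\langle N'c\rangle_+$, where $N'c=N'(a+b)+N'e$ and $N'(a+b)\in S$ has infinite additive order. By the modular law $\langle c\rangle\cap S=\langle q\rangle_++\langle N'c\rangle_+$, so any $w\in W\cap\langle c\rangle$ can be written $w=\alpha q+\beta N'c$, whence $\beta N'(a+b)=w-\alpha q-\beta N'e\in W$, a finite group, which forces $\beta=0$ and $w\in\langle q\rangle_+$. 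Applying this to $w=u+e$ and $w=v+f$ (both lie in $W$, and in $\langle c\rangle$ because $\langle c\rangle$ is an ideal and $a\ast c,b\ast c\in\langle c\rangle$) gives $\langle u+e\rangle_+,\langle v+f\rangle_+\leq\langle q\rangle_+$; as $\langle q\rangle_+$ is cyclic, these two subgroups are comparable. Since $u+e\in\langle u\rangle_+$ and $v+f\in\langle v\rangle_+$, comparability yields, say, $u+e\in\langle v\rangle_+$, and then $u=(u+e)-e\in\langle v\rangle_+$ because $e\in\langle v\rangle_+$, contradicting incomparability; the symmetric case gives $v\in\langle u\rangle_+$, again impossible. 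This proves that $\langle u\rangle_+$ and $\langle v\rangle_+$ are comparable.

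I expect the routine $\ast$-calculations to be unproblematic; the delicate point — and the reason for the normalisation in the second paragraph — is the additive-order bookkeeping around $\langle c\rangle\cap S$, namely ruling out any contribution of the infinite cyclic part of $\langle c\rangle$ to the finite group $W$. This works precisely because, once $a+b$ has infinite additive order, $N'(a+b)$ has infinite additive order while $W$ is finite; arranging that normalisation cleanly (using that $S/T$ is infinite) together with keeping careful track of which $\ast$-products fall inside $\langle u\rangle_+\cap\langle v\rangle_+$ is the main thing to get right.
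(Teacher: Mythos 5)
Your proof is correct, but it takes a genuinely different route from the paper's. Both arguments start from the same inputs --- Proposition~\ref{prop:2.14nou} places $a,b\in\Ann_A(\Soc(A))$, and Lemmas~\ref{lema:2.5nou} and~\ref{lema:2.6nou} then put $a\ast b$ and $b\ast a$ inside $\langle a\ast a\rangle_+\cap\langle b\ast b\rangle_+$ --- but they diverge immediately afterwards. The paper first proves that the intersection $U=\langle u\rangle_+\cap\langle v\rangle_+$ cannot be trivial: if it were, then $a\ast b=b\ast a=0$, so $ab=a+b$, a short computation gives $(a+b)\ast(a+b)=u+v$, and applying Lemmas~\ref{lema:2.5nou}/\ref{lema:2.6nou} to the element $a+b$ yields $u=a\ast(a+b)\in\langle u+v\rangle_+$, which is impossible for nonzero $p$-elements generating subgroups that meet trivially. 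It then passes to the quotient $A/U$, where the hypotheses persist, and applies this special case there to force $U$ to equal one of $\langle u\rangle_+$, $\langle v\rangle_+$. You instead stay inside $A$: you normalise $a$ and $b$ by a socle element of infinite additive order (legitimate, since star products are unchanged under translation by $\Soc(A)$ once everything lies in $\Ann_A(\Soc(A))$), then dissect the ideal $\langle ab\rangle$ via Lemma~\ref{lema:dastd-nozero+Dededkind-><d>=<d>+<dastd>} and the modular law, showing that the torsion part of $W\cap\langle ab\rangle$ is exactly the cyclic $p$-group $\langle (ab)\ast(ab)\rangle_+$ and that $u+e$ and $v+f$ are trapped there. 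The paper's quotient trick buys brevity; your argument is longer and needs the infinite-order bookkeeping, but it localises the comparability in a single concrete cyclic $p$-group rather than in an induction on quotients. One small point to tighten: at the end you justify the comparability of $\langle u+e\rangle_+$ and $\langle v+f\rangle_+$ by saying $\langle q\rangle_+$ is cyclic, but cyclicity alone does not give a totally ordered subgroup lattice; you need that $\langle q\rangle_+$ is a cyclic \emph{$p$-group}, which does hold here since $q=u+v+e+f$ lies in the $p$-group $T$.
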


\begin{proof}
Set $u := a \ast a$, $v:= b \ast b \in \Soc(A)$. Lemma~\ref{lema:2.7nou} shows that $u$ and $v$ have finite additive orders.

Firstly, we claim that $\langle u \rangle_+ \cap \langle v \rangle_+ \neq \{0\}$. Indeed, suppose that $\{0\} = \langle u \rangle_+ \cap \langle v \rangle_+$. By Lemmas~\ref{lema:2.5nou} and~\ref{lema:2.6nou}, $a\ast b, b\ast a \in \langle u \rangle_+ \cap \langle v \rangle_+ = \{0\}$. In particular, $a+b = ab$. It follows that
\begin{align*}
(a+b) \ast (a+b) & = (ab) \ast (a+b) = (ab)\ast a + (ab) \ast b = \\
& = a \ast (b\ast a) + b\ast a + a \ast a + a \ast (b\ast b) + a \ast b + b \ast b = \\
& = a \ast a + b \ast b = u + v.
\end{align*}
Again, by Lemmas~\ref{lema:2.5nou} and~\ref{lema:2.6nou}, $a \ast (a+b) \in \langle (a+b) \ast(a+b)\rangle_+ = \langle u + v \rangle_+$. But, we see that $a\ast (a+b) = a \ast a + a \ast b = a \ast a = u$. Hence, we arrive to a contradiction, as  $\langle u \rangle_+ \cap \langle v \rangle_+ = \{0\}$ and $\langle u \rangle_+, \langle v \rangle_+$ are $p$-subgroups.

Set $U := \langle u \rangle_+ \cap \langle v \rangle_+$ which is a subbrace of $\Soc(A)$, and therefore an ideal of $A$. Suppose that neither $U \neq \langle u \rangle_+$ nor $U \neq \langle v \rangle_+$. It follows that both quotients $ \langle u \rangle_+/U$ and $\langle v \rangle_+/U$ are non-trivial. Then, we see that
\begin{align*}
& (a+U)\ast (a+U) = a \ast a + U = u + U \neq U, \\
& (b+U) \ast (b+U) = b\ast b + U = v + U \neq U.
\end{align*}
The hypothesis hold also for the quotient $A/U$, and therefore, by the first claim $\langle u +U \rangle_+ \cap \langle v + U \rangle_+$ is non-zero. This is a contradiction with the definition of~$U$.
\end{proof}

\begin{proposicio}
\label{prop:2.12nou}
Suppose that the periodic part of the additive group of $\Soc(A)$ is a $p$-subgroup for some prime $p$. Then, the following hold
\begin{enumerate}
\item $\Soc(A)$ contains an abelian ideal $P$ isomorphic to either a cyclic $p$-group or a Prüfer $p$-group;
\item the quotient $\Soc_2(A)/P$ is an abelian brace.
\end{enumerate}
\end{proposicio}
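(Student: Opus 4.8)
The plan is to produce the ideal $P$ as the union of all the cyclic subgroups $\langle d\ast d\rangle_+$ with $d\in\Soc_2(A)$, and to use Lemma~\ref{lema:2.9nou} to see that this family is a chain, so that $P$ is locally cyclic. Recall first that Proposition~\ref{prop:2.14nou} gives $\Soc_2(A)\leq\Ann_A(\Soc(A))$, so Lemmas~\ref{lema:2.5nou} and~\ref{lema:2.6nou} are available with $x$ ranging over all of $\Soc_2(A)$. Also, for every $d\in\Soc_2(A)$ the element $d\ast d$ lies in $\Soc(A)$ and has finite additive order: this is Lemma~\ref{lema:2.3nou} when $d$ has infinite additive order, and when $d$ has finite additive order $k$ it follows from $k(d\ast d)=d\ast(kd)=0$ using~\eqref{ast_distesq}. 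Since the periodic part $T$ of $(\Soc(A),+)$ is a $p$-group, each $\langle d\ast d\rangle_+$ is a finite cyclic $p$-subgroup of $T$; being a subbrace of the abelian ideal $\Soc(A)$, it is an ideal of $A$.

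Now set $\mathcal F=\{\langle d\ast d\rangle_+ : d\in\Soc_2(A)\}$. By Lemma~\ref{lema:2.9nou}, whenever $a\ast a\neq 0\neq b\ast b$ the subgroups $\langle a\ast a\rangle_+$ and $\langle b\ast b\rangle_+$ are comparable, and the cases in which one of them is zero are trivial; hence $\mathcal F$ is a chain. Let $P:=\bigcup\mathcal F$. As a directed union of subbraces contained in $\Soc(A)$, $P$ is a subbrace of $A$ contained in $\Soc(A)$, hence an abelian ideal of $A$. Every finite subset of $P$ lies in a single member of $\mathcal F$, which is cyclic, so $P$ is a locally cyclic $p$-group, and therefore either a cyclic $p$-group or a Prüfer $p$-group (the degenerate case $P=0$ counting as the trivial cyclic $p$-group). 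This establishes item~1.

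For item~2, take arbitrary $a,b\in\Soc_2(A)$. Applying Lemma~\ref{lema:2.5nou} if $a$ has infinite additive order, or Lemma~\ref{lema:2.6nou} if $a$ has finite additive order (this is where the hypothesis that $\Soc_2(A)$ is not periodic is used), together with $b\in\Soc_2(A)\leq\Ann_A(\Soc(A))$, yields $a\ast b\in\langle a\ast a\rangle_+\subseteq P$. Hence $\Soc_2(A)\ast\Soc_2(A)\subseteq P$, so $\overline x\ast\overline y=\overline 0$ for all $\overline x,\overline y\in\Soc_2(A)/P$, i.e.\ $\Soc_2(A)/P$ is an abelian brace. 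The point that needs real care is verifying that $\mathcal F$ is genuinely totally ordered --- exactly where Lemma~\ref{lema:2.9nou} and the $p$-group hypothesis are essential --- and, more mundanely, recalling the classification of locally cyclic $p$-groups; the rest is bookkeeping.
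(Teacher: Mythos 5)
Your proof is correct, and its core --- using Lemma~\ref{lema:2.9nou} to see that the subgroups $\langle d\ast d\rangle_+$, $d\in\Soc_2(A)$, form a chain whose union is the desired ideal $P$ --- is the same as the paper's. You differ in two places, both in the direction of economy. First, the paper builds $P$ recursively, choosing elements $a_1,a_2,\dots$ with $\langle a_j\ast a_j\rangle_+$ strictly increasing, and must then argue separately (picking $r$ with $p^{k_r}$ exceeding the additive order of a hypothetical $d\ast d\notin P$) that every $d\ast d$ actually lands in the union; taking the union over \emph{all} of $\Soc_2(A)$ at once, as you do, makes that final containment check vacuous. Second, for item~2 the paper only records that $a\ast a\in P$ for every $a\in\Soc_2(A)$ and then applies Theorem~\ref{ateo:2.11nou} to the Dedekind quotient $\Soc_2(A)/P$ (whose additive group is non-periodic because $P$ is periodic); you instead obtain $\Soc_2(A)\ast\Soc_2(A)\subseteq P$ directly from Lemmas~\ref{lema:2.5nou} and~\ref{lema:2.6nou} combined with $\Soc_2(A)\leq\Ann_A(\Soc(A))$ from Proposition~\ref{prop:2.14nou}, bypassing Theorem~\ref{ateo:2.11nou} entirely. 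Both routes are valid; yours is self-contained and exhibits the inclusion $\Soc_2(A)\ast\Soc_2(A)\subseteq P$ explicitly, which is precisely the form in which the conclusion is exploited afterwards in Corollary~\ref{cor:2.13nou}.
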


\begin{proof}
If $a \ast a = 0$ for every element $a \in \Soc_2(A)$, then Theorem~\ref{ateo:2.11nou} shows that $\Soc_2(A)$ is an abelian subbrace and the result holds. Therefore, suppose that $\Soc_2(A)$ has an element $a_1$ such that $0 \neq c_1:= a_1 \ast a_1 \in \Soc(A)$. By Lemma~\ref{lema:2.3nou}, there exists some positive integer $k_1$ such that $p^{k_1}c_1 = 0$. Moreover, $\langle c_1\rangle_+$ is an ideal of $A$, since $\langle c_1\rangle_+$ is a subbrace of $\Soc(A)$.

If $a \ast a \in \langle c_1\rangle_+$ for every element $a \in \Soc_2(A)$, then Theorem~\ref{ateo:2.11nou} shows that $\Soc_2(A)/\langle c_1\rangle_+$ is an abelian brace and we are done. Therefore, suppose that $\Soc_2(A)$ has an element $a_2$ such that $c_2:= a_2 \ast a_2  \notin \langle c_1 \rangle_+$. Again,  Lemma~\ref{lema:2.3nou} implies that $c_2$ has finite additive order, so that $p^{k_2}c_2 = 0$, for some positive integer $k_2$. Moreover, $\langle c_2\rangle_+$ is also an ideal of $A$, and  Lemma~\ref{lema:2.9nou} implies that $\langle c_1\rangle_+ \leq \langle c_2\rangle_+$, so that $k_1 < k_2$. 

Repeating recursively the above arguments we obtain that either $\Soc_2(A)$ has an element $a_t$ such that $\langle a_t \ast a_t\rangle_+ \leq \Soc(A)$ is isomorphic to a cyclic $p$-group and the quotient $\Soc_2(A)/ \langle a_t \ast a_t\rangle_+$ is abelian, or $\Soc_2(A)$ contains an infinite subset $\{a_j\mid j\in \mathbb{N}\}$ of elements such that, for every $j\in \mathbb{N}$, $c_j:= a_j\ast a_j \in \Soc(A)$ has finite additive order $p^{k_j}$, for some positive integer $k_j$. In the latter case, for every $j \in \mathbb{N}$, it also holds that $\langle c_j \rangle_+$ is an ideal such that $c_{j+1}\notin \langle c_j\rangle_+$, $\langle c_j \rangle_+ \leq \langle c_{j+1}\rangle_+$, and $p^{k_j} < p^{k_{j+1}}$. Set $P := \bigcup_{j\in \mathbb{N}} \langle c_j\rangle_+ \leq \Soc(A)$. Clearly, $P$ is an abelian ideal isomorphic to a Prüfer $p$-group.

Assume that there exists $d \in \Soc_2(A)$ such that $d \ast d \notin P$. By Lemma~\ref{lema:2.3nou}, $d\ast d$ has finite additive order $p^m$, for some positive integer~$m$. Choose $r \in \mathbb{N}$ such that $m < r$. By Lemma~\ref{lema:2.9nou}, we have that either $\langle d\ast d \rangle_+ \leq \langle c_r \rangle_+$ or $\langle c_r \rangle_+ \leq \langle d \ast d \rangle_+$. Since $d \ast d \notin P$, it follows that $\langle c_r \rangle_+ \leq \langle d \ast d \rangle_+$, which is a contradiction with the choice of $r$.  

Hence, we conclude that $a \ast a \in P$ for every $a \in \Soc_2(A)$. Therefore, Theorem~\ref{ateo:2.11nou} yields that $\Soc_2(A)/P$ is an abelian brace.
\end{proof}

\begin{corolari}
\label{cor:2.13nou}
It follows that $\Soc(A)$ includes an abelian ideal $P$ which is isomorphic to a periodic and locally cyclic group. Furthermore, the quotient $\Soc_2(A)/P$ is an abelian brace.
\end{corolari}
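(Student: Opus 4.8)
The plan is to reduce the general statement to the $p$-group case already handled in Proposition~\ref{prop:2.12nou}, by decomposing the torsion part of $\Soc(A)$ into its primary components and arguing that at most one of them can be nontrivial. First I would let $T$ denote the periodic part of the additive group of $\Soc(A)$, which is a subbrace (hence an ideal) since $\Soc(A)$ is abelian, and consider its primary decomposition $T = \bigoplus_p T_p$ over primes $p$. The key reduction is: \emph{if $p \ne q$ are distinct primes with $T_p \ne 0 \ne T_q$, then $a \ast a$ has order a power of $p$ for every $a \in \Soc_2(A)$ AND order a power of $q$ for every such $a$}, which forces $a \ast a = 0$ for all $a \in \Soc_2(A)$; by Theorem~\ref{ateo:2.11nou}, $\Soc_2(A)$ is then already abelian and we may take $P = 0$ (locally cyclic, trivially), with $\Soc_2(A)/P = \Soc_2(A)$ abelian.

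To run this reduction I would quotient by the "wrong" primary components. Fix an element $d \in \Soc_2(A)$; by Lemma~\ref{lema:2.3nou} (combined with Lemma~\ref{lema:2.7nou}), $c := d \ast d$ lies in $T$ and has finite additive order, so $c$ decomposes as $c = \sum_p c_p$ with $c_p \in T_p$. Passing to the quotient brace $A/\bigl(\bigoplus_{q \ne p} T_q\bigr)$ — which is again Dedekind, has $\Soc_2$ with non-periodic additive group (the free part of $\Soc(A)$ survives, and is nonzero by Proposition~\ref{prop:2.4nou}), and whose socle has $p$-primary periodic part $T_p$ — Proposition~\ref{prop:2.12nou} applies and tells us that the image of $d \ast d$ there lies in a cyclic or Prüfer $p$-subgroup, in particular has $p$-power order; equivalently $c_p$ is the only primary component of $c$ that can be constrained, but since this holds for \emph{every} prime $p$ with $T_p \ne 0$, if there are two such primes then $c$ must be $0$. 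Thus either $T$ is a $p$-group for a single prime $p$ (and Proposition~\ref{prop:2.12nou} gives $P$ directly, a cyclic $p$-group or a Prüfer $p$-group, both locally cyclic), or $a \ast a = 0$ for all $a \in \Soc_2(A)$ and we conclude as above with $P = 0$.

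Finally, I would note that in every case $P$ is an abelian ideal of $A$ contained in $\Soc(A)$ which is isomorphic to a periodic locally cyclic group (the trivial group, a finite cyclic $p$-group, and a Prüfer $p$-group are all periodic and locally cyclic), and $\Soc_2(A)/P$ is an abelian brace — directly from Proposition~\ref{prop:2.12nou} in the single-prime case, and because $\Soc_2(A)$ itself is abelian in the multi-prime case. The main obstacle I anticipate is verifying carefully that the quotient $A/\bigl(\bigoplus_{q\ne p}T_q\bigr)$ genuinely satisfies all the running hypotheses of Proposition~\ref{prop:2.12nou}: one must check that its socle is the image of $\Soc(A)$ with periodic part exactly the image of $T_p$, that the quotient remains Dedekind (inherited), and — crucially — that the non-periodicity of $\Soc_2$ is preserved, which uses that $\bigoplus_{q \ne p} T_q$ is a torsion group so the torsion-free rank of $\Soc(A)$ is unchanged; this last point relies on Proposition~\ref{prop:2.4nou} to guarantee that torsion-free rank is positive to begin with.
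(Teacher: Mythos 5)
There is a genuine gap in your key reduction. You claim that if $T_p \neq 0 \neq T_q$ for two distinct primes $p$ and $q$, then $a \ast a = 0$ for every $a \in \Soc_2(A)$, because $a \ast a$ would have to have both $p$-power and $q$-power additive order. But applying Proposition~\ref{prop:2.12nou} to the quotient $A/R_p$, where $R_p = \bigoplus_{q \neq p} T_q$, only tells you that the \emph{image} of $c := a \ast a$ in $A/R_p$ lies in an ideal $(C_p \oplus R_p)/R_p$ isomorphic to a cyclic or Pr\"ufer $p$-group. Since the periodic part of the socle of $A/R_p$ is already a $p$-group, the observation that this image has $p$-power order is vacuous and gives no information about the order of $c$ itself: writing $c = \sum_q c_q$ in primary components, the application at the prime $p$ constrains only $c_p$ (it must lie in $C_p$), and the constraints obtained at different primes are perfectly compatible with $c \neq 0$, for instance with $c$ of additive order $pq$. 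So your dichotomy ``either $T$ is a $p$-group, or $\Soc_2(A)$ is abelian and $P = 0$ works'' is not justified, and the multi-prime case cannot be dismissed.

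The correct conclusion from your own setup --- and this is what the paper does --- is that $\Soc_2(A) \ast \Soc_2(A) \leq C_p \oplus R_p$ for \emph{every} $p \in \pi(T)$, hence $\Soc_2(A) \ast \Soc_2(A) \leq \bigcap_{p \in \pi(T)} (P + R_p)$, where $P := \bigoplus_{p \in \pi(T)} C_p$; a short primary-component computation shows that this intersection equals $P$. Since each $C_p$ is cyclic or Pr\"ufer and the primes are distinct, $P$ is still periodic and locally cyclic (it embeds in $\mathbb{Q}/\mathbb{Z}$), and $\Soc_2(A)/P$ is abelian precisely because all the star products land in $P$. In short: the assembly of the local pieces $C_p$ into a single locally cyclic $P$, together with the verification that $\bigcap_p (P + R_p) = P$, is the actual content of the corollary, and it is the step your proposal replaces with an invalid order argument.
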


\begin{proof}
Let $T$ be the periodic part of the additive group of $\Soc(A)$. We have that $T = \bigoplus_{p \in \pi(T)} T_p$ where $T_p$ is the $p$-component of $T$, $p \in \pi(T)$. Fix $p\in \pi(T)$ and set $R_p := \bigoplus_{q\neq p} T_q$ so that $T = T_p \oplus R_p$. Since $\Soc(A)$ is abelian, $R_p$ is a subbrace, and therefore, an ideal of $A$. By Proposition~\ref{prop:2.12nou}, $T/R_p$ includes an abelian ideal $B_p/R_p$ which is isomorphic to either a cyclic $p$-group or a Prüfer $p$-group. Moreover, it also holds $\Soc_2(A)/R_p \ast \Soc_2(A)/R_p \leq B_p/R_p$. Therefore, we have that $B_p = C_p \oplus R_p$ where $C_p \leq \Soc(A)$ is an abelian ideal isomorphic to a cyclic $p$-subgroup or a Prüfer $p$-group. 

Now, set $P:= \bigoplus_{p \in \pi(T)} C_p \leq \Soc(A)$. Then, $P$ is an abelian ideal isomorphic to a periodic locally cyclic subgroup and $C_p \oplus R_p = P + R_p$ for every $p \in \pi(T)$. Thus, it also holds that $\Soc_2(A) \ast \Soc_2(A) \leq C_p \oplus R_p = P + R_p$ for every $p\in \pi(T)$. Therefore, $\Soc_2(A) \ast \Soc_2(A) \leq \bigcap_{p \in \pi(T)} P + R_p$. 

Set $U:= \bigcap_{p\in \pi(T)} P + R_p$ and denote by $U_p$ the $p$-component of $U$, $p \in \pi(T)$. Fix $p \in \pi(T)$. Clearly, $C_p \leq U_p$, as $P \leq U$. On the other hand, we also have that $U_p \leq U \leq P + R_p = C_p + R_p$. Thus, $U_p = C_p + (U_p \cap R_p)$. Since $R_p$ does not contain $p$-elements, $U \cap R_p = \{0\}$, and therefore, $U_p = C_p$. Hence, $U = P$ and we have
\[ \Soc_2(A) \ast \Soc_2(A) \leq \bigcap_{p \in \pi(T)} P + R_p = P. \qedhere \]
\end{proof}

\begin{corolari}
\label{cor:2.15nou}
Let $A$ be a Dedekind left brace such that $A = \Soc_2(A)$ and the additive group of $A$ is not periodic. Then, the following statements hold:
\begin{itemize}
\item $\zeta(A) = \Soc(A)$ and the additive group of $A/\zeta(A)$ is periodic.
\item $\zeta(A)$ includes an abelian ideal $P$ isomorphic to a periodic and locally cyclic and $A/P$ is an abelian brace.
\end{itemize}
\end{corolari}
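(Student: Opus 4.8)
The plan is to observe that this corollary is a packaging statement: under the running hypothesis of the subsection it merely collects Proposition~\ref{prop:2.4nou}, Proposition~\ref{prop:2.14nou} and Corollary~\ref{cor:2.13nou}, so the whole proof consists in checking that the hypotheses of those results are met and then translating their conclusions through the identifications $A=\Soc_2(A)$ and (once established) $\Soc(A)=\zeta(A)$. Since $A=\Soc_2(A)$ and the additive group of $A$ is assumed not periodic, the standing assumption ``$\Soc_2(A)$ has non-periodic additive group'' is satisfied, so everything proved from Lemma~\ref{lema:2.7nou} onwards is available.

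For the first bullet I would argue as follows. Proposition~\ref{prop:2.14nou} applies and yields $\Soc_2(A)\leq\Ann_A(\Soc(A))$; since $A=\Soc_2(A)$, the ``in particular'' clause of that proposition gives $\Soc(A)=\zeta(A)$. Consequently $A/\zeta(A)=A/\Soc(A)=\Soc_2(A)/\Soc(A)$, whose additive group is periodic by Proposition~\ref{prop:2.4nou}. This settles $\zeta(A)=\Soc(A)$ together with the periodicity of the additive group of $A/\zeta(A)$.

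For the second bullet I would invoke Corollary~\ref{cor:2.13nou}: $\Soc(A)$ contains an abelian ideal $P$ isomorphic to a periodic, locally cyclic group, and the quotient $\Soc_2(A)/P$ is an abelian brace. Using the identity $\Soc(A)=\zeta(A)$ just obtained, $P$ is an abelian ideal contained in $\zeta(A)$; and since $A=\Soc_2(A)$, the quotient $A/P=\Soc_2(A)/P$ is an abelian brace, as required. There is essentially no obstacle at this stage — the genuine work was done earlier, in Proposition~\ref{prop:2.14nou} (forcing $\Soc_2(A)$ into the annihilator of $\Soc(A)$) and in Corollary~\ref{cor:2.13nou} (extracting the locally cyclic ideal $P$ from the periodic part of $\Soc(A)$); the only point to be careful about is simply to record that the non-periodicity of the additive group of $A$ is exactly what licenses the use of those two results.
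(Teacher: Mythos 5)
Your proposal is correct and follows exactly the paper's own route: the paper's proof is the one-line assertion that the corollary follows from Propositions~\ref{prop:2.14nou} and~\ref{prop:2.4nou} and Corollary~\ref{cor:2.13nou}, and you have simply spelled out the same deductions (using $A=\Soc_2(A)$ to identify $\zeta(A)$ with $\Soc(A)$ via Proposition~\ref{prop:2.14nou}, then citing Proposition~\ref{prop:2.4nou} for periodicity and Corollary~\ref{cor:2.13nou} for the ideal $P$). No discrepancy to report.
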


\begin{proof}
It follows from Propositions~\ref{prop:2.14nou} and~\ref{prop:2.4nou}, and Corollary~\ref{cor:2.13nou}.
\end{proof}

\begin{proof}[Proof of Theorem~\ref{ateo:2.17nou}]
By Corollary~\ref{cor:2.15nou}, $\zeta(A)$ includes an abelian ideal $P$ isomorphic to a periodic and locally cyclic group, and such that $A \ast A \leq P$. Choose in $\zeta(A)$ a subbrace $B$ which is maximal under the property $B \cap P  = \{0\}$. Suppose that the additive group of the quotient $\zeta(A)/B$ is not periodic, i.e. there exists $u +B \in \zeta(A)/B$ with infinite additive order. Set $U/B:= \langle u + B \rangle_+$, where $U$ is a subbrace of $\zeta(A)$. Since the additive group of $(P + B)/B \cong P$ is periodic, the intersection $U/B \cap (P + B)/B$ is zero. It follows that $U \cap P = \{0\}$, and we obtain a contradiction. This contradiction shows that the additive group of $\zeta(A)/B$ is periodic. Moreover, the additive group of $A/\zeta(A)$ is also periodic by Corollary~\ref{cor:2.15nou}. Therefore, the additive group of $T := A/B$ is periodic. 

Now, since $P \leq \zeta(A)$, we have that $(P + B)/B \leq \zeta(A/B)$. On the other hand, since $A \ast A\leq  P$ and $T \ast T = (A/B) \ast (A/B) = (A \ast A + B)/B$, it follows that $T \ast T \leq (P+B)/B$. Therefore, $T \ast T \leq \zeta(T)$ and the additive group of $T \ast T$ is periodic and locally cyclic. 

Finally, take $D := A/P$. Since $B \cap P = \{0\}$, it follows that $A$ is isomorphic to some subbrace of the direct sum $A/B \oplus A/P$, which proves the theorem.
\end{proof}

\section*{Acknowledgements}
The first, second and fourth authors are supported by the Ministerio de Ciencia, Innovación y Universidades, Agencia Estatal de Investigación, FEDER (grant: PID2024-159495NB-I00), and the Conselleria d'Educació, Universitats i Ocupació, Generalitat Valenciana (grant: \mbox{CIAICO/2023/007}). The third author is very grateful to the Conselleria d'Innovaci\'o, Universitats, Ci\`encia i
Societat Digital of the Generalitat (Valencian Community, Spain) and the Universitat de
Val\`encia for their financial support to host researchers affected by the war in
Ukraine in research centres of the Valencian Community.


\begin{thebibliography}{99}

\bibitem{BachillerCedoJespersOkninski19}
D.~Bachiller, F.~Ced{\'o}, E.~Jespers, and J.~Okni{\'n}ski.
\newblock Asymmetric product of left braces and simplicity; new solutions of
  the {Y}ang-{B}axter equation.
\newblock {\em Commun. Contemp. Math.}, 21(8):1850042, 2019.

\bibitem{BallesterEstebanFerraraPerezCTrombetti-arXiv-cent-nilp}
A.~Ballester-Bolinches, R.~Esteban-Romero, M.~Ferrara, V.~P{\'e}rez-Calabuig,
  and M.~Trombetti.
\newblock Central nilpotency of left skew braces and solutions of the
  {Y}ang-{B}axter equation.
\newblock {\em Pac. J. Math.}, 335(1):1--32, 2025.

\bibitem{BallesterEstebanJimenezPerezC24-solubleskewbraces}
A.~Ballester-Bolinches, R.~Esteban-Romero, P.~Jim{\'e}nez-Seral, and
  V.~P{\'e}rez-Calabuig.
\newblock Soluble skew left braces and soluble solutions of the {Y}ang-{B}axter
  equations.
\newblock {\em Adv. Math.}, 455:Paper No. 109880, 27 pp., 2024.

\bibitem{BallesterEstebanKurdachenkoPerezC-dedekind-leftbraces}
A.~Ballester-Bolinches, R.~Esteban-Romero, L.~A. Kurdachenko, and
  V.~Pérez-Calabuig.
\newblock On left braces in which every subbrace is an ideal.
\newblock {\em Results Math.}, 80(1):1--21, 2025.


\bibitem{BallesterFerraraPerezCTrombetti23-arXiv-note}
A.~Ballester-Bolinches, M.~Ferrara, V.~Pérez-Calabuig, and M.~Trombetti.
\newblock A note on right-nil and strong-nil skew braces.
\newblock {\em Proc. R. Soc. Edinb., Sect. A, Math.}, in press, 2024.

\bibitem{BonattoJedlicka23}
M.~Bonatto and P.~Jedli\v{c}ka.
\newblock Central nilpotency of skew braces.
\newblock {\em J. Algebra Appl.}, 22(12):Paper No. 2350255, 16 pp., 2023.

\bibitem{CatinoColazzoStefanelli19}
F.~Catino, I.~Colazzo, and P.~Stefanelli.
\newblock Skew left braces with non-trivial annihilator.
\newblock {\em J. Algebra Appl.}, 18(2):1950033, 23 pp., 2019.

\bibitem{CedoJespersKubatVanAntwerpenVerwimp23}
F.~Ced{\'o}, E.~Jespers, {\L}.~Kubat, A.~Van Antwerpen, and C.~Verwimp.
\newblock On various types of nilpotency of the structure monoid and group of a
  set-theoretic solution of the {Y}ang-{B}axter equation.
\newblock {\em J.\ Pure Appl.\ Algebra}, 227(2):107194, 2023.

\bibitem{CedoJespersOkninski10}
F.~Ced{\'o}, E.~Jespers, and J.~Okni{\'n}ski.
\newblock Retractability of set theoretic solutions of the {Y}ang-{B}axter
  equation.
\newblock {\em Adv. Math.}, 224(6):2472--2484, 2010.

\bibitem{CedoJespersOkninski14}
F.~Ced{\'o}, E.~Jespers, and J.~Okni{\'n}ski.
\newblock Braces and the {Y}ang-{B}axter equation.
\newblock {\em Commun. Math. Phys.}, 327:101--116, 2014.

\bibitem{CedoOkninski23}
F.~Ced{\'o} and J.~Okni{\'n}ski.
\newblock Indecomposable solutions of the {Y}ang-{B}axter equation of
  square-free cardinality.
\newblock {\em Adv. Math.}, 430:109221, 26 p., 2023.

\bibitem{CedoSmoktunowiczVendramin19}
F.~Ced{\'o}, A.~Smoktunowicz, and L.~Vendramin.
\newblock Skew left braces of nilpotent type.
\newblock {\em Proc. London Math. Soc.}, 118(6):1367--1392, 2019.

\bibitem{EtingofSchedlerSoloviev99}
P.~Etingof, T.~Schedler, and A.~Soloviev.
\newblock Set theoretical solutions to the quantum {Y}ang-{B}axter equation.
\newblock {\em Duke Math. J.}, 100:169--209, 1999.

\bibitem{Gateva-Ivanova18-advmath}
T.~Gateva-Ivanova.
\newblock Set-theoretic solutions of the {Y}ang-{B}axter equation, braces and
  symmetric groups.
\newblock {\em Adv. Math.}, 338:649--701, 2018.

\bibitem{Gateva-IvanovaCameron12}
T.~Gateva-Ivanova and P.~Cameron.
\newblock Multipermutation solutions of the {Y}ang-{B}axter equation.
\newblock {\em Comm.\ Math.\ Phys.}, 309(3):583--621, 2012.

\bibitem{JedlickaPilitowska23}
P.~Jedli\v{c}ka and A.~Pilitowska.
\newblock Indecomposable involutive solutions of the {Y}ang-{B}axter equation
  of multipermutation level 2 with non-abelian permutation group.
\newblock {\em J.\ Combin. Theory Ser. A}, 197:Paper No. 105753, 35 pp., 2023.

\bibitem{JespersVanAntwerpenVendramin23}
E.~Jespers, A.~Van~Antwerpen, and L.~Vendramin.
\newblock Nilpotency of skew braces and multipermutation solutions of the
  {Y}ang–{B}axter equation.
\newblock {\em Commun. Contemp. Math.}, 25(09):Paper No. 2250064, 20 pp, 2023.

\bibitem{Rump07}
W.~Rump.
\newblock Braces, radical rings, and the quantum {Y}ang-{B}axter equation.
\newblock {\em J. Algebra}, 307:153--170, 2007.

\bibitem{Rump07-jpaa}
W.~Rump.
\newblock Classification of cyclic braces.
\newblock {\em J.\ Pure Appl.\ Algebra}, 209:671--685, 2007.

\bibitem{Schmidt94}
R.~Schmidt.
\newblock {\em Subgroup lattices of groups}, volume~14 of {\em De Gruyter
  Expositions in Mathematics}.
\newblock Walter de Gruyter, Berlin, 1994.

\bibitem{Smoktunowicz18-jalg}
A.~Smoktunowicz.
\newblock A note on set-theoretic solutions of the {Y}ang-{B}axter equation.
\newblock {\em J.\ Algebra}, 500:3--18, 2018.

\bibitem{Smoktunowicz18-tams}
A.~Smoktunowicz.
\newblock On {E}ngel groups, nilpotent groups, rings, braces and the
  {Y}ang-{B}axter equation.
\newblock {\em Trans.\ Amer.\ Math.\ Soc.}, 370(9):6535--6564, 2018.

\end{thebibliography}
\end{document}